\newtheorem*{mainthm}{Acyclicity Theorem}
\newtheorem{thm}{Theorem}[subsection]
\newtheorem{lem}[thm]{Lemma}
\newtheorem{prop}[thm]{Proposition}
\newtheorem{cor}[thm]{Corollary}
\theoremstyle{definition}
\newtheorem{defn}[thm]{Definition}
\theoremstyle{remark}
\newtheorem{rmk}[thm]{Remark}
\newtheorem{ex}[thm]{Example}
\newtheorem{exs}[thm]{Examples}
\newtheorem{notn}[thm]{Notation}
\newtheorem{terminology}[thm]{Terminology}
\newtheorem{conv}[thm]{Convention}
\let\c@equation\c@thm
\numberwithin{equation}{subsection}
\DeclareMathAlphabet{\mathbbe}{U}{bbold}{m}{n}
\newcommand\Op{\mathcal}
\newcommand{\ob}{\operatorname{Ob}}
\newcommand{\mor}{\operatorname{Mor}}
\newcommand{\id}{\mathrm{id}}
\newcommand{\Ho}{\operatorname{Ho}}
\newcommand\op{\mathrm{op}}
\newcommand\coker{\operatorname{coker}}
\newcommand\lan{\operatorname{Lan}}
\renewcommand\hom{\operatorname{Hom}}
\newcommand{\colim}{\operatorname{colim}}
\newcommand{\Iso}{\mathrm{Iso}}
\renewcommand\Bar{\operatorname{Bar}}
\newcommand{\Cyl}{\operatorname{Cyl}}
\newcommand{\lra}{\longrightarrow}
\newcommand \ve{\varepsilon}
\newcommand\Om {\Omega}
\newcommand{\si}{s^{-1}}
\newcommand{\del}{\partial}
\newcommand\cat[1]{\mathsf{#1}}
\newcommand{\Cat}{\mathsf{Cat}\,}
\newcommand{\Set}{\mathsf{Set}}
\newcommand{\Ch}{\mathsf{Ch}}
\newcommand{\sSet}{\mathsf{sSet}}
\newcommand{\Mod}{\mathsf{Mod}}
\newcommand{\CAT}{\mathsf{CAT}\,}
\newcommand{\Alg}{\mathsf{Alg}}
\newcommand{\Coring}{\mathsf{Coring}}
\newcommand{\Coalg}{\mathsf{Coalg}}
\newcommand{\Bialg}{\mathsf{Bialg}}
\newcommand{\SpS}{\mathsf{Sp}^{\Sigma}}
\newcommand{\A}{\mathsf A}
\newcommand{\B}{\mathsf B}
\newcommand{\C}{\mathsf C}
\newcommand{\D}{\mathsf D}
\newcommand\R{\mathsf R}
\newcommand\V{\mathsf V}
\newcommand{\K}{\mathsf K}
\newcommand{\M}{\mathsf M}
\newcommand{\N}{\mathsf N}
\renewcommand\P{\mathsf P}
\renewcommand\R{\mathsf R}
\newcommand\cL{\mathcal L}
\newcommand\X{\mathcal X}
\newcommand\I{\mathcal I}
\newcommand\J{\mathcal J}
\newcommand\cR{\mathcal R}
\newcommand{\cP}{\mathcal{P}}
\newcommand{\cQ}{\mathcal{Q}}
\newcommand\TT{\mathbb T}
\newcommand\KK{\mathbb K}
\newcommand\CC{\mathbb{C}}
\newcommand\FF{\mathbb{F}}
\newcommand{\WE}{\mathcal{W}}
\newcommand{\Fib}{\mathcal{F}}
\newcommand{\Cof}{\mathcal{C}}
\newcommand\uI {\mathbbe{1}}
\newcommand\adjunct[4]{\xymatrix@C=3pc@R=4pc{#1\ar @<1.25ex>[rr]^{#3}&\perp&#2\ar @<1.25ex>[ll]^{#4}}}
\newcommand{\pbtimes}[1]{\mathbin{\mathop{\times}\limits_{#1}}}
\renewcommand{\2}{\mathbf{2}} 
\newcommand{\3}{\mathbf{3}}
\newcommand{\LL}{\mathbb{L}}
\newcommand{\RR}{\mathbb{R}}
\newcommand{\UU}{\mathbb{U}}
\renewcommand{\AA}{\mathbb{A}}
\newcommand{\rlp}[1]{{#1}^\boxslash}
\newcommand{\llp}[1]{{}^\boxslash\!{#1}}
\DeclareMathOperator{\sm}{\wedge}
\newcommand{\coalg}[1]{\mathsf{Coalg}_{#1}}
\newcommand{\alg}[1]{\mathsf{Alg}^{#1}}
\newcommand{\CCoalg}[1]{\mathbb{C}\mathsf{oalg}_{#1}}
\newcommand{\AAlg}[1]{\mathbb{A}\mathsf{lg}^{#1}}
\newcommand{\Sq}[1]{\mathbb{S}\mathsf{q}(#1)}
\begin{document}
\title{A necessary and sufficient condition for induced model structures}
\author [Hess]{Kathryn Hess}
\author[K\c{e}dziorek]{Magdalena K\c{e}dziorek}
\author[Riehl]{Emily Riehl}
\author[Shipley]{Brooke Shipley}

\address{UPHESS BMI FSV \\
    \'Ecole Polytechnique F\'ed\'erale de Lausanne \\
    CH-1015 Lausanne \\
    Switzerland}
    \email{kathryn.hess@epfl.ch}

\address{UPHESS BMI FSV \\
    \'Ecole Polytechnique F\'ed\'erale de Lausanne \\
    CH-1015 Lausanne \\
    Switzerland}
\email{magdalena.kedziorek@epfl.ch}

\address{Department of Mathematics\\Johns Hopkins University\\3400 N.~Charles Street\\Baltimore, MD 21218\\ USA}
\email{eriehl@math.jhu.edu}

\address{Department of Mathematics, Statistics, and Computer Science, University of Illinois at
Chicago, 508 SEO m/c 249,
851 S. Morgan Street,
Chicago, IL, 60607-7045, USA}
    \email{bshipley@math.uic.edu}

\date {\today }

 \keywords {Model category, bialgebra, Reedy model structure.} 
 \subjclass [2010] {Primary: { 18G55, 55U35; Secondary: 18G35}}

 \begin{abstract} 
{A common technique for producing a new model category structure is to lift the fibrations and weak equivalences of an existing model structure along a right adjoint. Formally dual but technically much harder is to lift the cofibrations and weak equivalences along a left adjoint. For either technique to define a valid model category, there is a well-known necessary ``acyclicity'' condition. We show that for a broad class of \emph{accessible model structures} --- a generalization introduced here of the well-known \emph{combinatorial model structures}  --- this necessary condition is also sufficient in both the right-induced and left-induced contexts, and the resulting model category is again accessible. We develop new and old techniques for proving the acyclity condition and apply these observations to construct several new model structures, in particular on categories of differential graded bialgebras, of differential graded comodule algebras, and of comodules over corings in both the differential graded and the spectral setting. We observe moreover that (generalized) Reedy model category structures can also be understood as model categories of ``bialgebras'' in the sense considered here.}
 \end{abstract}
\maketitle
\tableofcontents

\section{Introduction}

A \emph{model category} or, more precisely,  a \emph{model structure} on a category is an abstract framework for homotopy theory. A model category  $(\M, \Fib, \Cof, \WE)$ consists of a bicomplete category $\M$ and a class $\WE$ of \emph{weak equivalences} satisfying the 2-of-3 property, together with a pair of weak factorization systems $(\Cof \cap \WE, \Fib)$ and $(\Cof, \Fib \cap \WE)$ formed by the \emph{acyclic cofibrations} and \emph{fibrations} and by the \emph{cofibrations} and \emph{acyclic cofibrations}, respectively. These axioms allow one to define an abstract notion of homotopy equivalence between objects of $\M$, coinciding with the given weak equivalences on a certain subcategory of objects, and to understand when ordinary limit and colimit constructions are homotopy limits and colimits, i.e., are weak-equivalence invariant.

Given a bicomplete category $\C$ and a pair of adjoint functors
$$\adjunct{\M}{\C}{L}{R}$$ 
there are well known conditions under which there is a model structure on $\C$, which we call the \emph{right-induced model structure}, with $R^{-1}\WE$, $R^{-1}\Fib$ as weak equivalences and fibrations, respectively. The first conditions are set-theoretic and guarantee that the required factorizations can be constructed on $\C$. The final ``acyclicity condition'' implies that the lifted fibrations and weak equivalences are compatible with the class of cofibrations they determine; see \cite[Theorems 11.3.1 and 11.3.2]{hirschhorn}.

The dual case, where we are given a pair of adjoint functors $$\adjunct{\C}{\M}{L}{R}$$ and desire a \emph{left-induced model structure} on $\C$, with $L^{-1}\WE$, $L^{-1}\Cof$ as weak equivalences and cofibrations, is technically much more difficult. The ``acyclicity condition,'' guaranteeing the compatibility of the lifted cofibrations and weak equivalences with fibrations they determine, is formally dual, but the set-theoretic issues are much more complicated. A recent breakthrough result of Makkai-Rosicky \cite{makkai-rosicky}, applied in this context in  \cite{six-author}, describes how the set-theoretic obstacles can be overcome.

As formulated in \cite{six-author}, a common hypothesis handles the set-theoretic issues in both of the situations above and guarantees that the necessary factorizations can always be constructed. Specifically, it suffices to assume that the model structure on $\M$ is cofibrantly generated and that both categories $\M$ and $\C$ are locally presentable; a model structure is \emph{combinatorial} just when it is cofibrantly generated and its underlying category is locally presentable. Locally presentable categories ``permit the small object argument'' for any set of maps.

In this paper, we retain the hypothesis that all categories under consideration are locally presentable but relax the hypothesis that model structures are cofibrantly generated. Our primary motivation is to obtain a general theory that includes the Hurewicz model structure on $\Ch_R$, the category of unbounded chain complexes of modules over a commutative ring $R$, in which the weak equivalences are chain homotopy equivalences. Christensen and Hovey show that in the case where $R=\mathbb{Z}$, the Hurewicz model structure is not cofibrantly generated \cite{christensen-hovey}; thus this model category is not combinatorial. It is, however, cofibrantly generated in a more general enriched sense, which the authors of \cite{barthel-may-riehl} use to right-induce model structures that are again enriched cofibrantly generated. However, the techniques in \cite{barthel-may-riehl}, making use of an enriched version of Quillen's small object argument, {cannot be applied to left-induce the enriched cofibrantly generated model structures under consideration there and thus do not suffice for our present purposes.}

In this paper, we introduce \emph{accessible model structures}, a generalization of the familiar combinatorial model structures on locally presentable categories for which left- and right-induction of model-theoretic functorial factorizations is nonetheless always possible. Thus, left- or right-induced model structures from accessible model structures exist if and only if a simple ``acyclicity'' condition is satisfied, as we describe in Section \ref{sec:acyclicity}. All combinatorial model structures are accessible. Theorem \ref{thm:acc-awfs} demonstrates that the enriched cofibrantly generated model structures under consideration in \cite{barthel-may-riehl} are also accessible. In particular, this includes the Hurewicz model structure on $\Ch_R$ for any commutative ring $R$. Left- and right-induced model structures of accessible model structures are again accessible. Thus the processes of left- and right-induction can be iterated as needed.

This state of affairs is enabled by some rather sophisticated categorical machinery due to Bourke and Garner \cite{bourke-garner}, the details of which are described in Section \ref{sec:induced_awfs}, but whose outline we present now. Grandis and Tholen \cite{grandis-tholen} introduced what are now called \emph{algebraic weak factorization systems}, a more structured variant of the weak factorization systems used to define a model category that have superior categorical properties. Loosely, an algebraic weak factorization system is an ordinary weak factorization system together with a well-behaved functorial factorization; an algebraic weak factorization system is \emph{accessible} if its functorial factorization preserves (sufficiently large) filtered colimits. Garner \cite{garner-understanding} introduced an improved version of Quillen's small object argument that produces an accessible algebraic weak factorization system from any set of generating arrows. In particular, any combinatorial model category can be equipped with a pair of accessible algebraic weak factorization systems. An \emph{accessible model structure} is a model structure on a locally presentable category equipped with a pair of accessible algebraic weak factorization systems. For this broad class of model structures, we have the following powerful existence result for left- and right-induction, which is the most general result of this kind known to date.  The existence criteria are expressed in terms of distinguished classes of morphisms defined as follows.

\begin{notn} Let $f$ and $g$ be morphisms in a category $\C$. If for every commutative diagram in $\C$
$$\xymatrix{ \cdot \ar[d]_f \ar[r]^{{a}} & \cdot \ar[d]^{g} \\ \cdot \ar[r]_{{b}} \ar@{-->}[ur]_{c} & \cdot}$$
the dotted lift $c$ exists, i.e., $gc=b$ and $cf=a$, then we write $f\boxslash g$.

If $\X$ is a class of morphisms in a category $\C$, then
$$\llp\,{\X}=\{ f \in \mor C\mid f\boxslash x\quad \forall x\in \X\},$$
and
$$\rlp{\X}=\{ f \in \mor C\mid x\boxslash f\quad \forall x\in \X\}.$$
\end{notn}

\begin{mainthm}[Corollary \ref{cor:acyclicity-reduction}]
Suppose $(\M, \Fib, \Cof,\WE)$ is an accessible model category, $\C$ and $\K$ are locally presentable, and there exist adjunctions 
\[ \xymatrix@C=4pc@R=4pc{ \K \ar@<1ex>[r]^V \ar@{}[r]|\perp & \M \ar@<1ex>[l]^R \ar@<1ex>[r]^L\ar@{}[r]|\perp & \C. \ar@<1ex>[l]^U}\]  
\begin{enumerate}
\item The right-induced model structure exists on $\C$ if and only if \[\llp\,{U^{-1}\Fib} \subset U^{-1}\WE.\]
\item The left-induced model structure exists on $\K$ if and only if \[\rlp{(V^{-1}\Cof)} \subset V^{-1}\WE.\]
\end{enumerate}
{When these conditions are satisfied, the induced model structures on $\C$ and on $\K$ are again accessible.}
\end{mainthm}

In an accessible model category, the majority of the components of the underlying model structure have been ``described algebraically'' and thus become easy to transport.\footnote{There is a 2-monad on the 2-category of locally presentable categories and accessible functors whose algebras are locally presentable categories equipped with a pair of accessible algebraic weak factorization systems with a comparison map between them.} By \cite{BKP}, the 2-category of such structures admits certain 2-dimensional limits, which are used to facilitate left- and right-induction. The pair of nested functorial factorizations encode classes of ``acyclic cofibrations,'' ``cofibrations,'' ``acyclic fibrations,'' and ``fibrations.'' The remaining non-algebraic datum is the compatibility of the classes of acyclic cofibrations and acyclic fibrations with the weak equivalences; this is the \emph{acyclicity condition} {appearing in (1) and (2) above.}

The acyclicity condition is generally not easy  to check, but we provide several methods to do so in Section \ref{subsection:acyclicity}.  
We apply these methods to produce new model structures on  categories of {comodules over corings in both spectral and differential graded frameworks;} differential graded associative bialgebras and $H$-comodule algebras; and  Reedy categories.

This paper is structured as follows. In Section \ref{sec:acyclicity} we describe various methods of checking the acyclicity conditions, enabling us to apply the Acyclicity Theorem to produce the new model structures mentioned above. The proof of the Acyclicity Theorem is given in Section \ref{sec:induced_awfs}, where we  also outline the relevant work of Bourke and Garner \cite{bourke-garner} upon which our result is based. {To illustrate the use of this theorem, we prove that the category $\D$-shaped diagrams valued in any accessible model category admits both projective and injective model structures, generalizing the corresponding result for combinatorial model categories.} In Section \ref{sec:enriched-cof-gen}, we prove that a large class of enriched cofibrantly generated model categories, including in particular the Hurewicz model structure on $\Ch_R$, are accessible model categories, to which the Acyclicity Theorem applies. The rest of the paper presents numerous examples, starting with the spectral examples in Section \ref{sec:spectra_ex} and followed by DG examples in Section \ref{section:dg_examples}. {With one small exception,} model structures defined in these sections are new. In the last section we explain how to understand (generalized) Reedy model category structure using left and right induction and Theorem \ref{thm:square}. In the addendum we observe that left induction is more common than one may think, occuring even in the well-known case of the adjunction between simplicial sets and topological spaces (with either of two familiar model structures).

\begin{notn}
Throughout this article, weak equivalences are denoted $\xrightarrow \sim$, while cofibrations are denoted $\rightarrowtail$.
\end{notn}

{\begin{conv} Unless stated explicitly otherwise, all monoidal categories in this article are assumed to be symmetric.  Moreover, when we refer to a \emph{monoidal model category}, we mean a \emph{closed, symmetric monoidal model category}, in the sense of \cite{schwede-shipley}.
\end{conv}}

 \subsection{Acknowledgments} Our collaboration began at the first \emph{Women in Topology} workshop, which was held in 2013 at the Banff International Research Station (BIRS), sponsored by both BIRS and the Clay Mathematics Foundation, to whom we express our gratitude for their support.  
 
 Part of this article is also based upon work supported by the National Science Foundation under Grant No.~0932078000 while the authors were in residence at the Mathematical Sciences Research Institute in Berkeley, California, during the Spring 2014 semester.  We would also like to thank the University of Illinois at Chicago and the EPFL for their hospitality during research visits enabling us to complete the research presented in this article. The third author was supported by NSF grants DMS-1103790 and DMS-1509016 and the fourth author by NSF grants DMS-1104396 and DMS-1406468.  

{The authors thank Gabriel Drummond-Cole for pointing out an error in the original formulation of Proposition 6.2.1 and the referee for a careful  and helpful report.}

\section{Accessible model categories, acyclicity, and induced model structures}\label{sec:acyclicity}

 {In this section, we review the acyclicity condition and explain why it suffices to guarantee the compatibility of the cofibrations, fibrations, and weak equivalences in an induced model structure. We then present several techniques for proving the acyclicity condition, which we apply throughout the second half of this paper.}

\subsection{Induced model structures and acyclicity}

\begin{defn} A \emph{weak factorization system} on a category $\C$ consists of a pair $(\cL,\cR)$ of classes of maps so that the following conditions hold.
\begin{itemize}
\item Any morphism in $\C$ can be factored as a morphism in $\cL$ followed by a morphism in $\cR$.
\item $\cL = \llp{\cR}$ and $\cR = \rlp{\cL}$, i.e., any commutative square
\[ \xymatrix{ \bullet \ar[r] \ar[d]_{\cL \ni \ell} & \bullet \ar[d]^{r \in \cR} \\ \bullet \ar[r] \ar@{-->}[ur] & \bullet}\] whose left-hand vertical morphism lies in $\cL$ and whose right-hand vertical morphism lies in $\cR$ admits a lift, and moreover $\cL$ contains every morphism with the left lifting property with respect to each $r \in \cR$, and $\cR$ contains every morphism with the right lifting property with respect to each $\ell \in \cL$.
\end{itemize}
\end{defn}

For example, if $(M,\Fib,\Cof,\WE)$ is a model category, then $(\Cof \cap \WE, \Fib)$ and $(\Cof, \Fib \cap \WE)$ are weak factorization systems. The converse also holds, under one additional condition.

\begin{prop}[{\cite[7.8]{joyal-tierney}}]\label{prop:model-via-wfs} If $\M$ is a bicomplete category, and $\Fib, \Cof, \WE$ are classes of morphisms so that 
\begin{itemize}
\item $\WE$ satisfies the 2-of-3 property, and 
\item $(\Cof \cap \WE, \Fib)$ and $(\Cof, \Fib \cap \WE)$ are weak factorization systems,
\end{itemize}
then $(\M,\Fib,\Cof,\WE)$ defines a model category.
\end{prop}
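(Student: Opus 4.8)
The plan is to check the axioms for a model category in the form stated in the introduction: that $\WE$ has the 2-of-3 property (assumed) and that $(\Cof \cap \WE, \Fib)$ and $(\Cof, \Fib \cap \WE)$ are weak factorization systems (also assumed). So at the level of the definition used in this paper the hypotheses are essentially the conclusion; the genuinely substantive point, which is what Joyal and Tierney isolate, is that these hypotheses already force $\WE$ to be closed under retracts, so that one recovers a model structure in the classical sense. First I would record what the two weak factorization systems supply for free. From $(\Cof \cap \WE, \Fib)$ we obtain $\Cof \cap \WE = \llp\Fib$, $\Fib = \rlp{(\Cof \cap \WE)}$, and factorizations as an acyclic cofibration followed by a fibration; from $(\Cof, \Fib \cap \WE)$ we obtain $\Cof = \llp{(\Fib \cap \WE)}$, $\Fib \cap \WE = \rlp{\Cof}$, and factorizations as a cofibration followed by an acyclic fibration. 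In particular the lifting properties $(\Cof \cap \WE)\boxslash\Fib$ and $\Cof\boxslash(\Fib \cap \WE)$ and both factorizations hold at once, and since each of $\Cof$, $\Fib$, $\Cof \cap \WE$, and $\Fib \cap \WE$ is defined by a lifting property, each is automatically closed under retracts. Note also that isomorphisms lie in $\llp\Fib = \Cof \cap \WE \subseteq \WE$. Thus the proof reduces to the single assertion that $\WE$ is closed under retracts.

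Next I would establish a factorization characterization of the weak equivalences: a map lies in $\WE$ if and only if it factors as a map in $\Cof \cap \WE$ followed by a map in $\Fib \cap \WE$. The backward implication is the closure of $\WE$ under composition, a consequence of 2-of-3. For the forward implication, given $f \in \WE$ I would factor $f = p \circ i$ with $i \in \Cof$ and $p \in \Fib \cap \WE$ using the second weak factorization system; since $p \in \WE$ and $f \in \WE$, the 2-of-3 property forces $i \in \WE$, so $i \in \Cof \cap \WE$, as required.

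Finally I would prove retract-closure of $\WE$, the heart of the matter. Suppose $g$ is a retract of some $f \in \WE$. Factoring $g = p \circ i$ with $i \in \Cof$ and $p \in \Fib \cap \WE$, the 2-of-3 property reduces the problem to showing $i \in \WE$, equivalently (since $i \in \Cof$ already) that $i \in \Cof \cap \WE = \llp\Fib$. Writing $f = q \circ j$ with $j \in \Cof \cap \WE$ and $q \in \Fib \cap \WE$, I would use the lifting of $i \in \Cof$ against $q \in \Fib \cap \WE$ and of $j \in \llp\Fib$ against $p \in \Fib$ to construct comparison maps between the two factorizations induced by the retract data, with the aim of exhibiting $i$ as a retract of the acyclic cofibration $j$ in the arrow category; because $\llp\Fib$ is closed under retracts, this would yield $i \in \Cof \cap \WE$ and hence $g \in \WE$.

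The main obstacle is precisely this last construction. The lifts produced from the retract data furnish an endomorphism of the codomain of $i$ that restricts to the identity along $i$ and is compatible with $p$, yet need not be the identity on the nose, so the naive comparison does not immediately present $i$ as a genuine retract of $j$. Promoting this ``weak'' fixed-point datum to an honest retract is the delicate step, and is exactly the content of the result of Joyal and Tierney cited here; I would follow their argument to close this gap. Once retract-closure of $\WE$ is secured, all the model category axioms are in place.
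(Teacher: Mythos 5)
You have correctly isolated the one nontrivial point: with the definition of model category used in this paper, the two hypothesized weak factorization systems already supply the factorizations, the lifting properties, and retract-closure of the four classes $\Cof$, $\Fib$, $\Cof\cap\WE$, $\Fib\cap\WE$ (each being a left or right lifting class), so everything reduces to showing that $\WE$ itself is closed under retracts. (The paper offers no argument of its own, only the citation to Joyal--Tierney, so the comparison can only be against the standard proof of that result.) The problem is that your proposal does not actually prove this one remaining point, and the route you sketch is precisely the one that is known not to close. Having factored $g = p\circ i$ with $i\in\Cof$, $p\in\Fib\cap\WE$ and the ambient weak equivalence $f = q\circ j$ with $j\in\Cof\cap\WE$, $q\in\Fib\cap\WE$, the two lifts you propose (of $i$ against $q$ and of $j$ against $p$) yield maps $\alpha,\beta$ whose composite $\beta\alpha$ satisfies $\beta\alpha\, i = i$ and $p\,\beta\alpha = p$ but need not be the identity; there is no general device for upgrading such an endomorphism to an honest retraction, and you explicitly defer this step to the reference. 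Since that step is the entire mathematical content of the proposition, the proof is incomplete as written.

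What actually closes the argument is a different, asymmetric decomposition into two steps, neither of which appears in your outline. Step one: if a retract $g\colon X\to Y$ of $f\colon A\to B$ with $f\in\WE$ is itself a \emph{fibration}, factor $f = q\circ j$ with $j\in\Cof\cap\WE$ and (by 2-of-3) $q\in\Fib\cap\WE$, and lift $j$ against $g$ in the square whose top edge is the retraction $A\to X$ and whose bottom edge is $C\to B\to Y$. If $u\colon X\to A$ denotes the section, the comparison maps $j\circ u$ and the lift $t$ exhibit $g$ as a retract of $q$ \emph{on the nose}, because one of the two comparison maps is assembled from the given data rather than produced by lifting, so $t\circ(j\circ u)=1_X$ exactly; hence $g\in\Fib\cap\WE$, this class being retract-closed. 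Step two: for a general retract $g$ of $f\in\WE$, factor $g = p\circ i$ with $i\in\Cof\cap\WE$ and $p\in\Fib$ (note this is the opposite factorization to the one you chose), so that by 2-of-3 it suffices to show $p\in\WE$; then push out $i$ along the section $X\to A$. Since $\Cof\cap\WE=\llp{\Fib}$ is closed under pushout, the induced map out of the pushout to $B$ is a weak equivalence by 2-of-3, and the universal property of the pushout exhibits the fibration $p$ as a genuine retract of it, so step one applies. Without these two ideas --- the asymmetric retract construction in the fibration case and the pushout reduction of the general case --- your argument stalls exactly where you say it does, so the gap is genuine and not merely a matter of omitted routine detail.
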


\begin{defn}
Let $(\M,\Fib,\Cof,\WE)$ be a model category, and consider a pair of adjunctions
\[ \xymatrix@C=4pc@R=4pc{ \K \ar@<1ex>[r]^V \ar@{}[r]|\perp & \M \ar@<1ex>[l]^R \ar@<1ex>[r]^L\ar@{}[r]|\perp & \C \ar@<1ex>[l]^U}\]
where the categories $\C$ and $\K$ are bicomplete.
If they exist:
\begin{itemize}
\item the \emph{right-induced model structure} on $\C$ is given by \[\big(\C,U^{-1}\Fib, \llp\,{U^{-1}(\Fib \cap\WE)}, U^{-1}\WE\big),\] and
\item the \emph{left-induced model structure} on $\K$ is given by \[ \big(\K, \rlp{(V^{-1}(\Cof\cap\WE))}, V^{-1}\Cof, V^{-1}\WE\big).\]
\end{itemize}
\end{defn}

{In other words, in the right-induced model structure, fibrations and weak equivalences are created by $U$, while in the left-induced model structure, cofibrations and weak equivalences are created by $V$.}

When they exist,  the left- and right-induced model structures can be characterized easily as follows.
\begin{itemize}
\item If the right-induced model structure exists on $\C$, then both of its weak factorization systems are right-induced from the weak factorization systems on $\M$, {i.e., the right classes are created by $U$.}
\item If the left-induced model structure exists on $\K$, then both of its weak factorization systems are left-induced from the weak factorization systems on $\M$, {i.e., left classes are created by $V$.}
\end{itemize}

In Section \ref{sec:induced_awfs}, we show that if $(\M, \Fib, \Cof,\WE)$ is an accessible model category, and $\C$ and $\K$ are locally presentable, then the right-induced weak factorization systems, whose right classes are $U^{-1}\Fib$ and $U^{-1}(\Fib \cap\WE)$, exist on $\C$, and the left-induced weak factorization systems, whose left classes are $V^{-1}(\Cof\cap\WE)$ and $V^{-1}\Cof$, exist on $\K$. The following result explains how the existence of right- and left-induced model structures reduces to the acyclicity condition.

\begin{prop}\label{prop:acyclicity-reduction} Suppose $(\M, \Fib, \Cof,\WE)$ is a model category, $\C$ and $\K$ are bicomplete categories, and there exist adjunctions 
\[ \xymatrix@C=4pc@R=4pc{ \K \ar@<1ex>[r]^V \ar@{}[r]|\perp & \M \ar@<1ex>[l]^R \ar@<1ex>[r]^L\ar@{}[r]|\perp & \C \ar@<1ex>[l]^U}\] so that the right-induced weak factorization systems exists on $\C$, and the left-induced weak factorization systems exists on $\K$.  It follows that
\begin{enumerate}
\item the right-induced model structure exists on $\C$ if and only if \[\llp\,{U^{-1}\Fib} \subset U^{-1}\WE;\] and
\item the left-induced model structure exists on $\K$ if and only if \[\rlp{(V^{-1}\Cof)} \subset V^{-1}\WE.\]
\end{enumerate}
\end{prop}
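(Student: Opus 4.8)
The plan is to exploit Proposition \ref{prop:model-via-wfs}: under the stated hypotheses, the two required weak factorization systems already exist by assumption, and $\WE$ (being $U^{-1}\WE$ or $V^{-1}\WE$, the preimage of a class satisfying 2-of-3) automatically satisfies 2-of-3. So the only thing that can fail is that the two given weak factorization systems are not \emph{compatible} in the way a model structure demands — that is, that the left class of the ``acyclic'' system genuinely coincides with the intersection of the left class of the ``plain'' system with $\WE$, and dually on the right. The acyclicity condition is exactly what bridges this gap, and my proof will show the condition is equivalent to this compatibility.

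Concentrating on case (1) (case (2) is formally dual, obtained by reversing all arrows, so I would simply remark on this rather than repeat the argument), I would argue as follows. The right-induced weak factorization systems are $\big(\llp\,{U^{-1}\Fib},\, U^{-1}\Fib\big)$ and $\big(\llp\,{U^{-1}(\Fib\cap\WE)},\, U^{-1}(\Fib\cap\WE)\big)$. To get a model structure via Proposition \ref{prop:model-via-wfs} with cofibrations $\Cof_\C := \llp\,{U^{-1}(\Fib\cap\WE)}$ and fibrations $\Fib_\C := U^{-1}\Fib$ and weak equivalences $\WE_\C := U^{-1}\WE$, I need precisely
\[ \Cof_\C \cap \WE_\C = \llp\,{U^{-1}\Fib} \quad\text{and}\quad \Fib_\C \cap \WE_\C = U^{-1}(\Fib\cap\WE). \]
The first key step is to verify that the \emph{right-hand} identity $\Fib_\C\cap\WE_\C = U^{-1}(\Fib\cap\WE)$ holds unconditionally: this is just the observation that $U^{-1}\Fib \cap U^{-1}\WE = U^{-1}(\Fib\cap\WE)$, since preimages commute with intersections. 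So the fibration side needs no hypothesis. The entire content is on the cofibration side.

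The second, decisive step is to establish that the cofibration identity $\llp\,{U^{-1}(\Fib\cap\WE)} = \llp\,{U^{-1}\Fib}\cap U^{-1}\WE$ is equivalent to the acyclicity condition $\llp\,{U^{-1}\Fib}\subset U^{-1}\WE$. One inclusion, $\llp\,{U^{-1}(\Fib\cap\WE)} \supset \llp\,{U^{-1}\Fib}\cap U^{-1}\WE$, is automatic and holds with no hypothesis, since $U^{-1}(\Fib\cap\WE)\subset U^{-1}\Fib$ forces $\llp$ of the smaller class to contain $\llp$ of the larger. For the reverse inclusion I would use the general lifting-system fact that any element of $\llp\,{U^{-1}(\Fib\cap\WE)}$ is a retract of a map in $\llp\,{U^{-1}(\Fib\cap\WE)}$ built by factoring through the \emph{other} weak factorization system, so that it decomposes as a map in $\llp\,{U^{-1}\Fib}$ followed by a map in $U^{-1}\Fib$ that is also a retract datum; the retract argument then pins any such map into $\llp\,{U^{-1}\Fib}$, giving $\llp\,{U^{-1}(\Fib\cap\WE)}\subset \llp\,{U^{-1}\Fib}$ outright, and the acyclicity condition is exactly what supplies the needed $\subset U^{-1}\WE$. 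Conversely, if the full model structure exists then every map in $\llp\,{U^{-1}\Fib} = \Cof_\C\cap\WE_\C$ is in particular a weak equivalence, which is the acyclicity condition verbatim.

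The step I expect to be the main obstacle is the reverse cofibration inclusion, i.e.\ showing $\llp\,{U^{-1}(\Fib\cap\WE)}\subset\llp\,{U^{-1}\Fib}$, since this is where one must genuinely use the interaction between the two weak factorization systems rather than formal manipulation of preimages. The clean way to handle it is the standard retract-closure argument for weak factorization systems (a map with the left lifting property against a class is a retract of any of its factorizations through that class), applied to a factorization of the given map through the \emph{acyclic-cofibration/fibration} system; I would need to check that this factorization lands the first factor in $\llp\,{U^{-1}\Fib}$ and that the acyclicity hypothesis $\llp\,{U^{-1}\Fib}\subset U^{-1}\WE$ combined with 2-of-3 forces the second factor into $U^{-1}(\Fib\cap\WE)$, closing the retract. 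Everything else is bookkeeping with preimages and the 2-of-3 property.
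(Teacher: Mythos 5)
Your overall strategy is the paper's: reduce via Proposition \ref{prop:model-via-wfs} to a single compatibility identity between the two induced weak factorization systems, observe that the fibration-side identity is automatic because preimages commute with intersections, and handle the remaining identity by a retract argument. However, you have written that remaining identity with the intersection on the wrong side, and this propagates into a genuine error. The condition Proposition \ref{prop:model-via-wfs} actually requires is that the left class of the system paired with $U^{-1}\Fib$ equal the cofibrations intersected with the weak equivalences, i.e.
\[ \llp\,{U^{-1}\Fib} \;=\; \big(\llp\,{U^{-1}(\Fib\cap\WE)}\big)\cap U^{-1}\WE, \]
whereas you state the target as $\llp\,{U^{-1}(\Fib\cap\WE)} = \llp\,{U^{-1}\Fib}\cap U^{-1}\WE$. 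Under acyclicity the right-hand side of your version is just $\llp\,{U^{-1}\Fib}$, so your identity asserts that every cofibration is an acyclic cofibration --- false in any nontrivial example (take $\C=\M$ and $U=\id$).

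Consequently your ``decisive step,'' the claim that $\llp\,{U^{-1}(\Fib\cap\WE)}\subset\llp\,{U^{-1}\Fib}$ holds ``outright,'' cannot be proved, and the retract argument you sketch for it breaks at a precise point: after factoring $f=p\circ i$ with $i\in\llp\,{U^{-1}\Fib}$ and $p\in U^{-1}\Fib$, you need $p\in U^{-1}(\Fib\cap\WE)$ in order for $f$ to lift against $p$ and thereby become a retract of $i$. Acyclicity puts $i$ in $U^{-1}\WE$, but 2-of-3 then forces $p\in U^{-1}\WE$ only if $f$ itself is a weak equivalence --- a hypothesis you never impose. That hypothesis is exactly the missing ingredient: the retract argument proves $\big(\llp\,{U^{-1}(\Fib\cap\WE)}\big)\cap U^{-1}\WE\subset\llp\,{U^{-1}\Fib}$ and nothing more. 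With the identity stated correctly, your argument --- the easy inclusion from anti-monotonicity of $\llp{(-)}$ together with acyclicity in one direction, and the retract argument applied to maps that are both cofibrations and weak equivalences in the other --- is precisely the paper's proof, and your duality remark disposing of (2) is fine.
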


The conditions listed in Proposition \ref{prop:acyclicity-reduction}.(1) and (2) are the \emph{acyclicity conditions} for right- and left-induced model structures.

\begin{proof}
The 2-functor $(-)^\op \colon \Cat^\text{co} \lra \Cat$ exchanges left and right adjoints and cofibrations and fibrations; hence, the statements are dual.\footnote{{The 2-category of categories, functors, and natural transformations has two levels of duals. The superscript $(-)^\text{co}$ signals that the functor $(-)^\op$ acts covariantly on functors but contravariantly on natural transformations.}} By Proposition \ref{prop:model-via-wfs} the right-induced weak factorization systems
\[ \big(\llp\,{U^{-1}\Fib},U^{-1}\Fib\big)\quad \mathrm{and} \quad \big(\llp\,{U^{-1}(\Fib \cap\WE)}, U^{-1}(\Fib \cap\WE)\big)\] define a model structure on $\C$ with weak equivalences $U^{-1}\WE$ if and only if the ``acyclic cofibrations'' are precisely the intersection of the ``cofibrations'' with the weak equivalences, i.e., if and only if
\begin{equation}\label{eq:induced-compatibility} \llp\,{U^{-1}\Fib} = \big(\llp\,{U^{-1}(\Fib \cap\WE)}\big) \cap U^{-1}\WE.\end{equation} As $U^{-1}(\Fib \cap \WE) \subset U^{-1}\Fib$, we have $\llp\,{U^{-1}\Fib} \subset \llp\,{U^{-1}(\Fib \cap \WE)}$. The acyclicity condition, which is clearly necessary, asserts that $\llp\,{U^{-1}\Fib} \subset U^{-1}\WE$. Thus the left-hand side of \eqref{eq:induced-compatibility} is contained in the right-hand side. 

By a standard model categorical argument it follows that the right-hand side is also contained in the left-hand side (see, e.g., the proof of \cite[2.1.19]{hovey}). Given a map $f \in \big(\llp\,{U^{-1}(\Fib \cap\WE)}\big) \cap U^{-1}\WE$, construct a factorization using the weak factorization system $(\llp\,{U^{-1}\Fib},U^{-1}\Fib)$, and arrange the left and right factors as displayed.
\[ \xymatrix{ \bullet \ar[d]_f \ar[r]^{ \in \llp\,{U^{-1}\Fib}} & \bullet \ar[d]^{\in U^{-1}\Fib} \\ \bullet \ar@{=}[r] \ar@{-->}[ur] & \bullet}\] We know that $f$ and the top horizontal map are weak equivalences, so the 2-of-3 property of the class $U^{-1}\WE$ implies that the right vertical map is a member  of $U^{-1}\WE$ as well. The right vertical map therefore lies in $U^{-1}(\Fib \cap \WE)$, which means the displayed diagonal lift exists, whence $f$ is a retract of its left factor and thus a member of the class $\llp{U^{-1}\Fib}$ as claimed.
\end{proof}

\subsection{Techniques for proving acyclicity}\label{subsection:acyclicity}
Quillen proved that a map in a model category $\M$ is a weak equivalence if and only if it is inverted by the localization functor $\M \lra \Ho\M$ \cite[Proposition 5.1]{quillen}. It follows that the class of weak equivalences in a model category always satisfies the \emph{2-of-6 property} (which is stronger than the 2-of-3 property): 
for any composable triple of maps in $\M$
\[ \xymatrix@C=3pc{ A \ar[r]^{f} & B \ar[r]^{g} & C \ar[r]^{h} &D, }\] 
if $gf$ and $hg$ are weak equivalences, then so are $f,g,h$, and $hgf$.

The following theorem combines the 2-of-6 property with the dual of the Quillen Path Object Argument \cite{quillen} to prove acyclicity in various settings.

{\begin{thm}\label{thm:quillen-path} Consider an adjunction between locally presentable categories
\[ \xymatrix@C=4pc{ \K \ar@<1ex>[r]^V \ar@{}[r]|\perp & \M, \ar@<1ex>[l]^K}\]
where $\M$ is an accessible (e.g.,  cofibrantly generated) model category. 
If \begin{enumerate}
\item for every object $X$ in $\K$, there exists a morphism $\epsilon_X \colon QX \rightarrow X$ such that $V\epsilon_{X}$ is a weak equivalence and $V(QX)$ is cofibrant in $\M$, 
\item for each morphism $f\colon X \to Y$ in $\K$  there exists a morphism $Qf\colon QX \to QY$ satisfying $\epsilon _{Y}\circ Qf =f\circ \epsilon_{X}$, and
\item for every object $X$ in $\K$, there exists a factorization $$QX \coprod QX \xrightarrow j\mathrm{Cyl}(QX) \xrightarrow p QX$$  of the fold map such that $Vj$ is a cofibration and $Vp$ is a weak equivalence,
\end{enumerate}
then the acyclicity condition holds for left-induced weak factorization systems on $\K$ and thus the left-induced model structure on $\K$ exists.
\end{thm}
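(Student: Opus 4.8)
The plan is to verify the acyclicity condition for left-induced weak factorization systems, which by Proposition~\ref{prop:acyclicity-reduction}.(2) reads $\rlp{(V^{-1}\Cof)} \subset V^{-1}\WE$. So I start with an arbitrary map $f \colon X \to Y$ in $\K$ that has the right lifting property with respect to every map whose image under $V$ is a cofibration, and I must show $Vf$ is a weak equivalence in $\M$. The strategy is the dual of Quillen's Path Object Argument: I will use the functorial cofibrant-replacement-like data $(Q, \epsilon)$ from hypotheses (1)--(2) together with the cylinder objects from hypothesis (3) to factor $f$ up to weak equivalence through maps that $V$ sends to weak equivalences, and then invoke the 2-of-6 property of $\WE$ to conclude.

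\medskip

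First I would set up a lifting square exploiting the cylinder object. The key observation is that $f \in \rlp{(V^{-1}\Cof)}$ means precisely that $f$ lifts against any map $j$ in $\K$ with $Vj \in \Cof$; hypothesis (3) produces exactly such a map $j \colon QX \amalg QX \to \mathrm{Cyl}(QX)$. Concretely, I would consider the composite $QX \amalg QX \xrightarrow{(\epsilon_X, \text{something})} X$ or, more carefully, build a square whose left leg is $j$ and whose right leg is $f$, so that a diagonal lift furnishes a ``homotopy'' $H \colon \mathrm{Cyl}(QX) \to X$ detecting that $Qf$, after composing with $\epsilon$, is compatible with $f$. The point of this maneuver is standard in the (dual) Quillen argument: it shows that the two canonical maps $QX \to X$ arising from the endpoints of the cylinder become related through $f$, which after applying $V$ and using that $Vj$ is a cofibration, $Vp$ a weak equivalence, and $V(QX)$ cofibrant in $\M$, lets me deduce a factorization of $Vf$ (up to the weak equivalences $V\epsilon_X, V\epsilon_Y$) as a composite of maps each of which is forced to be a weak equivalence.

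\medskip

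Concretely, after applying $V$ the naturality square of hypothesis (2), namely $\epsilon_Y \circ Qf = f \circ \epsilon_X$, gives a commuting square in $\M$ in which the two horizontal legs $V\epsilon_X$ and $V\epsilon_Y$ are weak equivalences. This reduces the question of whether $Vf$ is a weak equivalence to whether $V(Qf) \colon V(QX) \to V(QY)$ is, by the 2-of-3 property. For the latter I would use the lift $H$ obtained above to produce a left homotopy in $\M$ between $V(Qf)$-related maps on the cofibrant object $V(QX)$, and combine the various weak equivalences $V\epsilon$ and $Vp$ so that the 2-of-6 property applies to a composable string of maps in $\M$: the hypotheses are arranged precisely so that two overlapping composites in this string are weak equivalences, forcing the intermediate maps — and hence $Vf$ — to be weak equivalences as well.

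\medskip

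I expect the main obstacle to be the bookkeeping in the lifting step: correctly assembling the commutative square with left leg $j$ and right leg $f$ so that the diagonal lift $H$ encodes the homotopy I need, and then tracking how $H$, $\epsilon$, and $p$ interact after applying $V$ so that the resulting maps in $\M$ form a genuine composable triple (or quadruple) to which the 2-of-6 property can be applied. The individual ingredients — cofibrancy of $V(QX)$, the weak equivalences $V\epsilon$ and $Vp$, the cofibration $Vj$ — are each handed to me by the hypotheses, so no small-object argument or presentability input is needed here (those were already used to guarantee the left-induced weak factorization systems exist); the real content is purely the homotopical diagram chase dualizing \cite{quillen}, and getting the variance and endpoints of the cylinder right is where care is required.
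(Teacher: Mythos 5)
Your overall strategy---the dual Quillen cylinder argument, lifting $f$ against the map $j$ from hypothesis (3), and finishing with 2-of-6---is the paper's strategy, and your reduction via the naturality square $\epsilon_Y \circ Qf = f\circ\epsilon_X$ is sound. But there is a genuine gap at the point you leave as ``$(\epsilon_X,\text{something})$'': that second component cannot be filled in from the data you have assembled. The paper uses the lifting hypothesis on $f$ \emph{twice}, and you have only planned for the second use. The first, which you omit entirely, is the lift in the square with left leg $\emptyset \to QY$ and right leg $f$: since $V(QY)$ is cofibrant, $\emptyset \to QY$ lies in $V^{-1}\Cof$, so $f\in\rlp{(V^{-1}\Cof)}$ produces a section $s\colon QY \to X$ with $f\circ s = \epsilon_Y$. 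This is exactly where the cofibrancy requirement in hypothesis (1) earns its keep. Only with $s$ in hand can you form the square
\[ \xymatrix@C=4pc{ QX \coprod QX \ar[r]^-{\epsilon_X \coprod (s\circ Qf)} \ar[d]_{j} & X \ar[d]^f \\ \mathrm{Cyl}(QX) \ar[r]_-{f\circ\epsilon_X\circ p} \ar@{-->}[ur]^h & Y,}\]
which commutes precisely because $f\circ s\circ Qf = \epsilon_Y\circ Qf = f\circ\epsilon_X$.

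Without $s$ your concluding step also has nothing to bite on: the 2-of-6 property must be applied to the composable triple $QX \xrightarrow{Qf} QY \xrightarrow{s} X \xrightarrow{f} Y$, where the two overlapping composites $s\circ Qf$ and $f\circ s=\epsilon_Y$ are the weak equivalences. (That $s\circ Qf$ is a weak equivalence follows from the lift $h$: the end inclusion $j\circ\iota_1$ is a weak equivalence by 2-of-3 against $p$, and $h\circ j\circ\iota_1=\epsilon_X$ is one, so $h$ is, so $h\circ j\circ\iota_2 = s\circ Qf$ is.) So the missing piece is not bookkeeping in the sense of tracking signs or endpoints; it is a second, structurally necessary application of the lifting property of $f$, without which neither the cylinder square nor the final 2-of-6 string can be written down.
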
}

In particular, (1) holds automatically if all objects are cofibrant in $\M$. 

{\begin{terminology} Abusing language somewhat, we henceforth summarize conditions (1) and (2) above by saying that \emph{$\K$ admits underlying-cofibrant replacements}.  Note that we have deliberately not required functoriality of $Q$, as it is not necessary to the proof below, and moreover does not hold in certain interesting examples.
\end{terminology}}

\begin{proof}
{We will show in Theorem \ref{thm:left-awfs} that the required factorizations, and thus the left-induced weak factorization systems, exist. By Proposition \ref{prop:acyclicity-reduction}, it  remains to prove the acyclicity condition, i.e., } to show  that $\rlp{(V^{-1}\Cof)} \subset V^{-1}\WE$.  

Let ${f: X \lra Y \in  \rlp{(V^{-1}\Cof)}}$. 
First form a lift 
\[ \xymatrix@C=4pc@R=4pc{ \emptyset \ar@{ >->}[d] \ar[r] & QX \ar[r]^\sim_{\epsilon_X} \ar[d]^(.6){Qf} & X \ar[d]^f \\ QY \ar@{=}[r] \ar@{-->}[urr]^(.4)s & QY \ar[r]_\sim^{\epsilon_Y} & Y.}\] 
Next form a lift
\[ \xymatrix@C=4pc@R=4pc{ QX \coprod QX \ar[rr]^{\epsilon_X \coprod (s\circ Qf)} \ar@{ >->}[d] & & X \ar[d]^f \\ \mathrm{Cyl}(QX) \ar[r] \ar@{-->}[urr]^h & QX \ar[r]_{f \circ \epsilon_X} & Y.}\] 
Applying 2-of-3 to the weak equivalence $QX\xrightarrow {\iota_{1}} QX \coprod QX \lra \mathrm{Cyl}(QX)$ and using the fact that $\epsilon_X$ is a weak equivalence, we see that $h$ is a weak equivalence, which implies that $s \circ Qf$ is a weak equivalence. Now apply 2-of-6 to 
\[ \xymatrix@C=4pc@R=4pc{ QX \ar[r]^{s \circ Qf}_{\sim} \ar[d]_{Qf} & X \ar[d]^f \\ QY \ar[r]_{\epsilon_Y}^{\sim} \ar[ur]^s & Y }\] 
to conclude that $f \in V^{-1}\WE$.
\end{proof}

Theorems below illustrate typical applications of Theorem \ref{thm:quillen-path}, which we apply to specific monoidal model categories later in this paper; see Corollary~\ref{cor.of.2.2.3} and Section~\ref{subsec.dg.coring}. 

\begin{thm}\label{thm:a-mod-cylinder} Suppose $(\V,\otimes, \uI)$ is a monoidal model category in which the mon\-oid\-al unit $\uI$ is cofibrant. Let $\M$ be an {accessible} $\V$-model category \cite[4.2.18]{hovey} and a closed monoidal category.  If $A$ is a monoid in $\M$ such that the category $\Mod_A$ of right $A$-modules admits underlying-cofibrant replacements (e.g., if all objects of $\M$ are cofibrant),
then $\Mod_A$ admits a model structure left-induced from the forgetful/hom-adjunction
\[ \xymatrix@C=4pc{ \Mod_A \ar@<1ex>[r]^U \ar@{}[r]|\perp & \M. \ar@<1ex>[l]^{\hom(A,-)}}\]
\end{thm}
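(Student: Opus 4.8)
The plan is to verify the hypotheses of Theorem \ref{thm:quillen-path} for the forgetful/hom-adjunction $U \colon \Mod_A \rightleftarrows \M \colon \hom(A,-)$, since once this is done, that theorem immediately produces the left-induced model structure on $\Mod_A$. The set-theoretic side is free: $\M$ is accessible hence locally presentable, and $\Mod_A$, being a category of modules over a monoid in a locally presentable closed monoidal category, is again locally presentable. Conditions (1) and (2) of Theorem \ref{thm:quillen-path} are precisely the assumption that $\Mod_A$ admits underlying-cofibrant replacements, which is hypothesized (and holds automatically when every object of $\M$ is cofibrant). So the entire content of the proof is to construct, for each right $A$-module $X$, a cylinder object
\[ QX \coprod QX \xrightarrow{j} \Cyl(QX) \xrightarrow{p} QX \]
\emph{in $\Mod_A$} whose image under $U$ is a cofibration followed by a weak equivalence in $\M$; this is condition (3).

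The main idea is to build the cylinder by tensoring with an interval object supplied by the monoidal structure on $\V$. First I would fix a cylinder object $I$ for the monoidal unit $\uI$ in $\V$, i.e. a factorization $\uI \coprod \uI \rightarrowtail I \xrightarrow{\sim} \uI$ of the fold map into a cofibration followed by a weak equivalence. Since $\M$ is a $\V$-model category, it is tensored over $\V$, and I would then form the cylinder on a module $QX$ by the tensor $\Cyl(QX) := I \otimes_{\V} QX$ (using the $\V$-tensoring), with $j$ and $p$ induced from the structure maps of $I$ by functoriality. Because $V(QX) = U(QX)$ is cofibrant in $\M$ and the $\uI$-cylinder $I$ has the required cofibration/weak-equivalence structure, the pushout-product (SM7) axiom of the $\V$-model category $\M$ guarantees that $U j = (\uI \coprod \uI \rightarrowtail I) \hat\otimes (\emptyset \rightarrowtail U(QX))$ is a cofibration in $\M$ and the unit axiom together with $\uI$ being cofibrant guarantees that $Up$ is a weak equivalence. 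The key point making this work at the level of modules rather than merely in $\M$ is that $\M$ is closed monoidal, so the $A$-module structure on $QX$ can be carried through the $\V$-tensoring: $I \otimes_{\V} QX$ inherits a canonical right $A$-action, and $j,p$ are $A$-module maps whose underlying maps in $\M$ are exactly the pushout-product and the structure map just described.

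The step I expect to require the most care is precisely this compatibility between the two tensor structures --- the external $\V$-tensoring on $\M$ and the internal monoidal product of $\M$ that supports the $A$-module structure --- and checking that the resulting $j$ and $p$ are genuine morphisms of $A$-modules with the claimed underlying behavior. Concretely, one must confirm that $U$ (the forgetful functor) carries the module-level factorization to the $\M$-level factorization $Uj, Up$, so that the cofibration/weak-equivalence conditions verified in $\M$ via SM7 transport to condition (3) of Theorem \ref{thm:quillen-path}. Once the $\V$-cylinder construction is seen to respect the $A$-action and $U$ is seen to preserve the relevant factorization, the remaining verifications ($Uj$ a cofibration, $Up$ a weak equivalence) are exactly the standard monoidal-model-category computations using cofibrancy of $\uI$, and the conclusion follows by invoking Theorem \ref{thm:quillen-path}.
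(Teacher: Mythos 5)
Your proposal is correct and follows essentially the same route as the paper: both arguments verify Theorem \ref{thm:quillen-path} by lifting the tensoring $\M \times \V \to \M$ to a bifunctor $\Mod_A \times \V \to \Mod_A$ and applying $QX \otimes -$ to a good cylinder $\uI \coprod \uI \rightarrowtail \Cyl(\uI) \xrightarrow{\sim} \uI$ for the unit. The one small imprecision is your appeal to the ``unit axiom'' for $Up$: the paper instead deduces that $Up$ is a weak equivalence from Ken Brown's lemma, since $\Cyl(\uI) \to \uI$ is a weak equivalence between cofibrant objects of $\V$ and $QX \otimes -$ is left Quillen by the pushout-product axiom because $U(QX)$ is cofibrant.
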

\begin{proof}  Since $\M$ is locally presentable, $\Mod_{A}$ is locally presentable as well, as it is a category of algebras over an accessible monad \cite[2.47, 2.78]{adamek-rosicky}. Condition (1) of Theorem \ref{thm:quillen-path} holds by hypothesis. To prove condition (2) of Theorem \ref{thm:quillen-path}, pick any good cylinder object 
\begin{equation}\label{eq:unit-cylinder}  \uI \coprod \uI  \rightarrowtail \mathrm{Cyl}(\uI) \xrightarrow{\sim} \uI\end{equation} 
for the monoidal unit in $\V$. {Note that morphism from $\mathrm{Cyl}(\uI)$ to $\uI$ is in particular a weak equivalence between cofibrant objects, since $\uI$ is cofibrant.} 

The tensor product bifunctor $ \M \times \V \to \M$ lifts to define a tensor product $\Mod_A\times \V \to \Mod_A$ on $A$-modules. For any $A$-module $M$ whose underlying object is cofibrant, applying $M \otimes - \colon \V \to \Mod_A$ to (\ref{eq:unit-cylinder}) defines a good cylinder object on $M$ in $\Mod_A$, {by Ken Brown's lemma and the pushout-product axiom of a $\V$-model category.} The conclusion now follows from Theorem \ref{thm:quillen-path}.
\end{proof}

When the monoid $A$ in  $\M$ is strictly dualizable (i.e., the natural map $A\to \hom \big(\hom (A, \uI), \uI \big)$ is an isomorphism), Theorem \ref{thm:a-mod-cylinder} is a special case of the following general result, which produces left-induced model structures on categories of coalgebras over a cooperad. A review of the basic theory of operads and cooperads can be found in \S\ref{sssec:operad}.  For cooperads, the only applications in this paper are in the setting of chain complexes, but nevertheless we state the following theorem in general.

\begin{thm}\label{thm:cooperad-cylinder} Let $(\V,\otimes, \uI)$ be a monoidal model category  in which the mon\-oid\-al unit $\uI$ is cofibrant, and let $(\M, \sm, \mathbb S)$ be a monoidal category that is also equipped with an {accessible} model structure.  Let $-\boxtimes-: \M \times \V \to \M$ be an op-monoidal functor such that $X\boxtimes \uI \cong X$, and $X\boxtimes-$ preserves finite coproducts, cofibrations, and weak equivalences {between cofibrant objects} whenever $X$ is cofibrant. Let  $\Op Q$ be a cooperad on $\M$ such that the category $\cQ\text{-}\Coalg$ is locally presentable.  {Let $\Op Q'$ be a cooperad in $\V$ equipped with a map $\Op Q \boxtimes^{*} \Op Q' \lra \Op Q$ of cooperads in $\M$, where $\boxtimes^{*}$ denotes here the functor given by applying $\boxtimes$ levelwise,} and such that $\uI$ admits the structure of a $\Op Q'$-coalgebra, extending to 
\begin{equation}\label{eq:unit-cylinder2}  \uI \coprod \uI   \rightarrowtail \mathrm{Cyl}(\uI) \xrightarrow{\sim} \uI\end{equation}
in {$\cQ'\text{-}\Coalg_{\V}$}, where cofibrations and weak equivalences are created in $\V$. 

If $\cQ\text{-}\Coalg$ admits underlying-cofibrant replacements (e.g., if all objects of $\M$ are cofibrant), 
 then it admits a model structure that is left-induced via the forgetful/cofree adjunction
\[ \xymatrix@R=4pc@C=4pc{ \cQ\text{-}\Coalg \ar@<1ex>[r]^-V \ar@{}[r]|-\perp & \M. \ar@<1ex>[l]^-{\Gamma_\cQ}}\] 
\end{thm}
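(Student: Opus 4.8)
The plan is to deduce the result from Theorem \ref{thm:quillen-path}, exactly as in the proof of Theorem \ref{thm:a-mod-cylinder}, with the role of the tensoring $M \otimes -$ now played by the op-monoidal functor $M \boxtimes -$. Since $\cQ\text{-}\Coalg$ is locally presentable by hypothesis and $\M$ is an accessible model category, the forgetful/cofree adjunction $V \dashv \Gamma_\cQ$ satisfies the standing assumptions of Theorem \ref{thm:quillen-path}, and (granting Theorem \ref{thm:left-awfs}) the left-induced weak factorization systems exist. Conditions (1) and (2) of Theorem \ref{thm:quillen-path} are precisely the assumption that $\cQ\text{-}\Coalg$ admits underlying-cofibrant replacements. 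Thus everything reduces to verifying condition (3): for every $\cQ$-coalgebra $M$ whose underlying object $VM$ is cofibrant in $\M$, we must produce a factorization $M \coprod M \to \mathrm{Cyl}(M) \to M$ of the fold map whose image under $V$ is a cofibration followed by a weak equivalence.

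The key construction is to set $\mathrm{Cyl}(M) := M \boxtimes \mathrm{Cyl}(\uI)$, obtained by applying $M \boxtimes -$ to the cylinder object (\ref{eq:unit-cylinder2}) for the unit. The first point to establish --- and the main technical obstacle --- is that $M \boxtimes -$ refines to a functor $\cQ'\text{-}\Coalg_{\V} \to \cQ\text{-}\Coalg$. Concretely, if $N$ is a $\cQ'$-coalgebra in $\V$, then the op-monoidal structure on $\boxtimes$ supplies natural comparison maps $(X \sm X') \boxtimes (Y \otimes Y') \to (X \boxtimes Y) \sm (X' \boxtimes Y')$ and $\mathbb{S} \boxtimes \uI \to \mathbb{S}$ that allow the $\cQ$-coalgebra structure on $M$ and the $\cQ'$-coalgebra structure on $N$ to be combined levelwise into a $(\cQ \boxtimes^{*} \cQ')$-coalgebra structure on $M \boxtimes N$; restricting along the given cooperad map $\cQ \boxtimes^{*} \cQ' \to \cQ$ then endows $M \boxtimes N$ with a $\cQ$-coalgebra structure, naturally in $N$. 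Checking the coherence of these comparison maps against the cooperadic structure maps is the crux of the argument; once it is in place, applying this functor to (\ref{eq:unit-cylinder2}) produces a sequence $M \boxtimes (\uI \coprod \uI) \to M \boxtimes \mathrm{Cyl}(\uI) \to M \boxtimes \uI$ in $\cQ\text{-}\Coalg$.

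It remains to identify this sequence with a factorization of the fold map and to verify the lifting behaviour under $V$. Because $V$ and the forgetful functor $\cQ'\text{-}\Coalg_{\V} \to \V$ are left adjoints, and $VM \boxtimes -$ preserves finite coproducts when $VM$ is cofibrant, the functor $M \boxtimes -$ preserves finite coproducts; combined with the isomorphism $X \boxtimes \uI \cong X$ this identifies the outer terms as $M \boxtimes (\uI \coprod \uI) \cong M \coprod M$ and $M \boxtimes \uI \cong M$, and the composite as the fold map. Applying $V$, which preserves colimits, returns up to these isomorphisms the result of applying $VM \boxtimes -$ to (\ref{eq:unit-cylinder2}). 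Since $\uI$ is cofibrant and $\uI \coprod \uI \rightarrowtail \mathrm{Cyl}(\uI)$ is a cofibration in $\V$, the object $\mathrm{Cyl}(\uI)$ is also cofibrant; hence, because $VM$ is cofibrant, the hypotheses on $\boxtimes$ guarantee that $VM \boxtimes -$ sends the cofibration $\uI \coprod \uI \rightarrowtail \mathrm{Cyl}(\uI)$ to a cofibration and the weak equivalence $\mathrm{Cyl}(\uI) \xrightarrow{\sim} \uI$ between cofibrant objects to a weak equivalence. This verifies condition (3) of Theorem \ref{thm:quillen-path}, and the existence of the left-induced model structure on $\cQ\text{-}\Coalg$ follows.
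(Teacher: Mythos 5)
Your proposal is correct and follows essentially the same route as the paper: both reduce to Theorem \ref{thm:quillen-path}, dispose of conditions (1) and (2) via the underlying-cofibrant replacement hypothesis, and verify condition (3) by applying $C\boxtimes-$ to the unit cylinder \eqref{eq:unit-cylinder2}, lifting the result to $\cQ\text{-}\Coalg$ via the op-monoidal comparison maps composed with the cooperad map $\cQ\boxtimes^{*}\cQ'\to\cQ$. Your explicit check that $\mathrm{Cyl}(\uI)$ is cofibrant (so that the weak equivalence being preserved is one between cofibrant objects) is a detail the paper leaves implicit, but the argument is the same.
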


\begin{rmk}  It follows from \cite[Proposition A.1]{ching-riehl} that if $V\Gamma_{\cQ}$ is an accessible endofunctor, then $\cQ\text{-}\Coalg$ is locally presentable, since $\M$ is locally presentable.
\end{rmk}

\begin{rmk} If $\M$ is an {accessible} $\V$-model category, then the hypotheses in the first two sentences of Theorem \ref{thm:cooperad-cylinder} are all satisfied. The reason for enumerating them individually is that certain interesting examples, such as those appearing in Theorem \ref{thm:cooperadsQQ'}, satisfy these conditions without being $\V$-model categories.
\end{rmk}

\begin{proof}
The first condition of Theorem \ref{thm:quillen-path} holds by the hypothesis on underlying-cofibrant replacement. The remaining hypotheses combine to prove the second condition, as follows. {The morphism $\Op Q \boxtimes^{*} \Op Q' \lra \Op Q$ induces} a bifunctor 
$$ \cQ\text{-}\Coalg \times \cQ'\text{-}\Coalg_{\V} \to  \cQ\text{-}\Coalg.$$
If $C$ is a $\cQ$-coalgebra whose underlying object is cofibrant, then applying $C\boxtimes-$ to \eqref{eq:unit-cylinder2}  defines a good cylinder object for $C \in \cQ\text{-}\Coalg$.  Explicitly,  if $\big(C, \{\rho_{n}\}_{n}\big)$ is a $\cQ$-coalgebra, then applying $C\boxtimes -$ to \eqref{eq:unit-cylinder2} gives
$$C\coprod C \rightarrowtail C\boxtimes\mathrm{Cyl}(\uI) \xrightarrow{\sim} C,$$
which lifts to $\cQ\text{-}\Coalg$ as follows.  The components of the $\cQ$-comultiplication on $C\otimes \mathrm{Cyl}(\uI) $ are given by the composites
{\small\[\xymatrix@C=15pt{C \boxtimes \mathrm{Cyl}(\uI)  \ar [rr]^-{\rho_{n}\boxtimes \rho'_{n}} &&\big(\cQ(n) \sm C^{\sm n}\big)\boxtimes\big (\cQ'(n) \otimes \mathrm{Cyl}(\uI) ^{\otimes n}\big)\ar[d]^{\tau}& \\
&& \big(\cQ(n) \boxtimes \cQ'(n)\big)\sm (C\boxtimes \mathrm{Cyl}(\uI) )^{\sm n}\ar [rr]^-{\varphi(n)\sm \id} &&\cQ(n) \sm (C\boxtimes \mathrm{Cyl}(\uI) )^{\otimes n},}\]}

\noindent where $\tau$ is the natural transformation encoding the op-monoidality of the functor $-\boxtimes -$. 

Existence of the desired model structure follows from Theorem \ref{thm:quillen-path}.
\end{proof}

\subsection{Adjoint squares}\label{section:adjoint_squares}

Let  $\TT=(T,\eta,\mu)$ be a monad and $\KK = (K, \epsilon,\delta)$ a comonad on a locally presentable category $\M$, equipped with a  distributive law $\chi\colon TK \Rightarrow KT$.  Recall that a \emph{distributive law} of a monad over a comonad is a natural transformation $\chi$ so that the diagrams 
\begin{equation}\label{eq:distributive-law} \vcenter{ \xymatrix@=10pt{ &&&  & & TKT \ar[drr]^{\chi T} \\ &  K \ar[ddl]_{\eta K} \ar[ddr]^{K\eta} && T^2 K \ar[urr]^{T\chi} \ar[ddr]_{\mu K} & & & & KT^2 \ar[ddl]^{K\mu} \\ \\  TK \ar[rr]^{\chi} \ar[ddr]_{T\epsilon} & & KT  \ar[ddl]^{\epsilon T}&& TK \ar[rr]^{\chi} \ar[ddl]_{T\delta} & & KT \ar[ddr]^{\delta T} \\ \\ & T& &  TK^2 \ar[drr]_{\chi K} & & & & K^2T   \\ &&&  & & KTK \ar[urr]_{K\chi}}}  
\end{equation} 
commute.
Let $\Alg^\TT(\M)$, $\Coalg_\KK(\M)$, and $\Bialg_\KK^\TT(\M)$ denote the categories of $\TT$-algebras, $\KK$-coalgebras, and $(\TT, \KK)$-bialgebras, respectively.  Note that the existence of the distributive law $\chi$ ensures that we can make sense of the notion of $(\TT, \KK)$-bialgebras.

There is an associated diagram of adjunctions
\[ \xymatrix@C=4pc@R=4pc{ \M \ar@{}[r]|{\perp} \ar@<-1ex>[d]_T \ar@<-1ex>[r]_{K} & \Coalg_\KK(\M)  \ar@<1ex>[d]^T \ar@<-1ex>[l]_V \\  \ar@{}[r]|{\top} \ar@{}[u]|{\dashv} \Alg^\TT(\M) \ar@<1ex>[r]^{K} \ar@<-1ex>[u]_U & \Bialg_\KK^\TT(\M) \ar@<1ex>[u]^U \ar@<1ex>[l]^V\ar@{}[u]|{\vdash} }\] 
in which the square of forgetful functors commutes, and in which the squares of left adjoints and of right adjoints commute up to natural isomorphism: $UV=VU$, $TV \cong VT$, and $UK \cong KU$. Here  $U$ and $V$ denote forgetful functors, $T$ is the free algebra functor, and  $K$ is the cofree coalgebra functor.

In general settings of this nature, the following result provides conditions under which the model structure can be transferred from $\M$ to the category $ \Bialg_\KK^\TT(\M)$ using the adjunctions above.

\begin{thm}\label{thm:square} Given a square of adjunctions 
\[ \xymatrix@R=4pc@C=4pc{ \K \ar@{}[r]|{\perp} \ar@<-1ex>[d]_L \ar@<-1ex>[r]_{R} & \M  \ar@<1ex>[d]^L \ar@<-1ex>[l]_V \\  
\ar@{}[r]|{\top} \ar@{}[u]|{\dashv} \N \ar@<1ex>[r]^{R} \ar@<-1ex>[u]_U & \P \ar@<1ex>[u]^U \ar@<1ex>[l]^V\ar@{}[u]|{\vdash} }\] 
between locally presentable categories, suppose that $\K$ has a model structure $(\Cof,\Fib,\WE)$ such that the right-induced model structure exists on $\N$, created by $U\colon \N \lra \K$, and that the left-induced model structure exists on $\M$, created by $V\colon \M \lra \K$. 

If $UV\cong VU$ and if $LV\cong VL$ (or, equivalently, $UR\cong RU$), then there exists a right-induced model structure on $\P$, created by $U\colon \P \lra \M$,  and a left-induced model structure on $\P$, created by $V\colon \P \lra \N$, so that the identity {is the left member of a Quillen equivalence from the right-induced model structure to the left-induced one:}
\[ \xymatrix@C=4pc@R=4pc{  \P_{\mathrm{right}} \ar@<1ex>[r]^-\id \ar@{}[r]|-\perp & \P_{\mathrm{left}}. \ar@<1ex>[l]^-\id}\]
\end{thm}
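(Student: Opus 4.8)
The plan is to reduce the existence of both induced model structures on $\P$ to the two acyclicity conditions of Proposition \ref{prop:acyclicity-reduction}, to verify those conditions by transposing lifting problems across the four adjunctions, and finally to obtain the Quillen equivalence for free from the fact that the two structures on $\P$ share the same weak equivalences. First I would unpack the Quillen adjunctions hidden in the hypotheses: since $\M$ carries the structure left-induced along $V\colon\M\to\K$, the functor $V$ preserves cofibrations and weak equivalences, so $V\dashv R$ is a Quillen pair and $R\colon\K\to\M$ is right Quillen; dually, since $\N$ carries the structure right-induced along $U\colon\N\to\K$, the functor $U$ preserves fibrations and weak equivalences, so $L\dashv U$ is a Quillen pair and $L\colon\K\to\N$ is left Quillen. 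As the model structure on $\K$ is accessible, its induced structures on $\M$ and $\N$ are again accessible by the Acyclicity Theorem; since $\P$ is locally presentable, the right-induced weak factorization systems along $U\colon\P\to\M$ and the left-induced ones along $V\colon\P\to\N$ all exist by the construction of Section \ref{sec:induced_awfs}. By Proposition \ref{prop:acyclicity-reduction} it then remains only to check acyclicity for each.

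For the right-induced structure on $\P$ I must show $\llp\,{U^{-1}\Fib}\subset U^{-1}\WE$, where $U\colon\P\to\M$. Given $f\in\llp\,{U^{-1}\Fib}$, the key step is to prove that $Vf$ lies in $\llp\,{U^{-1}\Fib}$ computed in $\N$ along $U\colon\N\to\K$: for any $g$ in $\N$ with $Ug$ a fibration in $\K$, the transpose across $V\dashv R$ turns the lifting problem $Vf\boxslash g$ into $f\boxslash Rg$ in $\P$, and $U(Rg)\cong R(Ug)$ is a fibration in $\M$ because $R$ is right Quillen, so $Rg\in U^{-1}\Fib$ and the lift exists. The acyclicity condition for the right-induced structure on $\N$, which holds by Proposition \ref{prop:acyclicity-reduction} since that structure exists, now gives $U(Vf)\in\WE$; via $UV\cong VU$ this reads $V(Uf)\in\WE$, i.e.\ $Uf$ is a weak equivalence in $\M$, so $f\in U^{-1}\WE$. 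The left-induced acyclicity $\rlp{(V^{-1}\Cof)}\subset V^{-1}\WE$ on $\P$ is established by the formally dual argument, transposing $c\boxslash Uf$ across $L\dashv U$ into $Lc\boxslash f$, using that $L$ is left Quillen together with $LV\cong VL$ to see that $V(Lc)$ is a cofibration in $\N$, and then invoking the left-induced acyclicity on $\M$ together with $UV\cong VU$.

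With both model structures on $\P$ in hand, I would finish quickly. The weak equivalences of $\P_{\mathrm{right}}$ are $(VU)^{-1}\WE$ and those of $\P_{\mathrm{left}}$ are $(UV)^{-1}\WE$, so $UV\cong VU$ makes the two classes coincide; a Quillen adjunction whose underlying functors are identities and whose two model structures share the same weak equivalences is automatically a Quillen equivalence. Thus it suffices to check that $\id\colon\P_{\mathrm{right}}\to\P_{\mathrm{left}}$ is left Quillen, and since the weak equivalences agree this amounts to the single inclusion $\llp\,{U^{-1}(\Fib\cap\WE)}\subset V^{-1}\Cof$ of cofibrations. This is proved by rerunning the transposition argument above with acyclic fibrations in place of fibrations: a cofibration $c$ of $\P_{\mathrm{right}}$ lifts against $Rg$ for every acyclic fibration $g$ of $\N$ because $U(Rg)\cong R(Ug)$ is an acyclic fibration of $\M$, whence $Vc$ is a cofibration of $\N$ and so $c\in V^{-1}\Cof$. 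Accessibility of the two resulting structures on $\P$ is again supplied by the Acyclicity Theorem.

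The step I expect to be the main obstacle is the verification of the two acyclicity conditions on $\P$: this is the only place where the commutativity hypotheses $UV\cong VU$ and $UR\cong RU$ (equivalently $LV\cong VL$) must be combined precisely with the Quillen property of $R$ and $L$ and with the behaviour of the adjunctions on lifting problems, and keeping the variances and adjoint transposes correctly aligned is the genuinely delicate bookkeeping. Everything else --- the existence of the weak factorization systems and the concluding Quillen-equivalence argument --- is formal once accessibility is available and the coincidence of weak equivalences has been noted.
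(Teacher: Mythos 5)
Your proof is correct and follows essentially the same route as the paper's: both verify the two acyclicity conditions on $\P$ by showing that $V$ (resp.\ $U$) carries the relevant left (resp.\ right) classes into those of $\N$ (resp.\ $\M$), using the transposition of lifting problems across the adjunctions together with $UR\cong RU$ and the Quillen property of $R$ and $L$ --- your explicit element-wise transposition is exactly the ``adjoint lifting property argument'' the paper invokes at the level of whole classes. The concluding observation that coinciding weak equivalences upgrade the Quillen adjunction of identities to a Quillen equivalence matches the paper's conclusion as well.
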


\begin{proof}
Denote the weak factorization systems in the left- and right-induced model structures on $\P$ by
\begin{align*} &\P_{\mathrm{left}}: && \big(V^{-1}\Cof_\N, \rlp{(V^{-1}\Cof_\N)} \big) &\text{} & \big(V^{-1}(\Cof_\N \cap \WE_\N), \rlp{(V^{-1}(\Cof_\N \cap \WE_\N))} \big) \\  &\P_{\mathrm{right}}: &&  \big(\llp\,{U^{-1}(\Fib_\M\cap\WE_\M)}, U^{-1}(\Fib_\M\cap\WE_\M) \big) &\text{} & \big(\llp\,{U^{-1}\Fib_\M}, U^{-1}\Fib_\M \big) 
\end{align*} 
The left classes in the weak factorization systems on $\P_{\mathrm{left}}$ are created by the left adjoint $V \colon \P_{\mathrm{left}} \lra \N$. Dually, the right classes in the weak factorization systems on $\P_{\mathrm{right}}$ are created by the right adjoint $U \colon \P_{\mathrm{right}} \lra \M$.

Thus, any functor whose codomain is $\P_{\mathrm{left}}$ preserves left classes of weak factorization systems if and only if the composite with $V$ does so. By a standard adjoint lifting property argument, the composite $P_{\mathrm{right}} \xrightarrow{\id} P_{\mathrm{left}} \xrightarrow{V} \N$ preserves the left classes of weak factorization systems if and only if its right adjoint $\N \xrightarrow{R} \P_{\mathrm{left}} \xrightarrow{\id} P_{\mathrm{right}}$ preserves right classes. But because $U$ creates the right classes, this is the case if and only if the composite right adjoint
\[ \xymatrix@C=4pc@R=4pc{ \M \ar@<1ex>[r]^-L \ar@{}[r]|-{\perp} & \P_{\text{right}} \ar@<1ex>[l]^-{U} \ar@<1ex>[r]^-\id \ar@{}[r]|-\perp & \P_{\text{left}} \ar@<1ex>[l]^-\id \ar@<1ex>[r]^-V \ar@{}[r]|-{\perp} & \N \ar@<1ex>[l]^-{R}}\]  preserves the right classes. Using $UR\cong RU$, this follows from the fact that $RU$ is right Quillen (or using $VL\cong LV$, this follows from the fact that $LV$ is left Quillen and the adjoint lifting property argument). In conclusion, the left classes of the right-induced weak factorization systems on $\P$ are contained in the left classes of the left-induced weak factorization systems, and dually, the right classes of the left-induced weak factorization systems are contained in the right classes of the right-induced weak factorization systems.

To establish the model structures it remains to check the acyclicity conditions. For $\P_{\mathrm{right}}$, we want to show that 
$$\llp\,{U^{-1}\Fib_\M}\subset U^{-1}\WE_\M = U^{-1}V^{-1}\WE = V^{-1}U^{-1}\WE,$$
which is a consequence of 
$$\llp\,{U^{-1}\Fib_\M} \subset V^{-1}(\Cof_\N \cap \WE_\N) \subset V^{-1}\WE_\N = V^{-1}U^{-1}\WE,$$
which we established above.

Dually, for $\P_{\mathrm{left}}$, we want to show that 
$$ \rlp{(V^{-1}\Cof_\N)}\subset V^{-1}\WE_\N =V^{-1}U^{-1}\WE= U^{-1}V^{-1}\WE,$$ 
which follows from
$$ \rlp{(V^{-1}\Cof_\N)} \subset U^{-1}(\Fib_\M\cap\WE_\M) \subset U^{-1}\WE_\M = U^{-1}V^{-1}\WE,$$
again established above.
\end{proof}

\begin{rmk} We have not yet found appealing and useful sufficient conditions guaranteeing that the right and left model structures defined above are the same, but we do provide examples in this paper where the structures are distinct (Proposition \ref{prop:distinct}) and where they are the same (Proposition \ref{prop:reedy_m_s}).
\end{rmk}

\begin{rmk}
By a theorem of Jardine \cite{Jardine}, if there are two model structures with the same weak equivalences and $\Cof_1 \subset \Cof_2$ (hence $\Fib_2 \subset \Fib_1$), then for any intermediate weak factorization system $(\cL,\cR)$, meaning $\Cof_1 \subset \cL \subset \Cof_2$, there is a third model structure with the same weak equivalences and with cofibrations equal to $\cL$ (or dually for a weak factorization system intermediate to the fibrations).  It follows that if the two model structures on $\P$ created in Theorem \ref{thm:square} are indeed distinct, then they may give rise to an interesting family of intermediate model structures. 
\end{rmk}

\begin{cor}\label{cor:monad_comonad} Let $\M $ be a locally presentable model category, and let $\TT$ and $\KK$ be an accessible monad  and an accessible comonad on $\M$ such that there exists a  distributive law $TK \Rightarrow KT$. If $V\colon \Coalg_\KK(\M) \lra \M$ creates a left-induced model structure on $\Coalg_\KK(\M)$,   and $U\colon \Alg^\TT(\M) \lra \M$ creates a right-induced model structure on $\Alg^\TT(\M)$, then there exist left- and right-induced model structures on the category $\Bialg_\KK^\TT(\M)$, created by $V\colon \Bialg_\KK^\TT(\M) \lra  \Alg^\TT(\M)$ and $U\colon  \Bialg_\KK^\TT(\M) \lra  \Coalg_\KK(\M)$, respectively.
\end{cor}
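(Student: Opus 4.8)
The plan is to recognize the statement as a direct instance of Theorem \ref{thm:square}, applied to the square of adjunctions associated to the monad--comonad--distributive-law data displayed immediately before the corollary. I would set up the dictionary sending Theorem \ref{thm:square}'s base $\K$ to $\M$, its left-induced category (called $\M$ in the theorem) to $\Coalg_\KK(\M)$, its right-induced category $\N$ to $\Alg^\TT(\M)$, and its category $\P$ to $\Bialg_\KK^\TT(\M)$. Under this translation the forgetful functor $V$ forgets the coalgebra structure, the forgetful functor $U$ forgets the algebra structure, the left adjoint $L$ becomes the free-algebra functor $T$, and the right adjoint $R$ becomes the cofree-coalgebra functor $K$. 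With these identifications the two model-structure hypotheses of Theorem \ref{thm:square} --- that the right-induced structure exists on $\N$ and the left-induced structure exists on the theorem's $\M$ --- are precisely our two standing hypotheses on $U\colon \Alg^\TT(\M)\to\M$ and $V\colon \Coalg_\KK(\M)\to\M$.

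First I would check that all four categories are locally presentable, which is the only genuinely technical point. This holds for $\M$ by assumption; for $\Alg^\TT(\M)$ it follows from accessibility of $\TT$ by \cite[2.78]{adamek-rosicky}, and for $\Coalg_\KK(\M)$ from accessibility of $\KK$ via the criterion of \cite[Proposition A.1]{ching-riehl}. For $\Bialg_\KK^\TT(\M)$ I would use the distributive law $\chi$ to lift $\TT$ to a monad $\hat\TT$ on the locally presentable category $\Coalg_\KK(\M)$, under which $\Bialg_\KK^\TT(\M)\cong\Alg^{\hat\TT}(\Coalg_\KK(\M))$; since the forgetful functor $\Coalg_\KK(\M)\to\M$ preserves sufficiently filtered colimits (being comonadic with $K$ accessible), the lifted $\hat\TT$ is again accessible, and \cite[2.78]{adamek-rosicky} applies once more. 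Symmetrically one could lift $\KK$ to a comonad on $\Alg^\TT(\M)$ and invoke the coalgebra criterion instead.

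It then remains to verify the commutativity hypotheses of Theorem \ref{thm:square}, namely $UV\cong VU$ and $LV\cong VL$ (equivalently $UR\cong RU$). Translated through the dictionary these read $UV=VU$, $TV\cong VT$, and $UK\cong KU$, and all three are exactly the relations recorded in the discussion of the associated square preceding the corollary: the square of forgetful functors commutes strictly, while the squares of left and of right adjoints commute up to the canonical natural isomorphisms furnished by $\chi$. With every hypothesis in hand, Theorem \ref{thm:square} produces the right-induced model structure on $\Bialg_\KK^\TT(\M)$ created by $U\colon \Bialg_\KK^\TT(\M)\to\Coalg_\KK(\M)$ and the left-induced model structure created by $V\colon \Bialg_\KK^\TT(\M)\to\Alg^\TT(\M)$, as claimed (and, as a bonus, that the identity is a left Quillen equivalence between them). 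The main obstacle is the local-presentability bookkeeping for the coalgebra and bialgebra categories: unlike algebra categories these are not handled by a naive monadicity argument, and duality does not preserve local presentability, so one must genuinely lean on the accessibility of $\KK$ (and of the lifted $\hat\TT$) together with the Ching--Riehl and Adámek--Rosický criteria; once these are secured, the remainder is a purely formal matching of the two squares.
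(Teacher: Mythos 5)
Your proposal is correct and matches the paper's (implicit) proof exactly: the corollary is stated as a direct application of Theorem \ref{thm:square} to the square of adjunctions set up at the start of \S\ref{section:adjoint_squares}, with precisely the dictionary and the commutation relations $UV=VU$, $TV\cong VT$, $UK\cong KU$ you describe. Your local-presentability bookkeeping for $\Coalg_\KK(\M)$ and $\Bialg_\KK^\TT(\M)$ (via the Ching--Riehl criterion and the lifted accessible monad $\hat\TT$) is in fact more explicit than what the paper records.
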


\section{Left and right induced algebraic weak factorization systems}\label{sec:induced_awfs}

A model structure on a category is comprised of a pair of interacting weak factorization systems: one whose left class defines the acyclic cofibrations and whose right class defines the fibrations and another whose left class is the cofibrations and whose right class is the acyclic fibrations. With the exception of a few, extremely pathological situations, these weak factorization systems can be promoted to \emph{algebraic weak factorization systems}, highly structured objects with superior categorical properties. An algebraic weak factorization system is \emph{accessible} if the underlying category is locally presentable and if its functorial factorization is accessible (preserves sufficiently large filtered colimits).

As shown by Bourke and Garner \cite{bourke-garner}, any accessible algebraic weak factorization system can be ``left-induced'' along any left adjoint between locally presentable categories and ``right-induced'' along any right adjoint between locally presentable categories. The resulting algebraic weak factorization systems in each case are again accessible, so this induction process can be repeated as necessary.

Cofibrantly generated weak factorization systems are a special case of accessible algebraic weak factorization systems. In this case, the right induction is classical, and the left induction is a recent result of Makkai-Rosick\'{y} \cite{makkai-rosicky}. Our particular interest here is in accessible algebraic weak factorization systems that are not cofibrantly generated, at least in the classical sense. Our primary source of examples will be \emph{enriched cofibrantly generated} algebraic weak factorization systems introduced in \cite[Chapter 13]{riehl-cathtpy}. Classical examples of these include the weak factorization systems for the Hurewicz model structures on chain complexes over a commutative ring or modules for a differential graded algebra in such. Other examples include certain weak factorization systems on diagram categories. These will be discussed in Section \ref{sec:enriched-cof-gen}.

In this section, we give an expository presentation of the theory of accessible algebraic weak factorization systems and recent work of Bourke and Garner, which we expect to be totally unfamiliar to most readers. In \S\ref{ssec:accessible}, we introduce algebraic weak factorization systems and accessible model categories. In \S\ref{ssec:recognizing}, we describe a key theorem of Bourke and Garner, which allows one to recognize algebraic weak factorization systems ``in the wild'' --- in particular, without presenting an explicit functorial factorization. A straightforward application of this theorem, reviewed in \S\ref{ssec:inducing}, allows Bourke and Garner to prove that accessible algebraic weak factorization systems can be left- or right-induced along any adjunction between locally presentable categories. The Acyclicity Theorem, mentioned in the introduction, is a trivial corollary of their result: in the case of an adjunction between a locally presentable category and an accessible model category, this result implies that left- or right-induced weak factorization systems always exist. {The one original contribution in this section appears in \S\ref{ssec:injective}, where we apply the Acyclicity Theorem to prove that the category of $\D$-indexed diagrams valued in an accessible model category admits both projective and injective model structures, modernizing an argument that first appeared in \cite{riehl-algebraic}.}

\subsection{Algebraic weak factorization systems and accessible model categories}\label{ssec:accessible}

A weak factorization system $(\cL,\cR)$ on a category $\D$ underlies an \emph{algebraic weak factorization system} if it admits a particularly nice functorial factorization. A \emph{functorial factorization} is given by a pair of endofunctors $L,R \colon \D^\2 \lra \D^\2$ of the category of arrows and commutative squares in $\D$ so that $\mathrm{dom}\cdot L = \mathrm{dom}$, $\mathrm{cod}\cdot R = \mathrm{cod}$, $\mathrm{cod}\cdot L = \mathrm{dom} \cdot R$, and $f = Rf \cdot Lf$ for any $f \in \mor\D$. These endofunctors are ``pointed'': there exist natural transformations $\vec{\epsilon} \colon L \Rightarrow \id$ and $\vec{\eta} \colon \id \Rightarrow R$ whose components rearrange the factorization $f = Rf \cdot Lf$ around the edges of a commutative square:
\begin{equation}\label{eq:pointed-factorization} \xymatrix{ \ar@{}[d]|{\displaystyle\vec{\epsilon}_f :=} & \cdot \ar[d]_{Lf} \ar@{=}[r] & \cdot \ar[d]^f \\ &  \cdot \ar[r]_{Rf} & \cdot}\qquad \qquad \xymatrix{\ar@{}[d]|{\displaystyle\vec{\eta}_f :=} &  \cdot \ar[r]^{Lf} \ar[d]_f & \cdot \ar[d]^{Rf} \\ &  \cdot \ar@{=}[r] & \cdot}\end{equation}

\begin{defn} An \emph{algebraic weak factorization system} $(\LL,\RR)$ on $\D$ consists of a comonad $\LL  = (L,\vec{\epsilon},\vec{\delta})$ and a monad $\RR = (R,\vec{\eta}, \vec{\mu})$ on $\D^\2$ whose underlying pointed endofunctors define a functorial factorization and in which the canonical map $LR \Rightarrow RL$ is a distributive law.\footnote{Here the comonad distributes over the monad, and a distributive law is a natural transformation satisfying diagrams dual to those of \eqref{eq:distributive-law}.}
\end{defn}

The comonad and monad of an algebraic weak factorization system $(\LL,\RR)$ have associated categories of coalgebras $U\colon \coalg{\LL} \lra \D^\2$ and algebras $U \colon \alg{\RR} \lra \D^\2$, equipped with forgetful functors to the category of arrows (drawn ``vertically'') and commutative squares in $\D$. The $\LL$-coalgebras and $\RR$-algebras give an algebraic presentation of the left and right classes of the underlying ordinary weak factorization system, which are defined to be the retract closures of the images of these forgetful functors.  Importantly, an $\LL$-coalgebra lifts against any $\RR$-algebra in a canonical way, and these canonical solutions to lifting problems are preserved by $\LL$-coalgebra and $\RR$-algebra morphisms. In this sense, the lifting properties of the weak factorization system have been made ``algebraic.''

\begin{defn} An algebraic weak factorization system $(\LL,\RR)$ on $\D$ is \emph{accessible} if $\D$ is locally presentable and if the functors $L,R \colon \D^\2 \lra \D^\2$ are accessible, i.e., preserve $\lambda$-filtered colimits for some regular cardinal $\lambda$.
\end{defn}

 Because colimits in the diagram category $\D^\2$ are defined objectwise, a functorial factorization $L,R$, such as provided by an algebraic weak factorization system, is accessible if and only if the functor $E = \mathrm{cod}\cdot L = \mathrm{dom}\cdot R \colon \D^\2 \lra \D$ that sends an arrow to the object through which it factors is accessible.

\begin{prop}[{Garner \cite[16]{bourke-garner}}]\label{prop:garner-SOA} Any set of arrows $\J$ in a locally presentable category $\D$ generates an accessible algebraic weak factorization system whose underlying ordinary weak factorization system is the usual weak factorization system cofibrantly generated by $\J$.
\end{prop}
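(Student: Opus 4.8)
The plan is to run Garner's small object argument \cite{garner-understanding}, which refines Quillen's by replacing a transfinite composite with a free monad construction that converges precisely because $\D$ is locally presentable. First I would extract from $\J$ a one-step functorial factorization. Given $f \colon A \to B$, form the coproduct $\coprod j$ of the arrows $j \in \J$ indexed over the set $\mathrm{Sq}(\J,f)$ of commutative squares from some $j$ to $f$; the canonical square out of $\coprod j$ factors $f$ through the pushout $E_1 f$ of $\coprod \mathrm{dom}(j) \to A$ along $\coprod j$, producing $A \xrightarrow{L_1 f} E_1 f \xrightarrow{R_1 f} B$. This is functorial in $f$ and equips $R_1$ with the structure of a pointed endofunctor of $\D^\2$ via a natural transformation $\vec\eta^1 \colon \id \Rightarrow R_1$. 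The crucial bookkeeping observation is that an algebra for the pointed endofunctor $R_1$ --- a map $g$ together with a retraction of $\vec\eta^1_g$ --- is precisely a map equipped with a coherent choice of solution to every lifting problem against $\J$; in particular the underlying maps of such algebras are exactly $\rlp{\J}$.

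The key step is to pass from the pointed endofunctor $R_1$ to the \emph{algebraically-free monad} $\RR = (R,\vec\eta,\vec\mu)$ it generates on $\D^\2$. Here I would invoke the transfinite construction of free algebras: since $\D$ is locally presentable, $R_1$ is accessible (it is assembled from coproducts and a single pushout, and the domains of the arrows in the \emph{set} $\J$ are presentable, so the indexing colimits are controlled), and Garner shows that the associated free-monad sequence stabilizes at some ordinal stage. The resulting monad $\RR$ is accessible and, by the defining property of the algebraically-free monad, its Eilenberg--Moore category $\alg{\RR}$ coincides with the category of $R_1$-algebras; hence the underlying maps of $\RR$-algebras are again exactly $\rlp{\J}$.

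It then remains to produce the comonad half and the distributive law. Garner's theorem is that, because $R_1$ arose from a functorial factorization, the free monad $\RR$ automatically comes paired with a comonad $\LL = (L,\vec\epsilon,\vec\delta)$ on the same functorial factorization, and that the canonical natural transformation $LR \Rightarrow RL$ satisfies the distributive-law axioms; thus $(\LL,\RR)$ is a genuine algebraic weak factorization system in the sense of the definition above. For the underlying ordinary weak factorization system, the right class is the retract closure of the underlying maps of $\RR$-algebras, namely $\rlp{\J}$ (which is already retract-closed), and the left class is correspondingly $\llp{(\rlp{\J})}$; this is by definition the weak factorization system cofibrantly generated by $\J$. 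Finally, accessibility of $(\LL,\RR)$ follows once $R$, and hence $E = \mathrm{cod}\cdot L = \mathrm{dom}\cdot R$, is seen to be accessible.

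The step I expect to be the main obstacle is the convergence of the free-monad construction and the verification that it yields an algebraic weak factorization system rather than merely a weak factorization system. Quillen's classical argument iterates $L_1$ by transfinite composition and needs a smallness hypothesis only to ensure the right factor eventually lands in $\rlp{\J}$; it produces no canonical monad structure. Garner's improvement is exactly to build the free monad on $R_1$, and the work lies in showing --- via local presentability, which guarantees the requisite accessibility and the good behaviour of filtered colimits --- that this transfinite process terminates, and that the coherence of the canonical lifts is preserved along algebra and coalgebra morphisms, so that the resulting structure is genuinely algebraic. All of this is carried out in \cite{garner-understanding}; I would cite it for the convergence and content myself with identifying the algebras and the underlying classes as above.
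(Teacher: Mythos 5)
Your proposal is correct and follows essentially the same route as the paper's own proof sketch: both defer to Garner's modified small object argument in \cite{garner-understanding} for convergence (using the fact that all objects of a locally presentable category are small) and for the algebraic structure, and both identify the $\RR$-algebras with maps equipped with coherent chosen lifts against $\J$ (the category $\J^{\boxslash}$) to conclude that the underlying weak factorization system is the one cofibrantly generated by $\J$ and that the factorization is accessible. Your write-up simply unpacks the one-step factorization and free-monad stages in more detail than the paper does.
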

\begin{proof}[Proof sketch]
In  a locally presentable category, all objects are small: every covariant representable functor preserves filtered colimits for a sufficiently large ordinal. Taking advantage of this smallness condition, Garner shows that a modified version of Quillen's small object argument, which avoids ``attaching redudant cells,'' eventually converges to define the functorial factorization for an algebraic weak factorization system $(\LL,\RR)$  cofibrantly generated by $\J$ \cite[4.22]{garner-understanding}. It follows that the functorial factorization so-constructed is accessible; see \cite[16]{bourke-garner} or  \cite[1.3]{ching-riehl}.

Here \emph{cofibrant generation} means that the category $\alg{\RR}$ is isomorphic over $\D^\2$ to the category $\J^\boxslash$ whose objects are morphisms in $\D$ together with specified solutions to any lifting problem against any morphism in $\J$ and whose maps are commutative squares preserving these chosen solutions. This implies in particular that the underlying weak factorization system $(\cL,\cR)$ of $(\LL,\RR)$ is the one cofibrantly generated by $\J$. 
\end{proof}

\begin{rmk} Proposition \ref{prop:garner-SOA} applies more generally to any small category $\J$ of generating arrows, in which case the objects of $\J^\boxslash\cong\alg{\RR}$ are morphisms in $\D$ equipped with solutions to all lifting problems against objects of $\J$ that are natural with respect to the morphisms in $\J$.
\end{rmk}

\begin{defn} An \emph{accessible model structure} is a model structure $(\M,\Fib,\Cof,\WE)$ so that $\M$ is locally presentable and the weak factorization systems $(\Cof \cap \WE,\Fib)$ and  $(\Cof ,\Fib \cap \WE)$ are accessible.\end{defn}

As an immediate corollary of Proposition \ref{prop:garner-SOA} we obtain the following important class of examples.

\begin{cor} Any combinatorial model structure can be equipped with a pair of accessible algebraic weak factorization systems, and thus defines an accessible model structure.
\end{cor}

\begin{rmk} A priori our definition of accessible model category is more rigid than the one introduced by Rosick\'y in \cite{rosicky-accessible}, who asks only  that the constitutent weak factorization systems admit functorial factorizations that are accessible. However, he shows that any accessible weak factorization system on a locally presentable category is cofibrantly generated by a small category. In this context, Garner's small object argument produces an accessible weak factorization system, so the two definitions in fact agree.
\end{rmk}

\subsection{Recognizing algebraic weak factorization systems}\label{ssec:recognizing}

The $\LL$-coalgebras and $\RR$-algebras most naturally assemble into (strict) double categories $\CCoalg{\LL}$ and $\AAlg{\RR}$. Objects and horizontal arrows are just objects and arrows of $\D$. Vertical arrows are $\LL$-coalgebras (respectively, $\RR$-algebras) and squares are commutative squares (maps in $\D^\2$) that lift to maps of $\LL$-coalgebras (respectively, $\RR$-algebras). These double categories are equipped with forgetful double functors $\UU \colon \CCoalg{\LL} \lra \Sq{\D}$ and $\UU \colon \AAlg{\LL} \lra \Sq{\D}$ to the double category of objects, morphisms, morphisms, and commutative squares in $\D$. Recalling that double categories are categories internal to $\CAT$, the diagrams displayed on the left and on the right below encode the data of the forgetful double functors $\CCoalg{\LL} \lra \Sq{\D}$ and $\AAlg{\LL} \lra \Sq{\D}$ respectively
\[ \xymatrix@C=4pc@R=2pc{ \CCoalg{\LL} \ar[r]^\UU & \Sq{\D} & \AAlg{\RR} \ar[r]^\UU & \Sq{\D} \\
\coalg{\LL} \pbtimes{\D} \coalg{\LL} \ar[d]_{\circ} \ar[r]^-{U \pbtimes{1} U} & \D^\3 \ar[d]^\circ & \alg{\RR} \pbtimes{\D} \alg{\RR} \ar[r]^-{U \pbtimes{1} U} \ar[d]_\circ & \D^\3 \ar[d]^\circ \\ \coalg{\LL} \ar@<-1ex>[d]_s \ar@<1ex>[d]^t \ar@{<-}[d]|i \ar[r]^U & \D^\2 \ar@<-1ex>[d]_s \ar@<1ex>[d]^t \ar@{<-}[d]|i  & \alg{\RR} \ar@<-1ex>[d]_s \ar@<1ex>[d]^t \ar@{<-}[d]|i  \ar[r]^U & \D^\2 \ar@<-1ex>[d]_s \ar@<1ex>[d]^t \ar@{<-}[d]|i  \\ \D \ar[r]_1 & \D &\D \ar[r]_1 & \D }\]

\begin{ex} If $(\LL,\RR)$ is a cofibrantly generated algebraic weak factorization system (produced by the Garner small object argument applied to a set or small category $\J$), then $\AAlg{\RR}$ is isomorphic to the double category $\J^\boxslash$ whose vertical arrows are maps with chosen (natural) solutions to lifting problems against $\J$ and whose commutative squares preserve such chosen lifts.
\end{ex}

Conversely, each of the double categories $\coalg{\LL}$ and $\alg{\RR}$ encodes the entire structure of the algebraic weak factorization system. There are many expressions of this fact, but the most useful for our purposes is the following theorem of Bourke and Garner. A double category with a forgetful double functor to $\Sq\D$ is \emph{concrete} if the object component of this functor is an identity and if the arrow component is faithful. For example, the double categories $\CCoalg{\LL}$ and $\AAlg{\RR}$ are concrete:  the arrow components are the faithful functors $U \colon \coalg{\LL} \lra \D^\2$ and $U \colon \alg{\RR} \lra \D^\2$.

\begin{thm}[{\cite[6]{bourke-garner}}]\label{thm:bourke} A concrete double category $\AA$ with a forgetful double functor $\UU \colon \AA \lra \Sq\D$ encodes the double category of coalgebras for an algebraic weak factorization system on $\D$  if and only if 
\begin{enumerate}
\item[(i)] the functor $U \colon \A_1 \lra \D^\2$ on arrows is comonadic, and
\item[(ii)] for every vertical arrow $f \colon a \lra b$ in $\AA$, the square 
\[ \xymatrix{ a \ar[r]^1 \ar[d]_1 & a \ar[d]^f \\ a \ar[r]_f & b}\] is in $\AA$.
\end{enumerate} Dually, a concrete double category $\AA$ with a forgetful double functor $\UU \colon \AA \lra \Sq\D$  encodes the double category of algebras for an algebraic weak factorization system on $\D$  if and only if 
\begin{enumerate}
\item[(i)] the functor $U \colon \A_1 \lra \D^\2$ on arrows is monadic, and
\item[(ii)] for every vertical arrow $f \colon a \lra b$ in $\AA$, the square 
\[ \xymatrix{ a \ar[r]^f \ar[d]_f & b \ar[d]^1 \\ b \ar[r]_1 & b}\] is in $\AA$.
\end{enumerate}
\end{thm}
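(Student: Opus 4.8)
The plan is to prove the coalgebra half of the statement and to deduce the algebra half by the formal duality that replaces $\D$ with $\D^\op$: passing to the opposite category identifies $(\D^\op)^\2$ with $(\D^\2)^\op$ (interchanging $\mathrm{dom}$ and $\mathrm{cod}$), monads with comonads, and $\AAlg\RR$ with $\CCoalg\LL$, carrying the monadicity hypothesis (i) and the square (ii) for algebras to the comonadicity hypothesis and square for coalgebras. I would therefore prove the coalgebra characterization as a biconditional, treating necessity and sufficiency in turn.

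For necessity, suppose $\AA \cong \CCoalg\LL$ for an algebraic weak factorization system $(\LL,\RR)$. Then (i) is the standard fact that the forgetful functor $U\colon \coalg\LL \to \D^\2$ from the coalgebras of a comonad is comonadic, its right adjoint being the cofree-coalgebra functor $f \mapsto (Lf, \vec\delta_f)$. For (ii), I would check directly that the displayed commutative square, read as the morphism $(1_a, f)\colon 1_a \Rightarrow f$ of $\D^\2$, is a morphism of $\LL$-coalgebras from the canonical structure on the vertical identity $1_a$ to the given coalgebra $f$; this is a short diagram chase, reducing to the identity $s_f \cdot f = Lf$ that expresses commutativity of the coalgebra structure square of $f$ (equivalently, that the coalgebra structure map of $f$ is the identity on domains).

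For sufficiency --- the substantive direction --- I would argue in three stages. First, comonadicity (i) produces a comonad $\LL = (L, \vec\epsilon, \vec\delta)$ on $\D^\2$, right adjoint to $U$, with an isomorphism $\A_1 \cong \coalg\LL$ over $\D^\2$. Second, I would show that condition (ii) makes $\LL$ a comonad over the domain projection $\mathrm{dom}\colon \D^\2 \to \D$, i.e. $\mathrm{dom}\cdot L = \mathrm{dom}$ with $\vec\epsilon$ the identity on domains; a comonad over $\mathrm{dom}$ is exactly a functorial factorization together with a comonad structure on its left functor, so the codomain-component of $\vec\epsilon_f$ furnishes a right factor $Rf$ with $f = Rf\cdot Lf$. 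Here the point is that the square in (ii), being a coalgebra $2$-cell out of the double-category vertical unit $1_a$, pins down the domain component of every coalgebra structure, hence of $\vec\epsilon$, as an identity. Third, I would use the remaining double-category structure of $\AA$ --- the vertical composition of coalgebras, subject to the double-category coherence axioms and compatible with $\UU\colon \AA \to \Sq\D$ --- to manufacture the monad $\RR = (R, \vec\eta, \vec\mu)$ (the unit $\vec\eta_f = (Lf, 1)$ being forced by the factorization) together with a distributive law $\chi\colon LR \Rightarrow RL$, and to verify that $(\LL, \RR)$ is an algebraic weak factorization system. Finally I would confirm that the reconstructed $\CCoalg\LL$ coincides with $\AA$ as a concrete double category over $\Sq\D$: the arrow categories agree by the first stage, and the units, horizontal and vertical composites, and squares match by construction.

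I expect the main obstacle to be the third stage: extracting the monad $\RR$, and especially the distributive law $\chi\colon LR \Rightarrow RL$, from the way coalgebras compose in $\AA$, and then checking the complete list of comonad, monad, and distributivity axioms. Comonadicity alone delivers only the comonad $\LL$ and, via (ii), its functorial factorization; a functorial-factorization comonad does not by itself determine an algebraic weak factorization system. The genuinely new content --- and the reason the hypothesis is phrased in terms of double categories rather than mere categories --- is that the coherent vertical composition of coalgebras is precisely the data of a compatible monad and distributive law. Verifying this equivalence, and the uniqueness of the resulting algebraic weak factorization system, is where the weight of the argument lies.
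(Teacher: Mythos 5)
The paper offers no proof of this statement: it is imported verbatim from Bourke and Garner, and the only in-text discussion is the expository paragraph that follows it, explaining how the functorial factorization is recovered from (i) and (ii). So the comparison can only be with the argument in the cited source. Against that standard, your architecture is the correct one and is essentially the one Bourke and Garner use: duality handles one half; necessity is a routine check; and for sufficiency, comonadicity yields the comonad $\LL$, while condition (ii) lets one replace it by an isomorphic comonad satisfying $\mathrm{dom}\cdot L = \mathrm{dom}$ with identity-on-domains counit (this is the ``elementary categorical lemma'' the paper alludes to after the statement --- note that the comonad produced by comonadicity has this property only up to isomorphism, a strictification your sketch elides), whence a functorial factorization.

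The genuine gap is your third stage, which is announced rather than executed, and which \emph{is} the theorem. You must actually construct $\vec{\mu}_f \colon RRf \Rightarrow Rf$ --- concretely, its domain component $E(Rf) \lra Ef$ is extracted from the vertical composite in $\AA$ of the coalgebras $(Lf,\vec{\delta}_f)$ and $(L(Rf),\vec{\delta}_{Rf})$, which are composable because $\mathrm{cod}(Lf) = Ef = \mathrm{dom}(L(Rf))$ --- then verify the monad axioms using the unit, associativity, and interchange laws of the double category, check that the induced $LR \Rightarrow RL$ is a distributive law, and finally show that the vertical composition on $\CCoalg{\LL}$ determined by the resulting algebraic weak factorization system coincides with the composition you started from in $\AA$; your closing claim that the composites ``match by construction'' assumes exactly this last point rather than proving it. None of these verifications is formal or short, and since the content of the theorem is precisely that the double-categorical composition law is equivalent data to the monad together with the distributive law, a proof that stops where yours does has established only the easy portion of the sufficiency direction.
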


The advantage of Theorem \ref{thm:bourke} is that it makes it possible to recognize algebraic weak factorization systems ``in the wild'' --- in particular, without a specific functorial factorization in mind. Here is how the functorial factorization is recovered. Using condition (i), the monadic functor $U \colon \A_1 \lra \D^\2$ induces a monad $R' \colon \D^\2 \lra \D^2$, the composite of $U$ with its left adjoint. By an elementary categorical lemma that makes use of condition (ii), $R'$ is  isomorphic to a monad $R \colon \D^\2 \lra \D^2$ with the property that $\mathrm{cod} \cdot R = \mathrm{cod}$. In particular, $R$ is ``pointed,'' encoding a functorial factorization whose left factor can be recovered from the unit component, as displayed in \eqref{eq:pointed-factorization}.

\subsection{Left and right induction}\label{ssec:inducing}

The following corollaries of Theorem \ref{thm:bourke}, observed by Bourke and Garner, demonstrate the existence of left- and right-induced algebraic weak factorization systems along adjunctions between locally presentable categories. These will be used to construct new model structures of interest.

\begin{thm}[Left induction {\cite[13]{bourke-garner}}]\label{thm:left-awfs} Let $K \colon \C \lra \D$ be a cocontinuous functor between locally presentable categories, and suppose that $\D$ has an accessible algebraic weak factorization system $(\LL,\RR)$. Then the pullback
\[ \xymatrix{ \AA \ar[d]_{\tilde{\UU}} \ar[r]^-{\tilde{K}} \ar@{}[dr]|(.2){\lrcorner} & \CCoalg{\LL} \ar[d]^\UU \\ \Sq{\C} \ar[r]_{\Sq{K}} & \Sq{\D}}\] encodes an accessible algebraic weak factorization system on $\C$, whose underlying weak factorization system is left-induced from the underlying weak factorization system on $\D$.
\end{thm}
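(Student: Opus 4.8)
The plan is to verify the two hypotheses of the recognition Theorem~\ref{thm:bourke}, thereby exhibiting the pullback double category $\AA$ as the double category of coalgebras $\CCoalg{\LL'}$ for an algebraic weak factorization system $(\LL',\RR')$ on $\C$. Since a strict pullback of double categories is computed levelwise, $\AA$ has object category $\C$ (the pullback $\C\times_{\D}\D$ along $K$ and the identity), and its category of vertical arrows and squares is $\A_1=\C^{\2}\times_{\D^{\2}}\coalg{\LL}$, whose objects are morphisms $f$ of $\C$ equipped with an $\LL$-coalgebra structure on $Kf$. Because $U\colon\coalg{\LL}\lra\D^{\2}$ is faithful, so is the projection $\tilde U\colon\A_1\lra\C^{\2}$, whence $\AA$ is concrete over $\Sq{\C}$. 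Condition (ii) of Theorem~\ref{thm:bourke} is immediate: $\Sq{K}$ carries the square with two identity edges on $f$ to the corresponding square on $Kf$, which lies in $\CCoalg{\LL}$ since the latter satisfies (ii), and the pullback then places the original square in $\AA$. Everything thus reduces to condition (i): that $\tilde U$ is comonadic.

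For comonadicity I would apply Beck's theorem in its comonadic (dual) form, checking that $\tilde U$ admits a right adjoint and creates equalizers of $\tilde U$-split pairs. Creation of split equalizers is inherited from the pullback: a $\tilde U$-split pair in $\A_1$ has an absolute --- hence $K^{\2}$-preserved --- split equalizer in $\C^{\2}$, while comonadicity of $U$ creates the matching equalizer in $\coalg{\LL}$; the universal property of the pullback glues these into an equalizer in $\A_1$ lying over the given one. Concretely, limits in $\A_1$ are created componentwise because $U$ is comonadic and $K^{\2}$ preserves the relevant limits.

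The existence of a right adjoint to $\tilde U$ is the crux, and it is precisely here that accessibility of $(\LL,\RR)$ is indispensable. Since $\LL$ is an accessible comonad on the locally presentable category $\D^{\2}$, its category of coalgebras $\coalg{\LL}$ is again locally presentable \cite{adamek-rosicky}, and the forgetful functor $U$ --- being a left adjoint, with the cofree functor $L$ as its right adjoint --- is cocontinuous and creates colimits. As $K$, and hence $K^{\2}$, is cocontinuous, colimits in the pullback $\A_1$ are computed componentwise; in particular $\A_1$ is cocomplete and $\tilde U$ is cocontinuous. Being a pullback of locally presentable categories along accessible functors, $\A_1$ is itself locally presentable, so the adjoint functor theorem supplies a right adjoint (the ``cofree'' coalgebra functor) to the cocontinuous $\tilde U$. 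With both Beck hypotheses in hand, $\tilde U$ is comonadic, and Theorem~\ref{thm:bourke} produces an algebraic weak factorization system $(\LL',\RR')$ on $\C$ with $\CCoalg{\LL'}\cong\AA$.

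It remains to record accessibility and to identify the underlying ordinary weak factorization system. The induced comonad factors as $L'=\tilde U\circ(\text{cofree})$, a composite of the cocontinuous $\tilde U$ with a right adjoint between locally presentable categories; the latter is necessarily accessible, so $L'$ is accessible and $(\LL',\RR')$ is an accessible algebraic weak factorization system. Finally, an $\LL'$-coalgebra is exactly a map $f$ of $\C$ together with an $\LL$-coalgebra structure on $Kf$, so (using that $K$ preserves retracts and that $\cL$ is the retract closure of the image of $U$) the left class of the underlying weak factorization system is $\{f\in\mor\C : Kf\in\cL\}=K^{-1}\cL$, with right class $\rlp{(K^{-1}\cL)}$; this is by definition the weak factorization system left-induced from the underlying $(\cL,\cR)$ of $(\LL,\RR)$ along $K$. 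The single delicate point throughout is the right-adjoint step: without accessibility, the pullback $\A_1$ need not be locally presentable and the cofree coalgebra functor may fail to exist, which is exactly why left-induction, unlike right-induction, requires the full strength of accessible algebraic weak factorization systems.
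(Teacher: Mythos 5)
Your proof follows the same route as the paper's (which defers to Bourke--Garner): exhibit the pullback double category as satisfying the hypotheses of Theorem~\ref{thm:bourke}, with the only substantive step being the existence of a right adjoint to $\tilde{U}\colon \A_1 \lra \C^\2$, obtained from local presentability of $\A_1$ and cocontinuity of $\tilde{U}$ via the adjoint functor theorem, after which accessibility of the induced factorization and the identification of the underlying left class as $K^{-1}\cL$ follow as you say. The one point you gloss over is that a \emph{strict} pullback of locally presentable categories along accessible functors is only known to be locally presentable when it agrees with the pseudopullback, which holds here because the forgetful functor $U\colon \coalg{\LL}\lra\D^\2$ is an isofibration; with that remark added, your argument matches the intended proof.
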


The special adjoint functor theorem implies that a cocontinuous functor between locally presentable categories is a left adjoint.

\begin{thm}[Right induction {\cite[13]{bourke-garner}}]\label{thm:right-awfs}  Let $K \colon \C \lra \D$ be an accessible and continuous functor between locally presentable categories, and suppose that $\D$ has an accessible algebraic weak factorization system $(\LL,\RR)$. Then the pullback
\[ \xymatrix{ \AA \ar[d]_{\tilde{\UU}} \ar[r]^-{\tilde{K}} \ar@{}[dr]|(.2){\lrcorner} & \AAlg{\RR} \ar[d]^\UU \\ \Sq{\C} \ar[r]_{\Sq{K}} & \Sq{\D}}\] encodes an accessible algebraic weak factorization system on $\C$,  whose underlying weak factorization system is right-induced from the underlying weak factorization system on $\D$.
\end{thm}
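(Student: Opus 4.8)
The plan is to establish Theorem \ref{thm:right-awfs} as the formal dual of Theorem \ref{thm:left-awfs}, substituting the algebra half of Theorem \ref{thm:bourke} for its coalgebra half. Write $\A_1$ for the category of vertical arrows of the pullback double category $\AA$; concretely $\A_1$ is the pullback of the forgetful functor $U \colon \alg{\RR} \lra \D^\2$ along $K^\2 \colon \C^\2 \lra \D^\2$, and $\tilde{U} \colon \A_1 \lra \C^\2$ is the projection whose monadicity I must verify. The algebra half of Theorem \ref{thm:bourke} has two hypotheses: (i) that $\tilde{U}$ is monadic, and (ii) that for every vertical arrow $f \colon a \to b$ of $\AA$, the square with $f$ on the top and left edges and identities on the bottom and right edges lies in $\AA$. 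Condition (ii) I would dispatch immediately from the pullback description: a square lies in $\AA$ exactly when its image under $\Sq{K}$ lies in $\AAlg{\RR}$, and the square in question maps to the corresponding square for the $\RR$-algebra $Kf$, which lies in $\AAlg{\RR}$ precisely because $\AAlg{\RR}$ itself satisfies condition (ii).

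The real content is condition (i), for which I would invoke Beck's monadicity theorem, and the first step is to produce a left adjoint to $\tilde{U}$. Because $(\LL,\RR)$ is accessible and $\D$ is locally presentable, the category $\alg{\RR}$ is locally presentable and the monadic functor $U \colon \alg{\RR} \lra \D^\2$ is continuous and accessible; likewise $K^\2$ is continuous and accessible because $K$ is. Since $U$ is an isofibration, the strict pullback computing $\A_1$ coincides with the pseudopullback taken in the 2-category of locally presentable categories and continuous accessible functors, so $\A_1$ is again locally presentable and $\tilde{U} \colon \A_1 \lra \C^\2$ is again continuous and accessible (limits and sufficiently filtered colimits in $\A_1$ being inherited from $\alg{\RR}$ and $\C^\2$). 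The adjoint functor theorem for locally presentable categories then furnishes the required left adjoint.

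It then remains to check that $\tilde{U}$ creates coequalizers of $\tilde{U}$-split pairs, and this is exactly where the pullback construction does the work. Given a $\tilde{U}$-split pair in $\A_1$, I would apply $K^\2$ to transport its split coequalizer from $\C^\2$ to $\D^\2$ — split coequalizers being absolute, hence preserved by every functor — so that the image of the pair under $\tilde{K} \colon \A_1 \lra \alg{\RR}$ is $U$-split. Monadicity of $U$ produces a coequalizer in $\alg{\RR}$ lying over this split coequalizer, and the universal property of the pullback assembles these two coequalizers into one in $\A_1$ that $\tilde{U}$ preserves and reflects. Beck's theorem now yields monadicity of $\tilde{U}$, so Theorem \ref{thm:bourke} applies and $\AA$ encodes the algebras for an algebraic weak factorization system on $\C$. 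Its functorial factorization is recovered, up to isomorphism, from the composite of $\tilde{U}$ with its (accessible) left adjoint, hence is accessible; and its underlying right class consists of the retracts of those $f$ in $\C$ admitting $\tilde{U}$-algebra structure, namely those with $Kf \in \cR$, so the underlying weak factorization system is right-induced along $K$, as claimed.

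I expect the one genuine obstacle to be the construction of the left adjoint to $\tilde{U}$, which is precisely where the accessibility hypotheses on $(\LL,\RR)$ and on $K$ are indispensable: without them one can guarantee neither that $\A_1$ is locally presentable nor that $\tilde{U}$ is accessible, and the adjoint functor theorem would fail to apply. By contrast, condition (ii), the inheritance of split coequalizers, and the identification of the underlying weak factorization system are all formal consequences of the pullback description and of the corresponding properties of $\AAlg{\RR}$.
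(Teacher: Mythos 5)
Your proposal is correct and follows essentially the same route as the paper's own (sketched) argument: verify the hypotheses of Theorem \ref{thm:bourke} for the pullback double category, with the only substantive point being the existence of a left adjoint to $\tilde{U}$, obtained by observing that accessibility of $\RR$ makes $\alg{\RR}$ locally presentable and $U$ accessible, that the strict pullback along the isofibration $U$ is a pseudopullback of locally presentable categories, and then applying the adjoint functor theorem; accessibility of the resulting factorization is read off from the composite of $\tilde{U}$ with its accessible left adjoint. Your additional verification that $\tilde{U}$ creates coequalizers of $\tilde{U}$-split pairs is a detail the paper leaves implicit but is handled exactly as you describe.
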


Note that a functor between locally presentable categories is accessible and continuous if and only if it is a right adjoint \cite[1.66]{adamek-rosicky}.

\begin{rmk} If $(\LL,\RR)$ is cofibrantly generated, so that $\AAlg{\RR} \cong \J^\boxslash$ for some small category $\J$, and if $K \colon \C \lra \D$ is right adjoint to $F \colon \D \lra \C$, then $\AA \cong (F\J)^\boxslash$, so that the right-induced algebraic weak factorization system is again cofibrantly generated.

In fact, all accessible algebraic weak factorization systems are ``cofibrantly generated'' --- provided that this is meant in an expanded sense that includes the possibility of generating not only by a small category $\J$ of arrows but by a small \emph{double category} of arrows. Classical cofibrant generation is the special case in which the double category is a discrete category of vertical arrows, the ``generating cofibrations.'' We enthusiastically refer the reader to \cite[25]{bourke-garner}; however, we won't make use of this fact here.
\end{rmk}

Theorems \ref{thm:left-awfs} and \ref{thm:right-awfs} lead to an improved version of Proposition \ref{prop:acyclicity-reduction}, i.e. the Acyclicity Theorem mentioned in the introduction.

\begin{cor}\label{cor:acyclicity-reduction}
Suppose $(\M, \Fib, \Cof,\WE)$ is an accessible model category, $\C$ and $\K$ are locally presentable, and there exist adjunctions 
\[ \xymatrix@C=4pc@R=4pc{ \K \ar@<1ex>[r]^V \ar@{}[r]|\perp & \M \ar@<1ex>[l]^R \ar@<1ex>[r]^L\ar@{}[r]|\perp & \C. \ar@<1ex>[l]^U}\]  
\begin{enumerate}
\item The right-induced model structure exists on $\C$ if and only if \[\llp\,{U^{-1}\Fib} \subset U^{-1}\WE.\]
\item The left-induced model structure exists on $\K$ if and only if \[\rlp{(V^{-1}\Cof)} \subset V^{-1}\WE.\]
\end{enumerate}
{When these conditions are satisfied, the induced model structures on $\C$ and on $\K$ are again accessible.}
\end{cor}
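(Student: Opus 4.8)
The plan is to assemble the corollary from two inputs that are already in hand: the purely model-categorical reduction of Proposition \ref{prop:acyclicity-reduction}, and the Bourke--Garner existence-and-accessibility results for induced algebraic weak factorization systems, namely Theorems \ref{thm:left-awfs} and \ref{thm:right-awfs}. The first step is to observe that, because $(\M,\Fib,\Cof,\WE)$ is an \emph{accessible} model category, each of its two constituent weak factorization systems $(\Cof\cap\WE,\Fib)$ and $(\Cof,\Fib\cap\WE)$ underlies an accessible algebraic weak factorization system on $\M$. These are the structured objects I would transport along the two given adjunctions.

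Next I would treat the right-induced case on $\C$. The functor $U\colon\C\to\M$ is right adjoint to $L$, and a functor between locally presentable categories is accessible and continuous if and only if it is a right adjoint; hence $U$ satisfies the hypotheses of Theorem \ref{thm:right-awfs}. Applying that theorem to each of the two accessible algebraic weak factorization systems on $\M$ yields accessible algebraic weak factorization systems on $\C$ whose underlying ordinary weak factorization systems are right-induced from those on $\M$, namely $(\llp\,{U^{-1}(\Fib\cap\WE)},U^{-1}(\Fib\cap\WE))$ and $(\llp\,{U^{-1}\Fib},U^{-1}\Fib)$. Dually, $V\colon\K\to\M$ is left adjoint to $R$ and hence cocontinuous, so Theorem \ref{thm:left-awfs} applies and left-induces accessible algebraic weak factorization systems on $\K$ with underlying weak factorization systems $(V^{-1}(\Cof\cap\WE),\rlp{(V^{-1}(\Cof\cap\WE))})$ and $(V^{-1}\Cof,\rlp{(V^{-1}\Cof)})$. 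The sole point requiring care here is matching classes: one must confirm that the right (respectively left) class of each induced algebraic weak factorization system is precisely $U^{-1}$ (respectively $V^{-1}$) of the corresponding class downstairs, which is exactly the assertion that the ``underlying weak factorization system is right- (left-)induced'' in the statements of Theorems \ref{thm:right-awfs} and \ref{thm:left-awfs}.

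With the existence of these weak factorization systems established, the hypotheses of Proposition \ref{prop:acyclicity-reduction} are satisfied, and its two biconditionals deliver statements (1) and (2) verbatim. Finally, whenever the relevant acyclicity condition holds, the resulting model structure is carried by the pair of accessible algebraic weak factorization systems produced above, and is therefore accessible by the definition of an accessible model structure. In short, essentially all of the substantive content is imported from earlier results; the only real obstacle is the bookkeeping of verifying that $U$ and $V$ meet the adjointness and accessibility hypotheses of the induction theorems, and that the induced algebraic weak factorization systems have exactly the underlying left and right classes demanded by Proposition \ref{prop:acyclicity-reduction}, after which the corollary follows immediately.
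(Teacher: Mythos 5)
Your proposal is correct and follows exactly the route the paper intends: apply Theorem \ref{thm:right-awfs} to the right adjoint $U$ and Theorem \ref{thm:left-awfs} to the left adjoint $V$ to obtain the induced accessible algebraic weak factorization systems, feed their underlying weak factorization systems into Proposition \ref{prop:acyclicity-reduction} to get the two biconditionals, and conclude accessibility of the induced structures directly from the definition. The paper treats this as an immediate corollary and gives no further argument, so there is nothing to add.
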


{
\subsection{Projective and injective accessible model structures}
\label{ssec:injective}

For any small category $\D$ and bicomplete category $\M$, the forgetful functor $\M^\D \lra \M^{\ob\D}$ is both monadic and comonadic, with left and right adjoints given by left and right Kan extension. If $\M$ is a model category, then $\M^{\ob\D} \cong \prod_{\ob\D}\M$ inherits a pointwise-defined model structure, with weak equivalences, cofibrations, and fibrations all defined pointwise in $\M$. Assuming they exist, the right-induced model structure on $\M^\D$ is called the \emph{projective model structure}, while the left-induced model structure is called the \emph{injective model structure}.

In this section we apply Corollary \ref{cor:acyclicity-reduction} to prove the following existence result for model structures on diagram categories.

\begin{thm} If $\M$ is an accessible model category, and $\D$ is any small category, then $\M^\D$ admits both injective and projective model structures, which are again accessible.
\end{thm}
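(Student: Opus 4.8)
The plan is to apply the Acyclicity Theorem (Corollary \ref{cor:acyclicity-reduction}) to the restriction and Kan extension adjunctions, so that the existence of both model structures reduces to a pair of acyclicity conditions that I will verify by hand. First I would set up the central term of the zigzag. Since $\ob\D$ is a set and $\M$ is locally presentable, the product $\M^{\ob\D} \cong \prod_{\ob\D}\M$ is locally presentable, and its pointwise model structure is accessible: each of its two weak factorization systems is the $(\ob\D)$-fold product of an accessible algebraic weak factorization system on $\M$, and a set-indexed product of accessible functors is again accessible, colimits in a product being computed pointwise. The category $\M^\D$ is locally presentable because $\M$ is and $\D$ is small. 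Finally, the forgetful (restriction) functor $\res \colon \M^\D \to \M^{\ob\D}$ is simultaneously a right adjoint, with left adjoint the left Kan extension $\lan$, and a left adjoint, with right adjoint the right Kan extension $\ran$; both Kan extensions exist because $\M$ is bicomplete. Thus Corollary \ref{cor:acyclicity-reduction} applies with $\M^{\ob\D}$ as the central accessible model category: right-inducing along $U = \res$ produces, provided acyclicity holds, the projective model structure, and left-inducing along $V = \res$ produces the injective one, both necessarily accessible.

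It then remains only to check the two acyclicity conditions, and the key observation is that the triple of adjoints $d_! \dashv \mathrm{ev}_d \dashv d_*$, induced by the inclusion of a single object $d \in \D$, preserves the relevant classes of $\M$ pointwise. For the projective case I must show $\llp{\res^{-1}\Fib} \subseteq \res^{-1}\WE$. Given $f \in \llp{\res^{-1}\Fib}$ and any fibration $p$ in $\M$, the diagram $d_* p$, whose $e$-component $p^{\D(e,d)}$ is a product of copies of $p$, is a pointwise fibration; hence $f \boxslash d_* p$, which by the adjunction $\mathrm{ev}_d \dashv d_*$ is equivalent to $f_d \boxslash p$. As $p$ is arbitrary, $f_d \in \llp{\Fib} = \Cof \cap \WE$, so $f$ is a pointwise weak equivalence. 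The injective case is formally dual: given $f \in \rlp{(\res^{-1}\Cof)}$ and any cofibration $i$ in $\M$, the diagram $d_! i$, whose $e$-component is the coproduct $\coprod_{\D(d,e)} i$, is a pointwise cofibration, so $d_! i \boxslash f$, equivalently (by $d_! \dashv \mathrm{ev}_d$) $i \boxslash f_d$; hence $f_d \in \rlp{\Cof} = \Fib \cap \WE$ and $f$ is again a pointwise weak equivalence.

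I expect the main obstacle to be conceptual rather than computational: one must recognize that, in contrast to the Quillen-path-object technique of Theorem \ref{thm:quillen-path}, no cofibrant or fibrant replacements and no explicit set of generators are required here. The entire content of both acyclicity conditions comes from the elementary closure of fibrations under products and of cofibrations under (arbitrary) coproducts, transported through the object-wise Kan extension adjunctions. The only care needed is in confirming the pointwise Kan extension formulas together with these closure properties, and in checking that the product model structure on $\M^{\ob\D}$ genuinely inherits accessibility from $\M$. Once these are in place, the two short lifting arguments close the proof, and accessibility of the resulting projective and injective structures is immediate from Corollary \ref{cor:acyclicity-reduction}.
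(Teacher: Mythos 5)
Your proof is correct, and it reaches the acyclicity conditions by the same reduction as the paper --- both invoke Corollary \ref{cor:acyclicity-reduction} for the restriction/Kan-extension adjunctions between $\M^\D$ and the pointwise accessible model structure on $\M^{\ob\D}$ --- but your verification of acyclicity takes a genuinely different route. The paper argues algebraically: writing $(\CC,\FF_t)$ for the algebraic weak factorization system for (cofibrations, acyclic fibrations) on $\M$, it compares the left-induced awfs on $\M^\D$ with the pointwise-defined one $(\CC^\D,\FF_t^\D)$ via a morphism of algebraic weak factorization systems obtained from the universal property of the pullback in Theorem \ref{thm:left-awfs}, concluding that every map in $\rlp{(V^{-1}\Cof)}$ underlies an $\FF_t^\D$-algebra and hence is a pointwise acyclic fibration. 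You instead run the classical adjoint-lifting argument through the triple $d_!\dashv \mathrm{ev}_d\dashv d_*$: the pointwise Kan extension formulas $(d_!i)(e)=\coprod_{\D(d,e)}i$ and $(d_*p)(e)=p^{\D(e,d)}$, together with closure of left classes under coproducts and right classes under products, show that $f\in\rlp{(\res^{-1}\Cof)}$ has $f_d\in\rlp{\Cof}=\Fib\cap\WE$ and that $f\in\llp{\res^{-1}\Fib}$ has $f_d\in\llp{\Fib}=\Cof\cap\WE$. Both arguments are sound; yours is shorter and more elementary, and treats the two cases symmetrically, while the paper's stays within the algebraic framework (it exhibits the lifting data explicitly as a natural $\FF_t$-algebra structure on each component) and illustrates how morphisms of awfs compare induced and pointwise structures. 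Your preliminary observations --- local presentability of $\M^\D$ and $\M^{\ob\D}$, and accessibility of the pointwise awfs on the product --- are correct and are implicitly assumed in the paper as well.
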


In the case where the model structure on $\M$ is given by a pair of cofibrantly generated algebraic weak factorization systems, the argument given here first appeared as \cite[Theorem 4.5]{riehl-algebraic}.

\begin{proof}
We prove that $\M^\D$ admits the injective model structure, left-induced from the pointwise defined model structure on $\M^{\ob\D}$. The proof of the existence of the projective model structure is dual. 

To apply Corollary \ref{cor:acyclicity-reduction} to the restriction--right Kan extension adjunction
\[ \xymatrix@C=4pc@R=4pc{ \M^\D \ar@<1ex>[r]^V \ar@{}[r]|\perp & \M^{\ob\D}, \ar@<1ex>[l]^R }\]  
it remains only to check the acyclicity condition (2), which we prove using an algebraic argument.

Let $(\CC,\FF_t)$ denote the algebraic weak factorization system for the cofibrations and trivial fibrations in $\M$. The categories $\M^\D$ and $\M^{\ob\D}$ both inherit pointwise-defined algebraic weak factorization systems $(\CC^\D,\FF_t^\D)$ and $(\CC^{\ob\D}, \FF_t^{\ob\D})$, given by postcomposing with the comonad $\CC$ and monad $\FF_t$. Write $(\CC^\text{inj},\FF_t^\text{inj})$ for the algebraic weak factorization system on $\M^\D$ that is left induced from $(\CC^{\ob\D}, \FF_t^{\ob\D})$, applying Theorem \ref{thm:left-awfs}. If we write $\Cof$ and $\WE$ for the pointwise-defined cofibrations and weak equivalences in $\M^{\ob\D}$, then the underlying weak factorization system of $(\CC^\text{inj},\FF_t^\text{inj})$ is $(V^{-1}\Cof,\rlp{(V^{-1}\Cof)})$, the weak factorization system for the cofibrations and weak equivalences for the injective model structure.

Because the class of weak equivalences is retract-closed, to prove that $\rlp{(V^{-1}\Cof)} \subset V^{-1}\WE$, it suffices to show that every $\FF_t^\text{inj}$-algebra is an $\FF_t^\D$-algebra, as the components of an $\FF_t^\D$-algebra are $\FF_t$-algebras, in particular acyclic fibrations and thus weak equivalences. To establish this inclusion, it is easiest to argue from the other side. Because restriction along $\ob\D\hookrightarrow\D$ commutes with postcomposition with the comonad $\CC$, there exist canonically defined double functors
\[ \xymatrix{\CCoalg{\CC^{\D}} \ar@/^/[drr] \ar@/_/[ddr] \ar@{-->}[dr] & \\ &  \CCoalg{\CC^{\text{inj}}} \ar[d]_{\tilde{\UU}} \ar[r]^-{\tilde{K}} \ar@{}[dr]|(.2){\lrcorner} & \CCoalg{\CC^{\ob\D}} \ar[d]^\UU \\ & \Sq{\M^\D} \ar[r]_{\Sq{K}} & \Sq{\M^{\ob\D}}}\] inducing a double functor $\CCoalg{\CC^{\D}} \to  \CCoalg{\CC^{\text{inj}}}$ by the universal property that defines the left-induced algebraic weak factorization system.\footnote{A $\CC^\text{inj}$-coalgebra is a natural transformation whose components are $\CC$-coalgebras, while a $\CC^\D$-coalgebra is a natural transformation whose components are $\CC$-coalgebras and whose naturality squares are $\CC$-coalgebra morphisms. The induced map is the natural inclusion of the class of $\CC^\D$-coalgebras into the class of $\CC^\text{inj}$-coalgebras.} Such a double functor encodes a \emph{morphism of algebraic weak factorization systems} $(\CC^\D,\FF_t^\D) \to (\CC^{\text{inj}},\FF_t^{\text{inj}})$, the right-hand component of which defines the inclusion $\FF_t^{\text{inj}}$-algebras into $\FF_t^{\D}$-algebras.\footnote{A morphism of algebraic weak factorization systems is a special case of an \emph{adjunction of algebraic weak factorization systems}, the theory of which is developed in \cite{riehl-algebraic} and \cite{bourke-garner}.} This proves  the acyclicity condition.
\end{proof}

}

\section{Enriched cofibrantly generated algebraic weak factorization systems}\label{sec:enriched-cof-gen}

We introduced our notion of accessible model category because  of our particular interest in model structures that are known not to be cofibrantly generated, at least in the classically understood sense, but that are accessible. We shall see, in Theorem \ref{thm:acc-awfs} below, that any enriched cofibrantly generated weak factorization system on a locally presentable category can be promoted to an accessible algebraic weak factorization system, to which Theorems \ref{thm:left-awfs} and \ref{thm:right-awfs} apply.

In \S\ref{ssec:enriched-gen}, we briefly sketch the ideas behind the notion of enriched cofibration. These are included for context and because we expect they will be of interest to many readers, but are not essential to the present work. In \S\ref{ssec:enriched-SOA}, we prove that the enriched algebraic small object argument produces accessible algebraic weak factorization systems; the precise statement of this result may be found in Theorem \ref{thm:acc-awfs}. This result provides the major source of accessible model categories that are not combinatorial.

\subsection{Enriched cofibrant generation}\label{ssec:enriched-gen}

Before explaining the notion of enriched cofibrant generation, we whet the reader's appetite with a pair of classical examples.

\begin{ex}[{\cite[Theorem 2.2]{barthel-may-riehl}}]\label{ex:chain} Let $\Ch_R$ denote the category of chain complexes of modules over a commutative ring $R$ with unit. We use the standard topological names for the following chain complexes. \[ (S^n)_k = \begin{cases} R & k = n \\ 0 & k \neq n \end{cases} \qquad (D^n)_k = \begin{cases} R & k = n,n-1 \\ 0 & \mathrm{else} \end{cases} \qquad I = \begin{cases} R & k=1 \\ R \oplus R & k = 0 \\ 0 & \mathrm{else} \end{cases}\] 
The non-trivial differential in $D^n$ is the identity; the non-trivial differential in $I$ is $x \mapsto (x,-x)$.

For any chain map $f \colon X \lra Y$, we can define ``mapping cylinder'' and ``mapping cocylinder'' factorizations 
\begin{equation}\label{eq:chain-factorization} X \lra Mf \lra Y \qquad X \lra Nf \lra Y \end{equation} where \[ \xymatrix{X \ar[r]^f \ar[d]_{i_0}  \ar@{}[dr]|(.8){\ulcorner} & Y \ar[d] & Nf \ar@{}[dr]|(.2){\lrcorner} \ar[r] \ar[d] & Y^I \ar[d]^{p_0} \\ X \otimes I \ar[r] & Mf & X \ar[r]_f & Y}\] are the \emph{mapping cylinder} and \emph{mapping cocylinder}; see \cite[Definition 18.1.1]{may-ponto}.

Both of the functorial factorizations \eqref{eq:chain-factorization} are algebraic weak factorization systems, i.e., the left factors define comonads, and the right factors define monads. Moreover, both algebraic weak factorization systems are \emph{enriched cofibrantly generated}, by which we mean that they are produced by the enriched version of the algebraic small object argument \cite[Theorem 13.2.1]{riehl-cathtpy}. The relevant enrichment is over the category of $R$-modules, which is why we require the ring $R$ to be commutative. The generating sets are \[ \I = \{ S^{n-1} \hookrightarrow D^n\}_{n \in \mathbb{Z}} \qquad \J = \{ 0 \lra D^n\}_{n \in \mathbb{Z}},\] respectively.
The underlying weak factorization systems of the algebraic weak factorization systems \eqref{eq:chain-factorization} define a model structure on $\Ch_R$, which in the literature is called the ``Hurewicz,''  ``relative,'' or ``absolute'' model structure, and which we call the \emph{Hurewicz model structure}. In this model structure weak equivalences are chain homotopy equivalences, fibrations are levelwise split epimorphisms and cofibrations are levelwise split monomorphisms.
\end{ex}

\begin{rmk} If ``cofibrant generation'' is meant in its classical (unenriched) sense, the sets $\I$ and $\J$ of Example \ref{ex:chain} generate the ``Quillen'', ``classical''  or ``projective'' model structure, where weak equivalences are quasi-isomorphisms and fibrations are epimorphisms. At least when $R=\mathbb{Z}$, the Hurewicz model structure is provably not cofibrantly generated \cite{christensen-hovey}.
\end{rmk}

{
Let $j$ and $f$ be morphisms in a locally small category $\D$. The set of commutative squares from $j$ to $f$ is defined by the following pullback. 
\[ \xymatrix{  \D(\mathrm{cod} j, \mathrm{dom} f) \ar@{-->}[dr] \ar@/^/[drr] \ar@/_/[ddr] \\ & \mathrm{Sq}(j,f) \ar[r] \ar[d] \ar@{}[dr]|(.2){\lrcorner} & \D(\mathrm{dom}\,j,\mathrm{dom}f) \ar[d] \\ & \D(\mathrm{cod}\,j,\mathrm{cod}f) \ar[r] & \D(\mathrm{dom}\,j,\mathrm{cod}f)}\] The map $\D(\mathrm{cod} j, \mathrm{dom} f) \to \mathrm{Sq}(j,f)$ carries a morphism from the codomain of $j$ to the domain of $f$ to the commutative square defined by precomposing with $j$ and postcomping with $f$. The map $j$ has the left lifting property with respect to $f$, which then has the right lifting property with respect to $j$, if and only if this map has a section.

If $\D$ is enriched in a complete and cocomplete closed symmetric monoidal category $\V$, this diagram can be interpreted in the category $\V$, in which case $\mathrm{Sq}(j,f)$ is the $\V$-object of commutative squares from $j$ to $f$. We say that $j$ \emph{has the enriched left lifting property with respect to $f$}, which then has the \emph{enriched right lifting property with respect to $j$}, if and only if this map has a section in $\V$. 

See \cite[\S 13.3]{riehl-cathtpy} for a discussion of the relationship between enriched and unenriched lifting properties.}

\subsection{The enriched algebraic small object argument}\label{ssec:enriched-SOA}

We now show that the enriched algebraic small object argument defines an accessible algebraic weak factorization system under a certain ``enriched smallness'' hypothesis. For convenience, let the base for enrichment be a complete and cocomplete closed symmetric monoidal category $\V$. 

\begin{thm}\label{thm:acc-awfs} Suppose $\J$ is a set (or small category) of arrows in a $\V$-cocomplete category $\D$.\footnote{If the underlying unenriched category of the $\V$-category $\D$ has all colimits and if $\D$ is tensored and cotensored over $\V$, then $\D$ is $\V$-cocomplete \cite[7.6.4]{riehl-cathtpy}. Conversely, any $\V$-cocomplete category admits $\V$-tensors and all unenriched colimits} Suppose further that $\D$ is locally presentable and that the representable functors $\D(d,-) \colon \D \lra \V$ preserve $\lambda$-filtered colimits for some $\lambda$, where $d$ ranges over the domains and codomains of objects of $\J$. Then the enriched algebraic small object argument defines an accessible algebraic weak factorization system whose underlying weak factorization system is enriched cofibrantly generated by $\J$.
\end{thm}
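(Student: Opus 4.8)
The plan is to run the enriched algebraic small object argument of \cite[Chapter 13]{riehl-cathtpy} to build the algebraic weak factorization system, and then to verify accessibility by tracking preservation of sufficiently filtered colimits through that construction. Since $\D$ is locally presentable, every object is presentable, so the enriched small object argument converges: iterating the enriched ``step-one'' functor, together with Garner's reflexive-coequalizer refinement that avoids attaching redundant cells, produces a functorial factorization underlying an algebraic weak factorization system $(\LL,\RR)$. By the defining universal property of this construction, $\alg{\RR}$ is isomorphic over $\D^\2$ to the category of maps $f$ equipped with a natural choice of section in $\V$ of the canonical comparison $\D(\mathrm{cod}\,j,\mathrm{dom}\,f)\to\mathrm{Sq}(j,f)$ for each $j\in\J$, i.e.\ with a chosen enriched lift against every generator. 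This identifies the underlying weak factorization system as the one enriched cofibrantly generated by $\J$, the last clause of the statement.

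It remains to prove that $(\LL,\RR)$ is accessible. By the observation that a functorial factorization is accessible precisely when $E=\mathrm{cod}\cdot L\colon\D^\2\to\D$ is accessible, it suffices to exhibit a regular cardinal $\lambda'$ for which $E$ preserves $\lambda'$-filtered colimits. The one place the hypothesis enters is in showing that the $\V$-object-of-squares functor $\mathrm{Sq}(j,-)\colon\D^\2\to\V$ is $\lambda$-accessible: it is a pullback, hence a finite limit, of the representables $\D(\mathrm{dom}\,j,-)$ and $\D(\mathrm{cod}\,j,-)$ applied to $\mathrm{dom}\,f$ and $\mathrm{cod}\,f$; since colimits in $\D^\2$ are computed componentwise and, after enlarging $\lambda$ if necessary, finite limits commute with $\lambda$-filtered colimits in the locally presentable category $\V$, the enriched smallness hypothesis on these representables gives the claim. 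Propagating this through the step-one functor is then routine: the enriched step-one factorization forms $E_1 f$ as the pushout of the attaching map $\coprod_{j}\mathrm{dom}(j)\otimes\mathrm{Sq}(j,f)\to\mathrm{dom}\,f$ along $\coprod_{j}\mathrm{dom}(j)\otimes\mathrm{Sq}(j,f)\to\coprod_{j}\mathrm{cod}(j)\otimes\mathrm{Sq}(j,f)$, where $\otimes$ is the copower of $\D$ over $\V$; each copower $d\otimes-$ is a left adjoint and hence cocontinuous, while the indexed coproduct over the small $\J$ and the pushout are colimits and so commute with $\lambda$-filtered colimits, whence $E_1$ is $\lambda$-accessible.

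The main obstacle is the passage from $E_1$ to the full factorization $E$. The free-monad construction underlying the small object argument assembles $E$ from $E_1$ by composition, transfinite colimits, and reflexive coequalizers, each of which preserves accessibility; the delicate point is that the ordinal at which the construction stabilizes is governed by the presentability ranks of the domains and codomains of $\J$, so one must choose a single regular cardinal $\lambda'\geq\lambda$ large enough to bound simultaneously this convergence ordinal and the accessibility rank of $E_1$. For such a $\lambda'$ the functor $E$ preserves $\lambda'$-filtered colimits, exactly as in the unenriched situation of Proposition \ref{prop:garner-SOA}; the relevant closure properties of accessible functors are spelled out in \cite[16]{bourke-garner} and \cite[1.3]{ching-riehl}. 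Hence $(\LL,\RR)$ is accessible.
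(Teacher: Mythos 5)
Your proposal is correct and follows essentially the same route as the paper: cite the enriched algebraic small object argument of \cite[13.2.1]{riehl-cathtpy} for the construction and \cite[13.4.2]{riehl-cathtpy} for the identification of the underlying weak factorization system, then verify accessibility by observing that $E=\mathrm{cod}\cdot L$ is assembled from colimit constructions (which commute with all colimits) together with the functors $\mathrm{Sq}(j,-)$, whose accessibility follows from the enriched smallness hypothesis on the representables at the (co)domains of $\J$ plus commutation of the finite-limit pullback with filtered colimits. Your extra detail on the step-one functor and the transfinite convergence is a more explicit unpacking of what the paper compresses into ``various colimit constructions,'' not a different argument.
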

\begin{proof}
The enriched algebraic small object argument is described in \cite[13.2.1]{riehl-cathtpy}. It constructs the functorial factorization via an iterated colimit process. We require that $\D$ have ordinary colimits and tensors over $\V$ because these appear in the construction. It follows, in particular, that the functorial factorization is given by a pair of $\V$-functors $L,R \colon \D^\2 \lra \D^\2$.  It follows from \cite[13.4.2]{riehl-cathtpy} that the underlying weak factorization system of the algebraic weak factorization system $(\LL,\RR)$ is enriched cofibrantly generated by $\J$.

Let $E = \mathrm{cod}\cdot L = \mathrm{dom}\cdot R$. We will show that $E$ preserves $\lambda$-filtered colimits for some $\lambda$ larger than the index of accessiblity of each of the enriched representable functors. The functor $E$ is built from various colimit constructions, which commute with all colimits, and from functors $\mathrm{Sq}(j,-)\colon \D^2 \lra \V$ defined for each generating map $j$. For a morphism $f$ in $\D$,  $\mathrm{Sq}(j,f)$ is the ``object of commutative squares from $j$ to $f$'' defined via the pullback
\[ \xymatrix{ \mathrm{Sq}(j,f) \ar[r] \ar[d] \ar@{}[dr]|(.2){\lrcorner} & \D(\mathrm{dom}\,j,\mathrm{dom}f) \ar[d] \\ \D(\mathrm{cod}\,j,\mathrm{cod}f) \ar[r] & \D(\mathrm{dom}\,j,\mathrm{cod}f)}\] in $\V$. The domain and codomain functors preserve all colimits, because they are defined pointwise. The enriched representables $\D(\mathrm{dom}\, j,-)$ and $\D(\mathrm{cod}\,j,-)$ preserve $\lambda$-filtered colimits by hypothesis. Finally, the pullback, a finite limit, commutes with all filtered colimits.
\end{proof}

In order to apply Theorem \ref{thm:acc-awfs}, we supply conditions under which its central hypothesis --- the enriched smallness condition --- is satisfied. A particularly easy special case is when the ``underlying set'' functor $\V \lra \Set$ represented by the monoidal unit of $\V$ is \emph{conservative} (reflects isomorphisms) and accessible. This is the case, in particular, if $\V \lra \Set$ is monadic, and the monoidal unit is small (in the unenriched sense). Examples of categories $\V$ with this property include the categories of abelian groups, $R$-modules, vector spaces, compact Hausdorff spaces, and so on; in each of these cases, isomorphisms in $\V$ are structure-preserving bijections, and the unit object is finitely presented or compact.

{
\begin{lem} If $\V \lra \Set$ is conservative and accessible, then the enriched representable functors $\D(d,-) \colon \D \lra \V$ are accessible if and only if the unenriched representable functors $\D_0(d,-) \colon D \lra \Set$ are accessible.
\end{lem}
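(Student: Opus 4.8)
The plan is to realize the unenriched representable as the composite of the enriched representable with the underlying-set functor, and then transfer accessibility back and forth across this factorization. Write $\V_0(\uI,-) \colon \V \lra \Set$ for the underlying-set functor represented by the monoidal unit, so that by the very definition of the underlying ordinary category of a $\V$-category there is a natural isomorphism $\V_0\big(\uI, \D(d,x)\big) \cong \D_0(d,x)$; in other words, the underlying ordinary functor of $\D(d,-)$, postcomposed with $\V_0(\uI,-)$, is exactly $\D_0(d,-)$. Abbreviating $G := \V_0(\uI,-)$ and $F := \D(d,-)$, the lemma asserts that, when $G$ is conservative and accessible, $F$ is accessible if and only if $GF = \D_0(d,-)$ is.

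For the forward implication I would simply use that a composite of accessible functors is accessible: pick a regular cardinal $\kappa$ for which both $F$ and $G$ preserve $\kappa$-filtered colimits --- possible because a functor preserving $\lambda$-filtered colimits automatically preserves $\kappa$-filtered colimits for every $\kappa \geq \lambda$ --- and conclude that $GF$ preserves $\kappa$-filtered colimits.

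The converse is the only direction requiring genuine input, and this is where conservativity of $G$ enters. I would fix a regular cardinal $\kappa$ bounding the indices of accessibility of both $G$ and $GF$ (these exist by hypothesis; note that $\D_0$ is cocomplete, since $\D$ is assumed locally presentable, so the relevant colimits are available). For a $\kappa$-filtered diagram $(x_j)_{j \in J}$ in $\D$, form the canonical comparison map $\colim_j \D(d,x_j) \lra \D(d,\colim_j x_j)$ in $\V$. Applying $G$ and using that $G$ preserves $\kappa$-filtered colimits identifies the domain with $\colim_j \D_0(d,x_j)$, while the codomain is $\D_0(d,\colim_j x_j)$; by naturality of the comparison map, its image under $G$ is precisely the unenriched comparison map $\colim_j \D_0(d,x_j) \lra \D_0(d,\colim_j x_j)$. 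This map is an isomorphism exactly because $GF = \D_0(d,-)$ preserves $\kappa$-filtered colimits, so conservativity of $G$ forces the original enriched comparison map to be an isomorphism as well. Hence $F = \D(d,-)$ preserves $\kappa$-filtered colimits and is accessible.

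The main obstacle, modest as it is, sits in the converse: one must check that $G$ genuinely commutes past the colimit $\colim_j \D(d,x_j)$ formed in $\V$, so that the transported comparison map has domain $\colim_j \D_0(d,x_j)$, and that this transported map coincides, up to the canonical identification, with the unenriched comparison map. Both points follow from the accessibility of $G$ together with the universal property characterizing comparison maps, and they are exactly where the two hypotheses on the underlying-set functor --- accessibility to move past the colimit, conservativity to conclude --- are genuinely used.
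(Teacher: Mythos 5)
Your proof is correct and follows essentially the same route as the paper: factor $\D_0(d,-)$ as the underlying-set functor applied after $\D(d,-)$, use closure of accessible functors under composition for one direction, and for the converse use accessibility of $\V\lra\Set$ to identify the image of the enriched comparison map with the unenriched one and conservativity to conclude. Your write-up is merely a bit more explicit than the paper's about why the transported comparison map agrees with the unenriched one.
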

\begin{proof} Let $\D$ be a $\V$-category, and let $d \in \D$. The unenriched representable functor is the composite of the enriched representable functor and the underlying set functor \[ \D_0(d,-) \colon \D \xrightarrow{\D(d,-)} \V \xrightarrow{\ \ \ \ \ \ } \Set.\] 
If the enriched representable at $d$ is accessible, then $\D_0(d,-) \colon \D \lra \Set$ preserves $\lambda$-filtered colimits, where $\lambda$ is greater than the indexes of accessibility of the enriched representable and of $\V \lra \Set$. Conversely, if the unenriched representable preserves $\lambda$-filtered colimits, we have a bijection of underlying sets \[ \colim \D_0(d,x_\alpha) \xrightarrow{\ \ \ \cong \ \ \ } \D_0(d,\colim x_\alpha)\] for any $\lambda$-filtered diagram $(x_\alpha)$ in $\D$. The comparison morphism lifts to $\V$ and is an isomorphism there because $\V \lra \Set$ is conservative, which says exactly that the enriched representable $\D(d,-)$ preserves $\lambda$-filtered colimits.
\end{proof}

Frequently, a category $\D$ is a tensored, cotensored,  $\V$-enriched category because $\D$ is a closed monoidal category equipped with an adjunction \begin{equation}\label{eq:strong-monoidal} \xymatrix@C=4pc@R=4pc{ \V \ar@<1ex>[r]^-F \ar@{}[r]|-\perp & \D \ar@<1ex>[l]^-U}\end{equation} in which the left adjoint $F$ is \emph{strong monoidal}, i.e., if $\otimes$ denotes the monoidal product on $\V$ and $\wedge$ the monoidal product on $\D$,  there is a natural isomorphism $Fv \wedge Fv' \cong F(v \otimes v')$. If $\hom_\D$ denotes the closed structure on $\D$, then $\D$ becomes a $\V$-category with hom-objects $U\hom_\D(d,d')$, tensors $Fv \wedge d$, and cotensors $\hom_\D(Fv,d)$, for $d,d' \in \D$ and $v \in \V$; see \cite[3.7.11]{riehl-cathtpy}. For $\V = \sSet$ or $\sSet_*$, examples include all of the familiar enrichments (including $\D = \sSet_*$ over $\V=\sSet$). Another example is $\D = \Ch_R$ and $\V=\Mod_R$. The left adjoint is the inclusion in degree zero, and the right adjoint takes 0-cycles.

\begin{prop} If \eqref{eq:strong-monoidal} is a strong monoidal adjunction between locally presentable categories making $\D$ into a $\V$-category, then every object of $\D$ satisfies the enriched smallness condition.
\end{prop}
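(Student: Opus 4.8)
The plan is to exhibit the enriched representable functor $\D(d,-)\colon \D \to \V$ as a composite of two accessible functors, and then invoke the fact that a composite of accessible functors is again accessible. By the construction of the $\V$-enrichment recalled just above (following \cite[3.7.11]{riehl-cathtpy}), the hom-objects are $\D(d,d') = U\hom_\D(d,d')$, so the enriched representable at $d$ factors as
\[ \D \xrightarrow{\ \hom_\D(d,-)\ } \D \xrightarrow{\ U\ } \V. \]
It therefore suffices to show that each of the two displayed functors preserves sufficiently large filtered colimits.

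The key observation is that both factors are right adjoints between locally presentable categories. Since $\D$ is closed monoidal, the internal-hom functor $\hom_\D(d,-)$ is right adjoint to $d \wedge -\colon \D \to \D$; and by hypothesis $U$ is right adjoint to $F$. By \cite[1.66]{adamek-rosicky}, a functor between locally presentable categories is a right adjoint precisely when it is continuous and accessible; in particular any right adjoint between such categories is accessible. Applying this twice shows that $\hom_\D(d,-)\colon \D \to \D$ and $U\colon \D \to \V$ are both accessible.

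To finish, I would choose a regular cardinal $\lambda$ exceeding the indices of accessibility of the two factors. Each then preserves $\lambda$-filtered colimits, and hence so does the composite $\D(d,-) = U\hom_\D(d,-)$. This is exactly the enriched smallness condition appearing in the hypotheses of Theorem \ref{thm:acc-awfs} for the object $d$, and since $d \in \D$ was arbitrary, every object of $\D$ satisfies it.

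There is essentially no analytic difficulty in this argument: the entire content is packaged into the single nontrivial input that right adjoints between locally presentable categories are automatically accessible. Rather than an obstacle, the point to be careful about is the bookkeeping of hypotheses. In particular, the strong monoidality of $F$ is used \emph{only} to guarantee that the $\V$-enrichment takes the stated form $\D(d,d') = U\hom_\D(d,d')$; once that formula is in hand, the accessibility argument relies solely on the adjunction $F \dashv U$ together with the fact that $\D$ is closed monoidal and locally presentable.
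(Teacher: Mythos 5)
Your argument is correct, but it follows a genuinely different route from the paper's. You factor the ordinary functor underlying the enriched representable as $U\circ\hom_\D(d,-)$ and observe that each factor is a right adjoint between locally presentable categories, hence accessible by \cite[1.66]{adamek-rosicky}, so the composite is accessible. (You could compress this further: the natural isomorphisms $\V_0(v,\D(d,x))\cong\D_0(Fv,\hom_\D(d,x))\cong\D_0(Fv\wedge d,x)$ exhibit $\D(d,-)$ itself as right adjoint to $F(-)\wedge d\colon\V\lra\D$, so a single application of \cite[1.66]{adamek-rosicky} suffices.) The paper instead fixes a dense generator $P$ of $\lambda$-presentable objects of $\V$, uses that the functors $\V_0(p,-)$ for $p\in P$ are jointly conservative, and verifies that the comparison map $\colim\D(d,x_\alpha)\lra\D(d,\colim x_\alpha)$ becomes a bijection after applying each $\V_0(p,-)$, via the same tensor adjunction $\V_0(p,\D(d,x))\cong\D_0(Fp\wedge d,x)$ together with the unenriched smallness of the objects $Fp\wedge d$. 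Your version is shorter and hides the cardinal bookkeeping inside the adjoint functor theorem; the paper's version is more explicit, exhibiting the index of accessibility as a supremum of presentability ranks of the tensors $Fp\wedge d$, and its technique of testing against a dense generator is the one that would survive in settings where the enriched representable is not known in advance to be a right adjoint. The one point you should make explicit is that the underlying ordinary functor of the $\V$-functor $\D(d,-)$ agrees with $U\hom_\D(d,-)$ on morphisms as well as on objects; this is immediate from the construction of the enrichment in \cite[3.7.11]{riehl-cathtpy}, but it is the step where the strong monoidality of $F$ (in particular $F\uI\cong$ the unit of $\D$) is actually used.
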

\begin{proof}
To say that $\V$ is locally $\lambda$-presentable implies that there exists a set $P$ of $\lambda$-presentable objects of $\V$ that form a dense generator. In particular, the unenriched representable functors $\V_0(p,-) \colon \V \lra \Set$ for $p \in P$ are \emph{jointly conservative} meaning that a map that induces a bijection after applying $\V_0(p,-)$ for all $p \in P$ is an isomorphism in $\V$ \cite[1.26]{adamek-rosicky}.

We will show that for each $d \in \D$ there is some regular cardinal $\kappa > \lambda$ so that $\D(d,-)\colon \D \lra \V$ preserves $\kappa$-filtered colimits. Here $\kappa$ should be chosen so that, for each $p \in P$, the unenriched representable $\D_0(Fp \wedge d,-) \colon \D \lra \Set$ preserves $\kappa$-filtered colimits. Because $\D$ is locally presentable, each unenriched representable preserves $\mu$-filtered colimits for some regular cardinal $\mu$. Because $P$ is a set, we can define $\kappa$ to be the supremum of such cardinals.

Consider a $\kappa$-filtered diagram $(x_\alpha)$ in $\D$. To show that the map \newline
$${\colim \D(d,x_\alpha) \xrightarrow{\ \ \cong \ \ } \D(d,\colim x_\alpha)}$$ is an isomorphism in $\V$, we apply $\V_0(p,-)$ for all $p \in P$. We have 
\begin{align*}
\V_0(p,\colim \D(d,x_\alpha)) &\cong \colim \V_0(p, \D(d, x_\alpha)) \\ \intertext{because $p$ is $\lambda$-small and hence $\kappa$-small}
&\cong \colim \D_0(Fp \wedge d, x_\alpha) \\ \intertext{by the definition of tensors in $\D$} &\cong \D_0(Fp \wedge d, \colim x_\alpha)\\ \intertext{because each $Fp \wedge d$ is $\kappa$-small} &\cong \V_0(p, \D(d,\colim x_\alpha)).\end{align*} Since the $p \in P$ form a dense generator for $\V$, this family of isomorphisms implies that $\colim \D(d,x_\alpha) \cong \D(d,\colim x_\alpha)$ in $\V$ as desired.
\end{proof}
}


\section{Spectral examples}\label{sec:spectra_ex}

In this section we apply Theorems \ref{thm:quillen-path} and \ref{thm:a-mod-cylinder} to categories of spectra. Since we require locally presentable categories, we work with the category $\cat{Sp}^\Sigma$ of symmetric spectra of simplicial sets. By \cite[III.4.13]{schwede} (see also Section 5 in \cite{hss}) there is a simplicial, combinatorial model structure on $\cat{Sp}^\Sigma$ with all objects cofibrant called the injective stable model structure.
This model structure can also be constructed using the general result about combinatorial model structures given in~\cite[3.6]{hess-shipley}. 

{ By \cite[Theorem 5.3.6, parts 3 and 5]{hss} $\SpS$, equipped with the injective stable model structure, is a $\V$--model category, where $\V=\SpS$, equipped with the projective stable model structure. Recall that the unit is cofibrant in $\V$. We can therefore apply Theorem~\ref{thm:a-mod-cylinder}, obtaining the following consequence.}

\begin{cor}\label{cor.of.2.2.3}
For any symmetric ring spectrum $A$, there exists an {\em injective model structure} on $\Mod_A$ left-induced from the injective stable model structure on $\SpS$ with cofibrations the injections and weak equivalences the stable equivalences.  
\end{cor}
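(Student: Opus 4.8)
The plan is to deduce Corollary~\ref{cor.of.2.2.3} as a direct application of Theorem~\ref{thm:a-mod-cylinder}, specialized to the data $\M = \SpS$ equipped with the injective stable model structure, $\V = \SpS$ equipped with the projective stable model structure, and $A$ the given symmetric ring spectrum viewed as a monoid in $\M$ under the smash product. Since Theorem~\ref{thm:a-mod-cylinder} already packages the construction of the left-induced model structure (via the Quillen path object argument of Theorem~\ref{thm:quillen-path}), the whole task reduces to checking that its hypotheses hold in this situation; the conclusion then produces the model structure on $\Mod_A$ left-induced along the forgetful functor $U \colon \Mod_A \to \SpS$.

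First I would verify the hypotheses on the base $\V$: that $(\V,\otimes,\uI)$ is a monoidal model category whose unit $\uI$ is cofibrant is precisely the fact recalled immediately before the statement. Next I would check the three conditions on $\M$. That $\M$ is a $\V$-model category is \cite[Theorem~5.3.6, parts~3 and~5]{hss}, cited above; that $\M$ is a closed symmetric monoidal category is the standard closed symmetric monoidal structure on $\SpS$ given by the smash product; and that $\M$ is accessible follows because the injective stable model structure is combinatorial, whence accessible by the corollary to Proposition~\ref{prop:garner-SOA} asserting that every combinatorial model structure is accessible. Finally, a symmetric ring spectrum is by definition a monoid in $\SpS$, so the hypothesis on $A$ is met, and because every object of $\SpS$ is cofibrant in the injective stable model structure, $\Mod_A$ automatically admits underlying-cofibrant replacements (the ``e.g.'' clause of Theorem~\ref{thm:a-mod-cylinder}, with $Q$ the identity).

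With all hypotheses in place, Theorem~\ref{thm:a-mod-cylinder} yields the left-induced model structure on $\Mod_A$, and it remains only to identify its cofibrations and weak equivalences. By the definition of the left-induced structure these classes are $U^{-1}\Cof$ and $U^{-1}\WE$, where $\Cof$ and $\WE$ are the cofibrations and weak equivalences of the injective stable model structure, namely the injections and the stable equivalences. Hence the cofibrations of $\Mod_A$ are exactly the maps of $A$-modules whose underlying map of spectra is an injection, and the weak equivalences are those whose underlying map is a stable equivalence, as claimed. I expect no genuine obstacle here: all of the substantive work is absorbed into Theorem~\ref{thm:a-mod-cylinder}, and the only point requiring a moment's thought is the accessibility of $\M$, which is immediate from combinatoriality.
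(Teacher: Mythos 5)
Your proposal is correct and follows exactly the paper's own route: the paper likewise deduces the corollary by applying Theorem~\ref{thm:a-mod-cylinder} with $\M=\SpS$ in the injective stable model structure and $\V=\SpS$ in the projective stable model structure, citing \cite[Theorem 5.3.6]{hss} for the $\V$-model category structure, cofibrancy of the unit in $\V$, combinatoriality (hence accessibility) of the injective structure, and the fact that all objects are cofibrant so that underlying-cofibrant replacements are automatic. The only difference is that you spell out these verifications more explicitly than the paper does.
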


The injective stable model structure on $\SpS$ is simplicial by~\cite[5.3.7]{hss}; see also~\cite[III.4.16]{schwede}.  It follows by Lemma 2.25 in~\cite{six-author} that this lifts to $\Mod_A$ as well.

\begin{prop}
The injective model structure on $\Mod_A$ is simplicial.  
\end{prop}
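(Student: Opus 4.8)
The plan is to deduce the statement from \cite[Lemma 2.25]{six-author}, whose purpose is precisely to propagate a simplicial enrichment across a left-induction, so the work reduces to checking that lemma's hypotheses. First I would record that $\Mod_A$ is tensored, cotensored, and enriched over $\sSet$: for an $A$-module $M$ and a simplicial set $K$, the tensor $M \otimes K$, the cotensor $M^K$, and the mapping space all acquire $A$-module structures from the action on the underlying symmetric spectrum of $M$, since the simplicial action is applied ``on the simplicial-set side'' and commutes with the ring action, giving $A \sm (M \otimes K) \cong (A \sm M) \otimes K \to M \otimes K$. Consequently the forgetful functor $U \colon \Mod_A \lra \SpS$ is a \emph{simplicial} functor, and because it is the left adjoint in the left-inducing forgetful/hom-adjunction of Theorem \ref{thm:a-mod-cylinder}, it is cocontinuous; in particular it preserves simplicial tensors, so that $U(M \otimes K) \cong UM \otimes K$ naturally.

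The heart of the argument is then the verification of the pushout-product axiom (SM7), which I would carry out by transporting it along $U$. Given a cofibration $i$ in the injective model structure on $\Mod_A$ and a cofibration $j \colon K \to L$ of simplicial sets, the pushout-product $i \mathbin{\hat{\otimes}} j$ is assembled from simplicial tensors and a single pushout, all of which $U$ preserves; hence $U(i \mathbin{\hat{\otimes}} j)$ is exactly the pushout-product $Ui \mathbin{\hat{\otimes}} j$ computed in $\SpS$. Since the cofibrations and weak equivalences of the injective model structure on $\Mod_A$ are created by $U$, the map $Ui$ is a cofibration in $\SpS$; and since the injective stable model structure on $\SpS$ is simplicial, $Ui \mathbin{\hat{\otimes}} j$ is a cofibration there, which is moreover acyclic whenever $i$ or $j$ is. Pulling this conclusion back through the creation of cofibrations and weak equivalences by $U$ shows that $i \mathbin{\hat{\otimes}} j$ is a cofibration in $\Mod_A$, acyclic in the appropriate cases, which is SM7.

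I expect the only genuinely delicate point to be the compatibility that makes $U$ a \emph{simplicial left adjoint}, namely the natural isomorphism $U(M \otimes K) \cong UM \otimes K$ together with the fact that $U$ is simultaneously cocontinuous (as a left adjoint) and creates cofibrations and weak equivalences (as the left-inducing functor). It is this coincidence --- that the functor along which we induce is also the functor through which the simplicial tensoring factors --- that lets the pushout-product diagram and its homotopical content be computed entirely in $\SpS$ and then reflected back. Once this compatibility is in place, the remaining steps are formal, and the conclusion follows either by the direct SM7 check above or, more economically, by invoking \cite[Lemma 2.25]{six-author} with the simplicial adjunction $U \dashv \hom(A,-)$.
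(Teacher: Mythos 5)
Your proposal is correct and matches the paper's argument, which likewise deduces the result from Lemma 2.25 of \cite{six-author} together with the fact that the injective stable model structure on $\SpS$ is simplicial \cite[5.3.7]{hss}. The paper gives only the citation, whereas you additionally spell out the verification of that lemma's hypotheses (the forgetful left adjoint preserves tensors and colimits and creates cofibrations and weak equivalences, so SM7 can be checked after applying $U$); this is exactly the content of the cited lemma.
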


Let $A$ be a (strictly) commutative ring spectrum. Let $C$ be an $A$-coalgebra with comultiplication $C \to C \sm_A C$ and counit $C \to A$.
Consider the (forgetful, cofree)-adjunction between modules over $A$ and comodules over $C$.
\[ \xymatrix@C=4pc{ (\Mod_A)^{C} \ar@<1ex>[r]^-V \ar@{}[r]|-\perp & \Mod_A \ar@<1ex>[l]^-{-  \sm_A C}.}\]

\begin{thm}\label{thm:comod_spectra}
There exists a model structure on $(\Mod_A)^{C}$ left-induced by the adjunction above from the injective model structure on $\Mod_A$.
\end{thm}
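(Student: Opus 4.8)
The plan is to apply the dual Quillen path object argument, Theorem \ref{thm:quillen-path}, to the forgetful/cofree adjunction with left adjoint $V \colon (\Mod_A)^C \to \Mod_A$ and right adjoint $-\sm_A C$, taking as base the injective model structure on $\Mod_A$ of Corollary \ref{cor.of.2.2.3}. That base is accessible: it is left-induced from the combinatorial, hence accessible, injective stable model structure on $\SpS$, and by the Acyclicity Theorem (Corollary \ref{cor:acyclicity-reduction}) a left-induced model structure of an accessible one is again accessible. The category $(\Mod_A)^C$ is the category of coalgebras for the comonad $-\sm_A C$; since $-\sm_A C$ is a left adjoint, with right adjoint the internal hom $\hom_A(C,-)$, it preserves all colimits and is in particular accessible, so the endofunctor $V\circ(-\sm_A C)=-\sm_A C$ is accessible and $(\Mod_A)^C$ is locally presentable by the argument of the remark following Theorem \ref{thm:cooperad-cylinder} (appealing to \cite[Proposition A.1]{ching-riehl}). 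Because $-\sm_A C$ preserves all colimits, the comonadic forgetful functor $V$ creates them; in particular coproducts and fold maps in $(\Mod_A)^C$ are computed in $\Mod_A$.

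First I would dispatch conditions (1) and (2) of Theorem \ref{thm:quillen-path}, namely that $(\Mod_A)^C$ admits underlying-cofibrant replacements. These hold trivially: all objects of $\SpS$ are cofibrant in the injective stable model structure, and the cofibrations of $\Mod_A$ are created by the zero-object-preserving forgetful functor to $\SpS$, so every $A$-module is injectively cofibrant. One may thus take $QX=X$ and $\epsilon_X=\id$.

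The real content is condition (3), good cylinder objects, which I would build exactly as in the proof of Theorem \ref{thm:a-mod-cylinder}. Using that $\Mod_A$ is simplicial, as established above, fix the good cylinder $\partial\Delta^1 \hookrightarrow \Delta^1 \xrightarrow{\sim} \Delta^0$ on the simplicial unit. The key observation is that for a $C$-comodule $X$ with coaction $X \to X\sm_A C$, the simplicial tensoring lifts to comodules: the composite $X\otimes K \to (X\sm_A C)\otimes K \cong (X\otimes K)\sm_A C$, which uses that tensoring with a simplicial set commutes with $-\sm_A C$, endows $X\otimes K$ with a natural $C$-comodule structure, and $V(X\otimes K)=VX\otimes K$. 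Applying $X\otimes-$ to $\partial\Delta^1 \hookrightarrow \Delta^1 \to \Delta^0$, and using $X\otimes\Delta^0\cong X$ together with preservation of coproducts, yields a factorization
\[ X\coprod X \rightarrowtail X\otimes\Delta^1 \xrightarrow{\ \ } X \]
of the fold map in $(\Mod_A)^C$. Its image under $V$ is the standard simplicial cylinder on the cofibrant object $VX$ in $\Mod_A$: the left map is a cofibration by the pushout-product axiom and the right map a weak equivalence by Ken Brown's lemma. Hence condition (3) holds and Theorem \ref{thm:quillen-path} produces the left-induced model structure.

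The main obstacle is precisely the verification that the chosen cylinder lifts along $V$: that the coaction is compatible with the tensoring, via the interchange isomorphism $(X\sm_A C)\otimes K\cong(X\otimes K)\sm_A C$, so that $X\otimes\Delta^1$ is genuinely a $C$-comodule and the factorization maps are comodule morphisms. Once this is in hand, everything else reduces to the good-cylinder properties already known in $\Mod_A$, since $V$ creates the relevant colimits and strictly commutes with the tensoring.
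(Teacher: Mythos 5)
Your proposal is correct and follows essentially the same route as the paper: both apply Theorem \ref{thm:quillen-path}, observe that all objects of $\Mod_A$ are injectively cofibrant so that underlying-cofibrant replacement is trivial, and produce the good cylinder by lifting the simplicial tensoring to $C$-comodules via the interchange isomorphism $(X \sm_A C) \otimes K \cong (X \otimes K) \sm_A C$, applied to the standard simplicial cylinder on the unit. The additional remarks on accessibility of the base and local presentability of $(\Mod_A)^C$ are correct and merely make explicit hypotheses the paper leaves implicit.
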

\begin{proof}
This is an application of Theorem \ref{thm:quillen-path}.  First, note that tensoring with a simplicial set lifts to comodules.  Given a simplicial set $K$ and a $C$-comodule $M$ with coaction $\rho: M \to M \sm_A C$, then $K \otimes M = \Sigma^{\infty} K \sm M $ is a $C$-comodule with coaction 
$$ \Sigma^{\infty} K \sm M \xrightarrow{\Sigma^{\infty}K \sm \rho}  \Sigma^{\infty} K \sm (M \sm_A C) \cong (\Sigma^{\infty} K \sm M) \sm_A C. $$
There is a good cylinder object in $\sSet$ given by the factorization 
$$S^0 \coprod S^0 \rightarrowtail \Delta[1]_+ = I \xrightarrow{\sim} S^0.$$
The smash product of a comodule $M$ with this factorization in $\sSet$ lifts to $(\Mod_A)^{C}$.  Since $\Mod_A$ is simplicial and all objects are cofibrant, this defines the good cylinder objects in $ (\Mod_A)^{C}$ needed to apply Theorem \ref{thm:quillen-path}. 
\end{proof}

\begin{rmk}
The techniques we apply to construct certain model structures in the differential graded context in Section~\ref{section:dg_examples} do not easily extend to spectra, since in several places there we use the fact that coproducts and products agree in the differential graded context.
\end{rmk}


\section{DG examples}\label{section:dg_examples}

Let  $R$ be any commutative ring. In this section we apply the results of section \ref{sec:acyclicity}, particularly Theorems \ref{thm:quillen-path} and \ref{thm:square}, to establish the existence of 
\begin{itemize}
\item a  model category structure on the category $\cP\text{-}\Alg_{R}$ of differential graded (dg) $R$-algebras over any {$\Sigma$-split} dg operad $\cP$, where weak equivalences are underlying chain homotopy equivalences (Proposition \ref{prop:alg-mod-cat}); 
\medskip

\item  two  model category structures on the category $\cQ\text{-}\Coalg_{R}$ of dg $\Op Q$-coalgebras, for nice enough dg $R$-cooperads $\Op Q$ (in particular for the coassociative cooperad), where weak equivalences are either underlying chain homotopy equivalences or quasi-isomorphisms (Theorem \ref{thm:cooperadsQQ'});
\medskip

\item  two  model category structures on the category $\Ch_{R}^{C}$ of dg $C$-comodules over a coassociative dg $R$-coalgebra $C$, where weak equivalences are either underlying chain homotopy equivalences or quasi-isomorphisms (Corollary \ref{cor:comodule}); 
 \medskip

\item  two  model category structures on the category $(\cP, \cQ)\text{-}\Bialg_{R}$  of dg $(\cP, \cQ)$-bialgebras, for any  {$\Sigma$-split} dg $R$-operad $\cP$ and for a nice enough dg $R$-cooperad $\Op Q$ over $R$, where weak equivalences are underlying chain homotopy equivalences in both cases (Theorem \ref{thm:bialg});
\medskip

\item  two  model category structures on the category $\Alg_{R}^{H}$ of dg $H$-comodule algebras over an associative dg $R$-bialgebra $H$, where weak equivalences are underlying chain homotopy equivalences in both cases (Theorem \ref{thm:comod-alg}); and
\medskip

\item  two  model category structures on the category $\Mod_{A}^{C}$ of dg $C$-comodules over an $A$-coring $C$, where $A$ is an associative dg $R$-algebra, where weak equivalences are either underlying chain homotopy equivalences or quasi-isomorphisms (Theorem \ref{thm:a-coring}).
\end{itemize}

The model category structures from which we induce all of the structures listed above are  the Hurewicz model structure $(\Ch_R)_{\mathrm{Hur}}$  (Example \ref{ex:chain}) and the injective model structure $(\Ch_{R})_{\mathrm{inj}}$ \cite[Theorem 2.3.13]{hovey} on the category of unbounded chain complexes over $R$. Recall that in $(\Ch_R)_{\mathrm{Hur}}$ weak equivalences are homotopy equivalences, cofibrations are degreewise split monomorphisms, and fibrations are degreewise split epimorphisms, while in $(\Ch_{R})_{\mathrm{inj}}$ weak equivalences are quasi-isomorphisms,  and cofibrations are degreewise monomorphisms. The Hurewicz model category structure is enriched cofibrantly generated, as discussed in \S\ref{sec:enriched-cof-gen}. Hovey proved that $(\Ch_{R})_{\mathrm{inj}}$ is cofibrantly generated, in the usual sense of the word. With respect to the usual tensor product and $\hom$ of chain complexes,  $(\Ch_{R})_{\mathrm{Hur}}$ is a monoidal model category \cite[Theorem 1.15]{barthel-may-riehl}.

Throughout this section $\otimes$ denotes the usual tensor product on $\Ch _{R}$, i.e., tensoring over $R$.

\subsection {Preliminaries}
We begin this section with a brief overview of those notions concerning chain complexes and their algebraic structures that are necessary to stating and proving our results below.

\subsubsection{Cylinders}\label{sec:cyl}

The following functorial  construction  of a good cylinder in $(\Ch_{R})_{\mathrm{Hur}}$, mentioned briefly in Example \ref{ex:chain}, proves very useful for establishing the existence of the model category structures below.  Recall the definition of the interval complex $I$ from Example \ref{ex:chain}, and let $t$ denote a generator of $I_{1}$ and $\del_{0}t$, $\del _{1}t$ generators of $I_{0}$ so that $dt=\del_{0}t-\del_{1}t$.  For any chain complex $X$, let $\Cyl(X)=X\otimes I$, 
$$i:X\oplus X \lra \Cyl (X):(x_{0},x_{1}) \mapsto x_{0}\otimes \del_{0}t + x_{1}\otimes \del_{1}t,$$
and 
$$q:\Cyl (X) \lra X: \begin{cases} x\otimes t &\mapsto 0\\ x\otimes \del_{0}t &\mapsto x\\ x \otimes \del_{1}t&\mapsto x.\end{cases}.$$
It is clear that $i$ and $q$ are, respectivly, a cofibration and a weak equivalence in both $(\Ch_{R})_{\mathrm{inj}}$ and $(\Ch_{R})_{\mathrm{Hur}}$.  Indeed, 
$$i_{k}:X \lra \Cyl(X): x\mapsto x\otimes \del_{k}t$$
is a chain homotopy inverse to $q$ for $k=1,2$.

\subsubsection{Operads and cooperads}\label{sssec:operad}

We recall here briefly those elements of the theory of (co)operads and their (co)algebras that are necessary to understanding the results below. We refer the reader to \cite{loday-vallette} for further details.

To begin we remind the reader of  three different and useful monoidal structures on any cocomplete symmetric monoidal category $(\V, \otimes, \uI)$. Given symmetric sequences $\Op{X}$ and $\Op{Y}$ of objects in $\V$, their \emph{levelwise tensor product},  $\Op X\otimes \Op Y$, is the symmetric sequence defined in arity $n$ by
$$(\Op X\otimes \Op Y)(n)= \Op X(n) \otimes \Op Y(n),$$ 
enowed with the diagonal $\Sigma_{n}$-action, while  their \emph{graded tensor product}, $\Op{X} \odot \Op{Y}$, is given by
\[
   ( \Op{X} \odot \Op{Y}) (n) = \coprod_{m = 0}^{n} \left(\Op{X}\left(m\right) \otimes \Op{Y}\left(n-m\right)\right) \underset{\Sigma_{m} \times \Sigma_{n-m}}{\otimes} R[\Sigma_{n}],
\]
and  their  \emph{composition product}, $\Op X \circ \Op Y$,  by
\[
    \Op{X} \circ \Op{Y} = \coprod_{m \geq 0} \Op{X}(m)
    \underset{\Sigma_{m}}{\otimes} \Op{Y}^{\odot m}.
\]
An \emph{$\V$-operad} is a symmetric sequence in $\V$ endowed with the structure of a monoid with respect to the composition product. {A $\V$-operad $\cP$ is \emph{$\Sigma$-split} if it is a retract of the levelwise tensor product of operads $\cP \otimes \Op A$, where $\Op A$ denotes the \emph{associative operad}, given by $\Op A(n)=\uI [\Sigma_{n}]= \coprod_{\Sigma_{n}}\uI$ for all $n$.} 

Given a $\V$-operad $\cP$, a \emph{$\cP$-algebra} is an object $A$ in $\V$ equipped with a family of morphisms in $\V$
$$\cP (n)\otimes _{\Sigma_{n}} A^{\otimes n} \lra A$$
satisfying appropriate associativity and unitality conditions.  We denote the category of $\cP$-algebras by $\cP\text{-}\Alg_{\V}$ or by $\cP\text{-}\Alg_{R}$, when $\V=\Ch_{R}$.  There is an adjunction
$$\adjunct {\V}{\Op P\text{-}\Alg_{\V}}{F_{\Op P}}{U_{\Op P}},$$
where $F_{\Op P}$ is the \emph{free $\Op P$-algebra} functor, defined by 
$$F_{\Op P}(X)= \coprod_{n\geq 0}\Op P(n)\otimes_{\Sigma_{n}}X^{\otimes n}.$$
In the case $\V=\Ch_{R}$, note that {if $\Op P$ is $\Sigma$-split, then $F_{\Op P}$ preserves chain homotopy equivalences, since for every $X$, $F_{\Op P}(X)$ is a retract of 
$$F_{\Op P\otimes \Op A}(X)\cong \coprod_{n\geq 0}\Op P(n)\otimes X^{\otimes n}.$$
Note that the isomorphism above is built from $\Sigma_{n}$-equivariant isomorphisms 
$$\cP(n)\otimes_{R} R[\Sigma_{n}] \xrightarrow \cong \cP(n)\otimes_{R} R[\Sigma_{n}]\colon p\otimes \sigma \mapsto p\cdot \sigma^{-1}\otimes \sigma,$$
where the source is equipped with the diagonal $\Sigma_{n}$-action and the target with the free $\Sigma_{n}$-action.}  In particular, if $X$ is contractible, then so is $F_{\Op P}(X)$ {whenever $\Op P$ is $\Sigma$-split}.

Note that if $\V$ is locally presentable, then so is $\cP\text{-}\Alg_{\V}$, by \cite[2.78]{adamek-rosicky}.

The dual case is somewhat more delicate to set up. Let $\Op J$ denote  the unit for the composition product, i.e.,  the symmetric sequence in $\V$ that is $\uI$ in arity $1$ and the 0-object in all other arities. A \emph{$\V$-cooperad} consists of a symmetric sequence $\Op{Q}$ in $\V$, together with a counit $\varepsilon : \Op{Q} \lra \Op{J}$ and a family of chain maps
\begin{equation}\label{eqn:coop-struct}
    \psi_{\vec{n}} : \Op{Q}(n) \lra \Op{Q}(m) \otimes
    \Op{Q}(n_{1}) \otimes \cdots \otimes \Op{Q}(n_{m})
\end{equation}
for all sequences $\vec n=(n_{1}, \ldots, n_{m})$ of non-negative integers with $\sum_{j} n_{j} = n$ and for all $n$, satisfying coassociativity, counitality, and equivariance properties.

If $\Op{Q}(0)$ is the 0-object, then the family of morphisms (\ref{eqn:coop-struct}) yields a sequence of $\Sigma_{n}$-equivariant morphisms,
\[
    \tilde{\psi}_{n} : \Op{Q}(n) \lra \bigoplus_{m = 1}^{n} \left(
        \Op{Q}(m) \otimes \Op{Q}^{\odot m}(n)\right)^{\Sigma_{m}}
\]
and thus a morphism of symmetric sequences, $\psi : \Op{Q} \lra \Op{Q} \circ \Op{Q}$,
obtained by composing with the natural map from fixed points to orbits.
By~\cite[Proposition 1.1.15]{fresse:99}, the family of morphisms (\ref{eqn:coop-struct}) can  be recovered from $\psi$.

Associated to any symmetric sequence $\Op X$ in $\V$, there is an endofunctor 
$$\Gamma_{\Op X}: \V\lra \V,$$ 
which is defined on objects by
\[
	\Gamma_{\Op{X}}(C) = \coprod_{n \geq 0} \left( \Op{X}(n) \otimes C^{\otimes n} \right)^{\Sigma_{n}}.
\]
If $\Op{Q}$ is a cooperad such that $\Op Q(0)$ is the $0$-object, then the functor $\Gamma_{\Op{Q}}$ is the underlying functor of a comonad on $\V$. An \emph{$\Op{Q}$-{coalgebra}} is  a coalgebra for this comonad. 
More explicitly, a $\Op{Q}$-coalgebra consists of a chain complex $C$ together with structure maps
$$\rho_{n}:C \lra \Op{Q}(n) \otimes C^{\otimes n}$$ 
for all $n>0$ that are appropriately counital and coassociative.  
Only finitely  many of the $\rho_{n}$ are non-trivial, and thus all $\Op Q$-coalgebras are \emph{conilpotent}.  In a slight abuse of  notation, we write $\Gamma_{\Op Q}$ for the cofree $\Op Q$-coalgebra functor.

By \cite[Proposition A.1]{ching-riehl}, the category $\cQ\text{-}\Coalg_{\V}$ of $\cQ$-coalgebras in $\V$ is locally presentable if there is some  regular cardinal $\kappa$ such that $\V$ is locally $\kappa$-presentable, and $\Gamma_{\cQ}$ preserves $\kappa$-filtered colimits.

It is easy to see that the levelwise tensor product lifts to the category of dg-cooperads.

\subsubsection{Bar and cobar constructions}

Fix a commutative ring $R$. Let $\cP$ be a dg operad and $\cQ$ a dg cooperad over $R$.  A \emph{twisting morphism}  $\tau \colon \cQ \lra \cP$ encodes the information of an operad morphism $\bold\Omega \cQ \lra \cP$ or, equivalently, of a cooperad morphism $\cQ \lra \mathbf{Bar}\cP$, where $\mathbf{Bar}$ and $\bold\Omega$ denote the operadic bar and cobar functors, for the definitions and properties of which we refer the reader to \cite{loday-vallette}.    A twisting morphism $\tau\colon\cQ \lra \cP$ induces an adjunction 
$$\adjunct {\cQ\text{-}\Coalg^{\mathrm{conil}}}{\cP\text{-}\Alg,}{\Om_{\tau}}{\Bar_{\tau}}$$
where $\cQ\text{-}\Coalg^{\mathrm{conil}}$ denotes the category of conilpotent $\cQ$-coalgebras.

Let $\Coalg^{\mathrm{conil}, \eta}_{R}$ and $\Alg^{\ve}_{R}$ denote the categories of conilpotent, coaugmented dg $R$-coalgebras and  of augmented dg $R$-algebras, respectively. The classical cobar-bar adjunction
\begin{equation}\label{eqn:bar-cobar}
\adjunct{\Coalg^{\mathrm{conil}, \eta}_{R}}{\Alg^{\ve}_{R}}{\Omega}{\Bar},
\end{equation}
which is the special case of the adjunction above for the usual twisting morphism between the coassociative cooperad and the associative operad, is a key tool for understanding and comparing certain model category structures that we construct in this section.   We recall here the definition of these functors.

Let $T$ denote the endofunctor on the category of  graded $R$-modules given by
$$TX=\oplus _{n\geq 0}X^{\otimes n},$$
where $X^{\otimes 0}=R$.  An element of the summand $X^{\otimes n}$ of $TX$ is a sum of terms denoted $x_{1}|\cdots |x_{n}$, where $x_{i}\in V$ for all $i$. Note that $T$ underlies both the free associative algebra functor, also denoted $T$, with multiplication given by concatenation,  and the cofree coassociative coalgebra functor $T^{\mathrm{co}}$, with comultiplication given by taking sums of all possible splittings of a tensor words.

The \emph {suspension} endofunctor $s$ on the category of graded $R$-modules is defined on objects $X=\bigoplus _{i\in \mathbb Z} X_ i$ by
$(sX)_ i \cong X_ {i-1}$.  Given a homogeneous element $x$ in
$X$, we write $sx$ for the corresponding element of $sX$. The suspension $s$ admits an obvious inverse, which we denote $\si$.

The \emph{bar construction} functor $\Bar$ is defined by 
$$\Bar A=\left(T^{\mathrm{co}} (s\overline A), d_{\Bar}\right)$$
where $\overline A$ denotes the augmentation ideal of $A$, and if $d$ is the differential on $A$, then
$$\pi\circ d_{\Bar}(sa)=-s(da)$$
and
$$\pi\circ d_{\Bar}(sa|sb)=(-1)^{|a|}s(ab),$$
where $\pi: T(s\overline A)\lra s\overline A$ is the projection. The entire differential is determined by its projection onto $s\overline A$, since the graded $R$-module underlying $\Bar A$ is naturally a cofree coassociative coalgebra, with comultiplication given by splitting of words. 
 
 The \emph{cobar construction} functor $\Om $ is defined by 
$$\Om C= \left(T (\si \overline C), d_{\Om}\right)$$
where $\overline C$ denotes the coaugmentation coideal of $C$, and if $d$ denotes the differential on $C$ and $c$ is a homogeneous element of $C$, then
$$d_{\Om}(\si c)=-s(dc) + (-1)^{|c_{i}|} \si c_{i}|\si c^{i},$$ 
where the reduced comultiplication applied to $c$ is $c_{i}\otimes c_{}{}^{i}$ (using Einstein implicit summation notation).  The entire differential is determined by its restriction to $\si \overline C$, since the graded $R$-module underlying $\Om C$ is naturally a free associative algebra. 

For further details of this adjunction, we refer the reader to \cite{husemoller-moore-stasheff} or \cite {neisendorfer}.

\subsection{Algebras over operads}\label{sec:alg-r}
Let $R$ be any commutative ring, and let $\Op P$ be an operad in $\Ch_{R}$.  It follows from \cite[Proposition 4.1]{berger-moerdijk-operads} that if $\Op P$ is \emph{$\Sigma$-split}, then the category $\Op P\text{-}\Alg_{R}$ of $\Op P$-algebras admits a model category structure right-induced from the projective model category structure on $\Ch_{R}$ by the forgetful functor $U\colon \Op P\text{-}\Alg_{R}\lra \Ch_{R}$.  

We prove here an {analogous result for the Hurewicz model structure and provide a new proof in the case of the projective model category structure.}

\begin{prop}\label{prop:alg-mod-cat} Let $R$ be any commutative ring, and let $\Op P$ be {a $\Sigma$-split} operad in $\Ch_{R}$.  The forgetful functor $U\colon \Op P\text{-}\Alg_{R}\lra \Ch_{R}$ creates a right-induced model category structure on $\cP\text{-}\Alg_{R}$, when $\Ch_{R}$ is endowed with the Hurewicz model structure or the projective model structure.
\end{prop}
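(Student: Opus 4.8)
The plan is to deduce the proposition from the Acyclicity Theorem (Corollary~\ref{cor:acyclicity-reduction}) by checking a single containment of lifting classes, via a path-object argument dual to the one in Theorem~\ref{thm:quillen-path}. Since $\Ch_R$ is locally presentable and the free/forgetful adjunction $F_\cP \dashv U$ exhibits $\cP\text{-}\Alg_R$ as the category of algebras for an accessible monad, $\cP\text{-}\Alg_R$ is again locally presentable (see \S\ref{sssec:operad}). Both the Hurewicz and the projective model structures on $\Ch_R$ are accessible: the former by Theorem~\ref{thm:acc-awfs}, since it is enriched cofibrantly generated, and the latter because it is combinatorial. Thus Corollary~\ref{cor:acyclicity-reduction}(1), applied to $F_\cP \dashv U$ with the left-hand adjunction taken to be trivial, produces the right-induced weak factorization systems on $\cP\text{-}\Alg_R$ and reduces existence of the right-induced model structure to the acyclicity condition $\llp{U^{-1}\Fib} \subset U^{-1}\WE$. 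Note that in both model structures on $\Ch_R$ every object is fibrant, so every object of $\cP\text{-}\Alg_R$ is $U$-fibrant.

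The heart of the argument, and the step I expect to be the main obstacle, is the construction of a good path object in $\cP\text{-}\Alg_R$ whose defining maps are detected by $U$. Recall from \S\ref{sec:cyl} that the interval complex $I$ carries the structure of a coassociative, counital dg $R$-coalgebra (the Alexander--Whitney coalgebra on the chains of $\Delta[1]$), with counit $\varepsilon \colon I \to R$ and with the two endpoints assembling into a coalgebra map $R \oplus R \to I$. For a $\cP$-algebra $A$, I would equip $\hom(I,A)$ with a $\cP$-algebra structure as follows. Since $\cP$ is $\Sigma$-split, there are operad maps $\cP \xrightarrow{s} \cP \otimes \Op A \xrightarrow{r} \cP$ with $r s = \id$; restricting $A$ along $r$ makes it a $(\cP \otimes \Op A)$-algebra. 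Using the identification from \S\ref{sssec:operad} that rewrites the $(\cP \otimes \Op A)$-algebra structure, after absorbing the free $\Sigma_n$-action, as maps $\cP(n) \otimes A^{\otimes n} \to A$ with no passage to coinvariants, the iterated comultiplication $I \to I^{\otimes n}$ together with these structure maps equips $\hom(I,A)$ with a $(\cP \otimes \Op A)$-algebra structure; restricting back along $s$ yields a $\cP$-algebra structure on $\hom(I,A)$. This is precisely where $\Sigma$-splitness is essential: it permits the use of a merely coassociative (not cocommutative) interval, no cocommutative interval being available over a general ring $R$. Applying $\hom(-,A)$ to the coalgebra maps $\varepsilon$ and $R \oplus R \to I$ then produces $\cP$-algebra maps
\[ A \cong \hom(R,A) \xrightarrow{\ \sigma\ } \hom(I,A) \xrightarrow{\ (p_0,p_1)\ } \hom(R\oplus R, A) \cong A \times A \]
factoring the diagonal. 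Because $\varepsilon$ is a chain homotopy equivalence and $R \oplus R \rightarrowtail I$ is a split monomorphism, $U\sigma$ is a chain homotopy equivalence (hence a weak equivalence in either model structure) and $U(p_0,p_1)$ is a degreewise split epimorphism (hence a fibration in either model structure). Thus $\hom(I,A)$ is a good path object in $\cP\text{-}\Alg_R$.

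With the path object in hand, the acyclicity condition follows by dualizing the argument of Theorem~\ref{thm:quillen-path}. Given $j \colon A \to B$ in $\llp{U^{-1}\Fib}$, fibrancy of $A$ yields a retraction $r \colon B \to A$ with $r j = \id_A$, obtained by lifting $\id_A$ against the terminal map $A \to \ast$. The maps $\id_B$ and $jr$ agree after precomposition with $j$, so the square with top $\sigma_B\, j$, left $j$, bottom $(\id_B, jr)$, and right $(p_0,p_1) \colon \hom(I,B) \to B \times B$ commutes; since $j \in \llp{U^{-1}\Fib}$ and $(p_0,p_1) \in U^{-1}\Fib$, a lift $H$ exists, and $UH$ exhibits $U(jr)$ as chain homotopic to $\id_{UB}$, so that $U(jr)$ is a chain homotopy equivalence and in particular a weak equivalence in either structure. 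Applying the 2-of-6 property of $U^{-1}\WE$ to the composable triple $UA \xrightarrow{Uj} UB \xrightarrow{Ur} UA \xrightarrow{Uj} UB$, whose consecutive composites $U(rj) = \id$ and $U(jr)$ are weak equivalences, we conclude that $Uj$ is a weak equivalence. This establishes $\llp{U^{-1}\Fib} \subset U^{-1}\WE$, so the right-induced model structure exists and is created by $U$ in both the Hurewicz and the projective cases.
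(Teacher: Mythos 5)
Your proof is correct, but it verifies the acyclicity condition by a genuinely different argument from the one in the paper. Both proofs begin the same way --- local presentability of $\cP\text{-}\Alg_R$, accessibility of the Hurewicz and projective structures on $\Ch_R$, and Corollary \ref{cor:acyclicity-reduction} reduce everything to $\llp{U^{-1}\Fib}\subset U^{-1}\WE$ --- and both end with an appeal to the 2-of-6 property. Where you build a path object, the paper builds a factorization: given $i\colon A\to B$ in $\llp{U^{-1}\Fib}$, it factors $i$ as $A\to A\coprod F_{\Op P}\big(B\oplus s^{-1}B, D\big)\to B$, freely adjoining a cone on $B$; the first map is an underlying chain homotopy equivalence because $\Sigma$-splitness makes $F_{\Op P}$ preserve contractibility, the second is an underlying split surjection and hence a fibration in either structure, and lifting $i$ against it sets up the 2-of-6 argument. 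Thus in the paper $\Sigma$-splitness enters through the behaviour of the free algebra functor on contractible complexes, whereas in your argument it enters through the untwisting isomorphism identifying $(\cP\otimes\Op A)$-algebras with algebras over the underlying non-symmetric operad, which is exactly what allows the merely coassociative (non-cocommutative) interval coalgebra to define a convolution $\cP$-algebra structure on $\hom(I,A)$. That convolution step is the one place in your write-up demanding real care --- one must check that the non-equivariant structure maps $\cP(n)\otimes\hom(I,A)^{\otimes n}\to\hom(I,A)$ satisfy operadic associativity, which works precisely because no passage to $\Sigma_n$-coinvariants is required --- but it goes through, and it buys you an explicit path object on $\cP\text{-}\Alg_R$ and an argument that is the exact dual of the cylinder argument of Theorem \ref{thm:quillen-path} used throughout the rest of the paper. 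The paper's route is shorter and needs only the already-recorded fact that $F_{\Op P}$ preserves chain homotopy equivalences for $\Sigma$-split $\Op P$.
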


\begin{proof} By the observations above and Theorem \ref{thm:right-awfs}, we can right-induce both weak factorization systems, so it remains only to establish the acyclicity condition,  i.e., that $\llp\,{U^{-1}\Fib_{\mathrm{Hur}}} \subset U^{-1}\WE_{\mathrm{Hur}}$.
 
Given $i \colon A \lra B$ with the left lifting property against $U^{-1}\Fib_{\mathrm{Hur}}$, consider the following factorization
\[ \xymatrix@R=4pc@C=4pc{ A \ar[d]_i \ar[r]^-{\in U^{-1}\WE_{\mathrm{Hur}}} & A \coprod \big (F_{\Op P}(B \oplus s^{-1}B,D)\big) \ar[d]^{i+q\in U^{-1}\Fib_{\mathrm{Hur}}} \\ B \ar@{=}[r] \ar@{-->}[ur] & B}\] 
where $F_{\Op P}$ is the free $\Op P$-algebra functor, $\coprod$ denotes the coproduct of $\Op P$-algebras,  $D(b)= s^{-1}b$ for all $b\in B$, and $q$ is specified by $q(b)=b$ and $q(s^{-1}b)=db$ for all $b\in B$, where $d$ is the differential on $B$. The first map is obviously an underlying homotopy equivalence, since $F_{\Op P}(B \oplus s^{-1}B, D)$ is contractible, and $i + q$ is an underlying Hurewicz fibration, since it is split as a map of graded $R$-modules. Applying 2-of-6, we conclude that $i \in U^{-1}\WE_{\mathrm{Hur}}$ and therefore the right-induced model structure on $\Op P\text{-}\Alg_R$ exists.

Note that the same argument works if we start with the projective model structure on $\Ch_{R}$, since the first map in the factorization above is a quasi-isomorphism and the second one is an epimorphism.
\end{proof}

We denote this model structure $(\Op P\text{-}\Alg_{R})_{\mathrm{Hur}}$. We refer the reader to Proposition \ref{prop:fib-cofib} for the construction of an explicit cofibrant replacement functor when $\Op P =\Op A$, the associative operad.

\begin{rmk} The proof above does not work for $(\Ch_{R})_{\mathrm{inj}}$, as the map $i+q$ is usually not an injective fibration.  We do not know if it is possible to replace $(\Ch_{R})_{\mathrm {Hur}}$ by $(\Ch_{R})_{\mathrm{inj}}$ in the statement of the proposition above, for any operad $\cP$.
\end{rmk}

\subsection{Coalgebras over cooperads}\label{sec:coalg}
There is a long history of efforts to establish model category structures on categories of coalgebras over cooperads.  Quillen established the first model category structure on a particular category of coalgebras over a comonad,  the category of 1-connected, cocommutative dg coalgebras over $\mathbb Q$ in \cite {quillen}. Thirty years later Getzler and Goerss proved the existence of a model category structure on the category of dg coalgebras over a field in an unpublished manuscript \cite{getzler-goerss}. Around the same time Blanc provided conditions under which a ``right'' model category structure could be transfered from an underlying model category to a category of coalgebras  \cite [Theorem 7.6]{blanc}.  Hinich then generalized Quillen's work,  defining a simplicial model category structure on the category of unbounded cocommutative dg-coalgebras over a field of characteristic zero, where the class of weak equivalences consists of those dg-coalgebra maps that induce a quasi-isomorphism on the associated Chevalley-Eilenberg complexes  \cite {hinich}. 

In 2003 Aubry and Chataur proved the existence of  model category structures on categories of  certain cooperads and coalgebras over them, when the underlying category is that of  unbounded chain complexes over a field \cite{aubry-chataur}. Smith established results along the same lines in \cite{smith} in 2011.  In 2010, Stanculescu applied the dual of the Quillen path-object argument to establish a model structure on comonoids in nice enough monoidal model category, given a functorial cylinder object for comonoids  \cite{stanculescu}.    A general existence result for model structure on categories of coalgebras over a given comonad on a model category was proved in \cite{hess-shipley}.

Most recently Drummond-Cole and Hirsh applied the main theorem of \cite{six-author} to establish the existence of a poset of model category structures on categories of coalgebras over coaugmented, weight-graded cooperads in the category of chain complexes, either unbounded or bounded below, over a field, where the weak equivalences and cofibrations are created by the cobar construction associated to a twisting morphism \cite{cole-hirsh}.  Their work generalizes results of Vallette for Koszul cooperads \cite{vallette}.  Finally, in \cite{yalin} Yalin proved the existence of cofibrantly generated, left-induced model category structure on the category of dg-coalgebras over a reduced operad in the category of finite-dimensional vector spaces over a field.

Here we obtain as a consequence of Theorem \ref{thm:cooperad-cylinder} that for any commutative ring $R$ and any  nice enough dg-cooperad $\Op Q$,  the category of differential graded $\Op Q$-coalgebras admits a model category structure.

\begin{thm}\label{thm:cooperadsQQ'} Let $R$ be a commutative ring.  Let  $\Op Q$ be a cooperad on $\M$ such that the category $\cQ\text{-}\Coalg_{R}$ is locally presentable.  If there is a cooperad $\Op Q'$ in $\Ch_{R}$ equipped with a map $\Op Q \otimes \Op Q' \lra \Op Q$ of cooperads and such that $R$ admits the structure of a $\Op Q'$-coalgebra, extending to 
\begin{equation}\label{eq:unit-dgcylinder}  R\oplus R  \rightarrowtail I \xrightarrow{\sim} R\end{equation}
in $\cQ'\text{-}\Coalg_{R}$, where cofibrations and weak equivalences are created in $\Ch_{R}$,  then there exists model structures on the category $\cQ\text{-}\Coalg_{R}$ of  $\cQ$-coalgebras that are left-induced via the forgetful functor $V$
\[ \xymatrix@R=4pc@C=4pc{ \cQ\text{-}\Coalg_{R} \ar@<1ex>[r]^-V \ar@{}[r]|-\perp & \Ch_R \ar@<1ex>[l]^-{\Gamma_\cQ}}\] 
from $(\Ch_{R})_{\mathrm{Hur}}$ and from $(\Ch_{R})_{\mathrm{inj}}$.
\end{thm}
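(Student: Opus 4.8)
The plan is to deduce this as two applications of Theorem~\ref{thm:cooperad-cylinder}, taking $\boxtimes = \otimes$ to be the usual tensor product on $\Ch_R$ (so that $\boxtimes^{*}$ is the levelwise tensor product of cooperads appearing in the hypothesis, and $X \otimes \uI \cong X$ with $\uI = R$), and keeping the role of $\V$ fixed as the \emph{Hurewicz} monoidal model category $(\Ch_R)_{\mathrm{Hur}}$ in both runs. What changes between the two applications is the category $\M$ in which the coalgebras live: for the first model structure I take $\M = (\Ch_R)_{\mathrm{Hur}}$, and for the second I take $\M = (\Ch_R)_{\mathrm{inj}}$. In each case $\M = \Ch_R$ as a monoidal category, equipped with an accessible model structure: the Hurewicz structure is accessible because it is enriched cofibrantly generated (Example~\ref{ex:chain}, Theorem~\ref{thm:acc-awfs}), and the injective structure is accessible because, by Hovey \cite[2.3.13]{hovey}, it is cofibrantly generated in the classical sense while $\Ch_R$ is locally presentable, hence combinatorial.

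Next I would verify the hypotheses on $\V$ and on $\boxtimes$ from the first two sentences of Theorem~\ref{thm:cooperad-cylinder}. That $(\Ch_R)_{\mathrm{Hur}}$ is a monoidal model category with cofibrant unit is \cite[Theorem~1.15]{barthel-may-riehl} together with the observation that $0 \rightarrowtail R$ is a degreewise split monomorphism. It then remains to check that, for cofibrant $X$ in $\M$, the functor $X \otimes - \colon (\Ch_R)_{\mathrm{Hur}} \to \M$ preserves finite coproducts, cofibrations, and weak equivalences between cofibrant objects. Finite coproducts (direct sums) are preserved by additivity. In the Hurewicz case ($\M = \V$) the remaining two properties are precisely the statement that $\M$ is a monoidal model category; in the injective case one checks them directly. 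A Hurewicz cofibration is a degreewise split monomorphism, and tensoring a degreewise split monomorphism with any complex again yields a degreewise split monomorphism (a degreewise retraction is preserved by $X \otimes -$), hence in particular an injective cofibration. A Hurewicz weak equivalence is a chain homotopy equivalence, which $X \otimes -$ preserves, and every chain homotopy equivalence is a quasi-isomorphism, i.e.\ an injective weak equivalence. The op-monoidal structure on $\otimes$ required by the theorem is the canonical one supplied by the symmetry and associativity of the tensor product on $\Ch_R$.

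The underlying-cofibrant-replacement hypothesis is automatic in both cases, since every object is cofibrant in each of $(\Ch_R)_{\mathrm{Hur}}$ and $(\Ch_R)_{\mathrm{inj}}$: the map $0 \to X$ is always a degreewise split monomorphism, and a fortiori a monomorphism. The remaining input---the cooperad $\Op Q$ with $\cQ\text{-}\Coalg_{R}$ locally presentable, the cooperad $\Op Q'$ with structure map $\Op Q \otimes \Op Q' \to \Op Q$, the $\Op Q'$-coalgebra structure on $R$, and the cylinder $R \oplus R \rightarrowtail I \xrightarrow{\sim} R$ in $\cQ'\text{-}\Coalg_{R}$---is supplied verbatim by the hypotheses; the one point to record is that the interval complex $I$ of Example~\ref{ex:chain} realizes a good cylinder whose structure maps are a cofibration and a weak equivalence in \emph{both} the Hurewicz and injective structures (Section~\ref{sec:cyl}), so that this datum, formed in $\V = (\Ch_R)_{\mathrm{Hur}}$, serves both applications. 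With all hypotheses in hand, Theorem~\ref{thm:cooperad-cylinder} yields the two left-induced model structures on $\cQ\text{-}\Coalg_{R}$.

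I expect the genuine subtlety to lie in the injective case, precisely in the decision \emph{not} to take $\M = \V = (\Ch_R)_{\mathrm{inj}}$. Over an arbitrary commutative ring the injective structure need not be a monoidal model category---the pushout--product axiom can fail because tensoring does not preserve quasi-isomorphisms without flatness---so it cannot play the role of $\V$, which must supply a genuinely monoidal cylinder for the unit. The resolution, and the technical heart of the argument, is that Theorem~\ref{thm:cooperad-cylinder} cleanly separates the two roles: $\V$ need only be monoidal (to build $\mathrm{Cyl}(\uI)$), while $\M$ need only carry an accessible model structure compatible with $\otimes$ through the mild preservation properties above. Keeping $\V = (\Ch_R)_{\mathrm{Hur}}$ throughout, and exploiting that the interval cylinder is simultaneously Hurewicz- and injective-good while tensoring with a degreewise split monomorphism preserves monomorphisms and sends homotopy equivalences to quasi-isomorphisms, is exactly what makes the injective application go through.
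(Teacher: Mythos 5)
Your proof is correct and follows essentially the same route as the paper: the paper likewise deduces both structures from Theorem~\ref{thm:cooperad-cylinder} with $\boxtimes=\otimes$, taking $\M=\V=(\Ch_R)_{\mathrm{Hur}}$ in the Hurewicz case and, for the injective case, offering exactly your choice $\M=(\Ch_R)_{\mathrm{inj}}$, $\V=(\Ch_R)_{\mathrm{Hur}}$ (alongside the alternative $\V=(\Ch_R)_{\mathrm{proj}}$ via \cite[3.3]{shipley-HZ}), noting as you do that for this pair $\M$ is not a $\V$-model category so the individually enumerated hypotheses must be checked directly. Your verification of those hypotheses --- split monomorphisms tensor to monomorphisms, chain homotopy equivalences tensor to chain homotopy equivalences and hence quasi-isomorphisms --- is the content the paper leaves implicit.
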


We denote the two model category structures of the theorem above $(\cQ\text{-}\Coalg_{R})_{\mathrm{Hur}}$ and $(\cQ\text{-}\Coalg_{R})_{\mathrm{inj}}$.

\begin{proof}  This is a special case of Theorem \ref{thm:cooperad-cylinder}.  Note first that every object in both $(\Ch_{R})_{\mathrm{Hur}}$ and $(\Ch_{R})_{\mathrm{inj}}$ is cofibrant, so the condition on underlying-cofibrant replacements is trivially satisfied.   In the case of the Hurewicz structure, take $\M=\V= (\Ch_{R})_{\mathrm{Hur}}$ and $\boxtimes=\otimes$, and in the case of the injective structure, one can take $\M =(\Ch_{R})_{\mathrm{inj}}$, $\V= (\Ch_{R})_{\mathrm{proj}}$, and $\boxtimes=\otimes$ by~\cite[3.3]{shipley-HZ}
or $\M =(\Ch_{R})_{\mathrm{inj}}$, $\V= (\Ch_{R})_{\mathrm{Hur}}$, and $\boxtimes=\otimes$.  Note, each of these pairs satisfies the hypotheses of Theorem~\ref{thm:cooperad-cylinder}, but for the last pair, $\M$ is not a $\V$-model category.
\end{proof}

\begin{rmk} A $\cQ'$-coalgebra structure on the tensor unit $R$ corresponds to coherent choice of coaugmentation (or basepoint) in every arity of $\cQ'$. In other words, if $\cQ'$ is a cooperad in coaugmented chain complexes, then $R$ has a natural $\cQ'$-coalgebra structure.  
\end{rmk}

\begin{rmk}\label{rmk:Hopf-coop} If  $\cQ$ is Hopf cooperad, i.e., a cooperad in $\Alg_{R}$, then the multiplicative structure provides a map of cooperads $\cQ \otimes \cQ \lra \cQ$, and $\cQ$ is naturally coaugmented.
 It follows in this case that if the interval $I$ admits a $\cQ$-coalgebra structure, extending the natural $\cQ$-coalgebra structure on $R\oplus R$, then Theorem \ref{thm:cooperadsQQ'} holds for $\cQ$-coalgebras.
\end{rmk}

The special case of the category $\Coalg_{R}$ of {counital} coassociative coalgebras is worthy of separate mention.

\begin{cor}\label{cor:comonoids} Let $R$ be a commutative ring. There exist model structures on $\Coalg_R$ left-induced from $(\Ch_{R})_{\mathrm{Hur}}$ and from $(\Ch_{R})_{\mathrm{inj}}$ by the forgetful functor.
\end{cor}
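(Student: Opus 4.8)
This is the special case of Theorem~\ref{thm:cooperadsQQ'} in which $\cQ$ is the coassociative cooperad, so that $\cQ\text{-}\Coalg_R=\Coalg_R$; the plan is simply to verify the hypotheses of that theorem for this $\cQ$. Since each $\cQ(n)=R[\Sigma_n]$ is a ring, $\cQ$ is a Hopf cooperad, so by Remark~\ref{rmk:Hopf-coop} we may take $\cQ'=\cQ$, and the only substantive thing to produce---beyond local presentability of $\Coalg_R$---is a coassociative counital coalgebra structure on the interval complex $I$ of Example~\ref{ex:chain} that extends the canonical structure on $R\oplus R$ and for which the good cylinder $R\oplus R\rightarrowtail I\xrightarrow{\sim}R$ of \S\ref{sec:cyl} is a factorization of coalgebras.

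First I would dispatch local presentability: the cofree-coalgebra functor $\Gamma_\cQ$ is assembled from finite tensor powers $(-)^{\otimes n}$ and coproducts, each of which preserves sufficiently filtered colimits, so $V\Gamma_\cQ$ is accessible and $\Coalg_R$ is locally presentable by the remark following Theorem~\ref{thm:cooperad-cylinder} (i.e.\ by \cite[Proposition~A.1]{ching-riehl}). Next I would put the Alexander--Whitney structure on $I$, the one identifying $I$ with the normalized chains on the standard $1$-simplex: the degree-zero generators $\del_0 t,\del_1 t$ are group-like, the counit $\varepsilon$ sends $\del_0 t,\del_1 t\mapsto 1$ and $t\mapsto 0$, and the comultiplication on the degree-one generator is
\[ \Delta(t)=\del_0 t\otimes t+t\otimes\del_1 t. \]
The verifications are routine: using $dt=\del_0 t-\del_1 t$ one checks that $\Delta$ is a chain map, while coassociativity and counitality are the usual Alexander--Whitney identities. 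Finally one observes that the inclusion $R\oplus R\to I$ (sending the two units to $\del_0 t$ and $\del_1 t$) is a coalgebra map and that $q\colon I\to R$ is precisely the counit $\varepsilon$, hence also a coalgebra map. Because every object of both $(\Ch_R)_{\mathrm{Hur}}$ and $(\Ch_R)_{\mathrm{inj}}$ is cofibrant, the underlying-cofibrant-replacement hypothesis of Theorem~\ref{thm:cooperadsQQ'} is automatic, and the theorem then yields the two left-induced model structures, from $(\Ch_R)_{\mathrm{Hur}}$ and from $(\Ch_R)_{\mathrm{inj}}$.

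The main obstacle is exactly the bookkeeping of the previous paragraph. Solving the compatibility constraints for $\Delta(t)$ directly---that it be a chain map, be counital, and be compatible with the two structure maps $i$ and $q$---leaves a one-parameter family of candidates, and one must confirm that the Alexander--Whitney representative is simultaneously coassociative; pinning down the signs is the only place where care is needed. A secondary point, which the conventions of \S\ref{sssec:operad} fix, is to confirm the precise sense in which $\cQ\text{-}\Coalg_R$ is the category $\Coalg_R$ of the statement; once that identification is in place, the corollary is a direct specialization of Theorem~\ref{thm:cooperadsQQ'}.
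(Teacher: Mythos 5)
Your proof is correct and follows essentially the same route as the paper's: both deduce the corollary from Theorem \ref{thm:cooperadsQQ'} via Remark \ref{rmk:Hopf-coop}, checking local presentability through the formula for $\Gamma_{\Op A^{\mathrm{co}}}$ and equipping the interval $I$ with a coassociative comultiplication extending the one on the endpoints. The only difference is that you make explicit the Alexander--Whitney structure $\Delta(t)=\del_0 t\otimes t+t\otimes\del_1 t$ where the paper simply calls it ``obvious,'' which is a harmless (and welcome) elaboration.
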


\begin{proof}  Let $\Op A^{\text{co}} $ denote the coassociative cooperad.  For every chain complex $X$, $\Gamma_{\Op A^{\text{co}} }(X)=\bigoplus_{n\geq 0} X^{\otimes n}$, whence $\Gamma_{\Op A^{\text{co}} }$ preserves filtered colimits and thus $\Coalg_R$ is locally presentable.  The coassociative cooperad is a Hopf cooperad, and the interval complex $I$ admits an obvious coassociative comultiplication, extending the trivial comultiplication on either endpoint.  By Remark \ref{rmk:Hopf-coop} we can conclude.\end{proof}

We denote the two model category structures of the corollary above $(\Coalg_{R})_{\mathrm{Hur}}$ and $(\Coalg_{R})_{\mathrm{inj}}$. We refer the reader to Proposition \ref{prop:fib-cofib} for the construction of an explicit fibrant replacement for coaugmented coassociative coalgebras.

\begin{rmk} We conjecture that the results of Drummond-Cole and Hirsh \cite{cole-hirsh} mentioned above can be generalized to show that there is a poset of model category structures on $\cQ\text{-}\Coalg_{R}$ determined by the twisting morphisms with source $\cQ$, at least for nice enough cooperads $\cQ$.
\end{rmk}

Let $C$ be any dg $R$-coalgebra. In \cite[Theorem 6.2]{hess-shipley} it was shown that  $(\Ch^{+}_R)^C$, the category of non-negatively graded dg $C$-comodules, admits a model category structure left-induced from an injective model category structure on $\Ch^{+}_R$ for which the weak equivalences are quasi-isomorphisms, as long as $R$ is semi-hereditary, and $C$ is non-negatively graded, $R$-free, and of finite type.   Other existence results for model structures for categories of comodules over a field were recently established by Drummond-Cole and Hirsh in \cite {cole-hirsh}, along the lines of earlier work of Lef\`evre-Hasegawa \cite{lefevre}.

Here we establish a more general existence result, for any commutative ring $R$ and any dg $R$-coalgebra 
$C$, as a special case of Theorem \ref{thm:cooperadsQQ'}.

\begin{cor}\label{cor:comodule} Let $R$ be any commutative ring and $C$ any dg $R$-coalgebra.
There exist  model category structures on the category $\Ch_R^C$ of right $C$-comodules that are left-induced  along the forgetful functor $V\colon \Ch_R^C \lra \Ch_{R}$, with respect to both the Hurewicz model structure and the injective model structure.
\end{cor}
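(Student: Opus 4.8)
The plan is to realize $\Ch_R^C$ as a category of coalgebras over a cooperad concentrated in arity~$1$ and then obtain the result from Theorem~\ref{thm:cooperadsQQ'}. I would let $\cQ$ be the symmetric sequence with $\cQ(1)=C$ and $\cQ(n)=0$ for $n\neq1$, made into a cooperad by declaring its only nontrivial cocomposition $\cQ(1)\to\cQ(1)\otimes\cQ(1)$ to be the comultiplication of $C$ and its counit $\cQ\to\Op J$ to be the counit of $C$. The cooperad coassociativity and counitality axioms for this arity-$1$ sequence are literally the coassociativity and counitality of the coalgebra $C$, so this first step is just unwinding definitions.

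The first substantive step is to identify $\cQ$-coalgebras with right $C$-comodules. Since $\cQ(0)=0$ the comonad $\Gamma_\cQ$ is defined, and $\Gamma_\cQ(X)=(\cQ(1)\otimes X)^{\Sigma_1}=C\otimes X\cong X\otimes C$ is exactly the cofree right $C$-comodule on $X$; comparing comonad structures then yields $\cQ\text{-}\Coalg_R\cong\Ch_R^C$, under which the forgetful/cofree adjunction of Theorem~\ref{thm:cooperadsQQ'} becomes the adjunction $V\colon\Ch_R^C\to\Ch_R$ of the corollary, with right adjoint $-\otimes C$. I would then establish local presentability of $\cQ\text{-}\Coalg_R$ exactly as in Corollary~\ref{cor:comonoids}: because $-\otimes C$ preserves (filtered) colimits, \cite[Proposition~A.1]{ching-riehl} applies.

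It remains to produce the auxiliary cooperad of Theorem~\ref{thm:cooperadsQQ'}, and here I would take $\cQ'=\Op J$, the unit for the composition product. Levelwise $\cQ\otimes\Op J=\cQ$, since $\Op J$ is $R$ in arity~$1$ and the $0$-object elsewhere, matching the support of $\cQ$, so the identity furnishes the required cooperad map $\cQ\otimes\cQ'\to\cQ$; and as $\Gamma_{\Op J}=\id$, the category $\cQ'\text{-}\Coalg_R$ is simply $\Ch_R$, so the interval complex of \S\ref{sec:cyl} directly supplies the good cylinder $R\oplus R\rightarrowtail I\xrightarrow{\sim}R$ of~\eqref{eq:unit-dgcylinder} in both $(\Ch_R)_{\mathrm{Hur}}$ and $(\Ch_R)_{\mathrm{inj}}$. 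All hypotheses of Theorem~\ref{thm:cooperadsQQ'} are then in place, giving the two left-induced model structures. I do not anticipate a genuine obstacle: the only point requiring attention is that $C$ is merely a coalgebra, not a bialgebra, so the Hopf-cooperad shortcut of Remark~\ref{rmk:Hopf-coop} is unavailable; using the trivial auxiliary cooperad $\Op J$ instead, the cylinder that Theorem~\ref{thm:cooperad-cylinder} manufactures on a comodule $M$ is the evident object $M\otimes I$ with coaction induced from that of $M$, which is visibly good.
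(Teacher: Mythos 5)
Your proof is correct and follows essentially the same route as the paper: realize $C$-comodules as coalgebras over the cooperad $\cQ_C$ concentrated in arity $1$, obtain local presentability from the fact that the cofree functor $-\otimes C$ preserves filtered colimits, and invoke Theorem \ref{thm:cooperadsQQ'}. The only divergence is the choice of auxiliary cooperad: the paper takes $\cQ'=\Op A^{\mathrm{co}}$ and uses the coassociative comultiplication on the interval $I$, whereas you take $\cQ'=\Op J$, which makes the cylinder hypothesis hold vacuously since $\cQ'\text{-}\Coalg_R\cong\Ch_R$; both choices satisfy the hypotheses and yield the same cylinder $M\otimes I$ on a comodule $M$.
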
  

\begin{proof} Let $\cQ_{C}$ be the cooperad that is $C$ in arity 1 and $0$ in all other arities, so that the category of $\cQ$-coalgebras is isomorphic to the category of $C$-comodules.  It is obvious that there is a morphism of cooperads $\cQ_{C}\otimes \Op A^{\text{co}} \to \cQ_{C}$.  Moreover the cofree coalgebra functor, which is simply $-\otimes C$, preserves all colimits and thus, in particular, filtered colimits, which implies that $\Ch_R^C$ is locally presentable. As mentioned in the proof of the previous corollary, the interval complex $I$ admits an obvious coassociative comultiplication, extending the trivial comultiplication on either endpoint, so we can apply Theorem \ref{thm:cooperadsQQ'} to conclude.
\end{proof}

We denote the model category structures $(\Ch_R^C)_{\mathrm{Hur}}$ and $(\Ch_R^C)_{\mathrm{inj}}$.

\subsection{Bialgebras}
Let $\Op P$ be any dg operad, and let $\Op Q$ be a nice enough cooperad, in the sense of our results above.
In this section we establish the existence of two Quillen-equivalent model category structures on the category $(\Op P, \Op Q)\text{-}\Bialg_{R}$ of dg $(\Op P, \Op Q)$-bialgebras over $R$, for any commutative ring $R$. We show that these structures are distinct if $R$ is not of characteristic 2, and $\cP$ and $\cQ$ are the associative operad and the coassociative cooperad.  To the best of the authors' knowledge, the only other example of model category structure on $(\Op P, \Op Q)\text{-}\Bialg_{R}$ in the literature is due to Yalin \cite{yalin}, who considered the case of positively graded dg-$(\Op P, \Op Q)$-bialgebras  over a field $\Bbbk$ of characteristic zero, where $\Op P$ and $\Op Q$ are reduced operads in finite-dimensional $\Bbbk$-vector spaces.  Starting from the left-induced model category structure on the category of $\Op Q$-coalgebras mentioned in the introduction to section \ref{sec:coalg}, he proved that  the forgetful functor from $(\Op P, \Op Q)$-bialgebras to $\Op Q$-coalgebras right-induced the desired model category structure.  

\subsubsection{Model structures on $(\Op P, \Op Q)\text{-}\Bialg_R$} 
Recall the right-induced model structure $(\Op P\text{-}\Alg_{R})_{\mathrm{Hur}}$  of Proposition \ref{prop:alg-mod-cat}   and the left-induced model structure $(\cQ\text{-}\Coalg_{R})_{\mathrm{Hur}}$ of Theorem \ref{thm:cooperadsQQ'}.

\begin{thm}\label{thm:bialg} Let $R$ be a commutative ring.  Let $\Op P$ be a {$\Sigma$-split} operad in $\Ch_{R}$, and let $\cQ$ and $\cQ'$ be cooperads in $\Ch_R$ satsifying the hypotheses of Theorem \ref{thm:cooperadsQQ'}.   Let $\mathbb P$ and $\mathbb Q$ denote the free $\Op P$-algebra monad and cofree $\Op Q$-coalgebra comonad on $\Ch_{R}$.  If there is a distributive law $\mathbb P\mathbb Q \Rightarrow \mathbb Q\mathbb P$, then the category $(\Op P, \Op Q)\text{-}\Bialg_{R}$ of $(\Op P, \Op Q)$-bialgebras admits two Quillen-equivalent model category structures,  one of which is left-induced from $(\Op P\text{-}\Alg_{R})_{\mathrm{Hur}}$, while the other is right-induced from $(\Op Q\text{-}\Coalg_{R})_{\mathrm{Hur}}$.  In particular, the weak equivalences in both cases are the bialgebra morphisms such that the underlying chain map is a chain homotopy equivalence.  
\end{thm}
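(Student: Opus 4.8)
The strategy is to exhibit this as a direct instance of the adjoint-square machinery, applying Corollary~\ref{cor:monad_comonad} (which rests on Theorem~\ref{thm:square}) with base model category $(\Ch_R)_{\mathrm{Hur}}$. First I would identify the categories in play: since $\mathbb P$ is the free $\Op P$-algebra monad and $\mathbb Q$ the cofree $\Op Q$-coalgebra comonad on $\Ch_R$, we have $\Alg^{\mathbb P}(\Ch_R) = \Op P\text{-}\Alg_R$, $\Coalg_{\mathbb Q}(\Ch_R) = \cQ\text{-}\Coalg_R$, and $\Bialg_{\mathbb Q}^{\mathbb P}(\Ch_R) = (\Op P, \Op Q)\text{-}\Bialg_R$. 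The hypothesized distributive law $\mathbb P\mathbb Q \Rightarrow \mathbb Q\mathbb P$ is precisely what assembles these into the square of adjunctions of Section~\ref{section:adjoint_squares}, in which the forgetful functors commute strictly and the free/cofree functors commute up to isomorphism; this supplies the compatibilities $UV \cong VU$ and $LV \cong VL$ (equivalently $UR \cong RU$) that Theorem~\ref{thm:square} requires.

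Next I would check the standing hypotheses. All categories must be locally presentable: $\Ch_R$ is, $\Op P\text{-}\Alg_R$ is because $\mathbb P$ is accessible, and $\cQ\text{-}\Coalg_R$ is by the hypotheses inherited from Theorem~\ref{thm:cooperadsQQ'}, i.e. accessibility of $\mathbb Q$. The two ``wings'' of the square are then provided by the results already established: Proposition~\ref{prop:alg-mod-cat} gives the right-induced Hurewicz model structure on $\Op P\text{-}\Alg_R$ created by the forgetful functor to $\Ch_R$ (this is where $\Sigma$-splitness of $\Op P$ enters), and Theorem~\ref{thm:cooperadsQQ'} gives the left-induced Hurewicz model structure on $\cQ\text{-}\Coalg_R$ created by the forgetful functor to $\Ch_R$.

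With these inputs, Corollary~\ref{cor:monad_comonad} immediately yields both a right-induced model structure on $(\Op P, \Op Q)\text{-}\Bialg_R$, created by the forgetful functor $U$ to $\cQ\text{-}\Coalg_R$ and hence right-induced from $(\cQ\text{-}\Coalg_R)_{\mathrm{Hur}}$, and a left-induced model structure, created by the forgetful functor $V$ to $\Op P\text{-}\Alg_R$ and hence left-induced from $(\Op P\text{-}\Alg_R)_{\mathrm{Hur}}$. Theorem~\ref{thm:square}, underlying the corollary, further identifies the identity as the left member of a Quillen equivalence from the right-induced to the left-induced structure, establishing the asserted Quillen equivalence. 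For the final claim about weak equivalences, I would note that the compatibility $UV \cong VU$ forces both structures to share the same weak equivalences, namely those bialgebra morphisms sent to weak equivalences by the composite forgetful functor to $(\Ch_R)_{\mathrm{Hur}}$; since Hurewicz weak equivalences are exactly the chain homotopy equivalences, these are the maps whose underlying chain map is a chain homotopy equivalence.

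Since essentially all of the real work has been packaged into Theorem~\ref{thm:square}, Proposition~\ref{prop:alg-mod-cat}, and Theorem~\ref{thm:cooperadsQQ'}, the argument is a verification that their hypotheses are met rather than a fresh construction. I expect the only step needing genuine attention to be the confirmation that $(\Op P, \Op Q)\text{-}\Bialg_R$ is locally presentable, i.e. that the distributive law lifts $\mathbb P$ to an accessible monad on $\cQ\text{-}\Coalg_R$ whose algebras are the bialgebras; everything else is bookkeeping.
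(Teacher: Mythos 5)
Your proposal is correct and follows essentially the same route as the paper: the authors likewise apply Corollary~\ref{cor:monad_comonad} to the square of adjunctions built from the free $\Op P$-algebra and cofree $\Op Q$-coalgebra functors and their lifts (which exist by the distributive law), feeding in Proposition~\ref{prop:alg-mod-cat} and Theorem~\ref{thm:cooperadsQQ'} for the two wings and noting that $(\Op P,\Op Q)\text{-}\Bialg_R$ is locally presentable because $\cQ\text{-}\Coalg_R$ is. The one step you flag as needing attention---local presentability of the bialgebra category via the lifted accessible monad---is exactly the point the paper also singles out, so nothing is missing.
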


In particular this theorem applies to the category of ordinary bialgebras  $\Bialg_{R}$, where $\Op P$ is the associative operad and $\Op Q$ is the coassociative cooperad. We show in the next section (Proposition \ref{prop:distinct}) that the two model structures of the theorem above on $\Bialg_{R}$ are actually distinct when the characteristic of $R$ is different from 2.

\begin{proof} We apply Corollary \ref{cor:monad_comonad} to the square
\[ \xymatrix@R=4pc@C=4pc{ \Ch_{R} \ar@{}[r]|{\perp} \ar@<-1ex>[d]_{F_{\Op P}} \ar@<-1ex>[r]_{\Gamma_{\Op Q}} & \Op Q\text{-}\Coalg_{R}  \ar@<1ex>[d]^{\widehat F_{\Op P}} \ar@<-1ex>[l]_V \\  
\ar@{}[r]|{\top} \ar@{}[u]|{\dashv} \Op P\text{-}\Alg_{R} \ar@<1ex>[r]^{\widehat\Gamma_{\Op Q}} \ar@<-1ex>[u]_U & (\Op P, \Op Q)\text{-}\Bialg_{R} \ar@<1ex>[u]^U \ar@<1ex>[l]^V\ar@{}[u]|{\vdash} }\] 
using the existence results for Hurewicz model category structures on $\Op P\text{-}\Alg_{R}$ and $\Op Q\text{-}\Coalg_{R}$ from sections \ref{sec:alg-r} and \ref{sec:coalg}. Here $F_{\Op P}$ denotes the free  $\Op P$-algebra functor and $\Gamma_{\Op Q}$ the cofree $\Op Q$-coalgebra functor, while $\widehat F_{\Op P}$ and $\widehat \Gamma_{\Op Q}$ denote their lifts.   Note that the lifts exist and the chain complex underlying $\widehat F_{\Op P}(X, \delta)$ is $F_{\Op P}X$, thanks to the distributive law $\mathbb P\mathbb Q \Rightarrow \mathbb Q\mathbb P$.  Note moreover that $(\Op P, \Op Q)\text{-}\Bialg_{R}$ is locally presentable, since $ \Op Q\text{-}\Coalg_{R}$ is locally presentable by hypothesis.

Since all the categories in the diagram are locally presentable, and $UV=VU$ and  $F_{\Op P}V\cong V\widehat F_{\Op P}$, we conclude that the two desired model structures on the category $(\Op P, \Op Q)\text{-} \Bialg_R$ exist and are Quillen equivalent.
\end{proof}

We denote the two model structures on $(\Op P, \Op Q)\text{-}\Bialg_{R}$ given by the theorem above by $\big((\Op P, \Op Q)\text{-}\Bialg_{R}\big)_{\urcorner}$ and $\big((\Op P, \Op Q)\text{-}\Bialg_{R}\big)_{\llcorner}$, i.e., $\big((\Op P, \Op Q)\text{-}\Bialg_{R}\big)_{\urcorner}$ is constructed by left-inducing from $(\Ch_{R})_{\mathrm{Hur}}$ to $(\cQ\text{-}\Coalg_{R})_{\mathrm{Hur}}$ then by right-inducing along the righthand vertical adjunction, while $\big((\Op P, \Op Q)\text{-}\Bialg_{R}\big)_{\llcorner}$ is constructed by right-inducing from $(\Ch_{R})_{\mathrm{Hur}}$ to $(\cP\text{-}\Alg_{R})_{\mathrm{Hur}}$ then by left-inducing along the bottom horizontal adjunction.

\subsubsection{Fibrant and cofibrant replacements in $\Bialg_R$}\label{sec:cofib-fib-rep}

In this section we construct explicit fibrant replacements in $(\Bialg_{R})_{\urcorner}$ and cofibrant replacements in $(\Bialg_{R})_{\llcorner}$, in terms of the cobar and bar constructions, see (\ref{eqn:bar-cobar}).
The desired replacements in $\Bialg_{R}$ arise from the following replacements in $(\Alg_{R})_{\mathrm {Hur}}$ and $(\Coalg_{R})_{\mathrm {Hur}}$, which are of independent interest.

Observe first that it follows from the definition of $(\Ch_{R})_{\mathrm{Hur}}$ that fibrations in $(\Alg_{R})_{\mathrm{Hur}}$ are degreewise surjective algebra morphisms that are split as morphisms of graded $R$-modules, while cofibrations in $(\Coalg_{R})_{\mathrm{Hur}}$ are degreewise injective coalgebra morphisms that are split as morphisms of graded $R$-modules.
Recall also that a dg $R$-algebra is \emph{augmented} if equipped with a morphism $A \lra R$ of algebras, while a dg $R$-coalgebra is \emph{coaugmented} if equipped with a morphism $R \lra C$ of coalgebras.

\begin{prop}\label{prop:fib-cofib}  Let $R$ be a commutative ring.
\begin{enumerate}
\item For any augmented dg $R$-algebra $A$, the counit  of the cobar-bar adjunction
$$\varepsilon_{A}\colon \Omega \Bar A \lra A$$
is a cofibrant replacement in $(\Alg_{R})_{\mathrm{Hur}}$.  
\item For any coaugmented dg $R$-coalgebra $C$, the unit of the cobar-bar adjunction
$$\eta_{C}\colon C \lra \Bar \Omega C$$
is a fibrant replacement in $(\Coalg_{R})_{\mathrm{Hur}}$.
\end{enumerate}
\end{prop}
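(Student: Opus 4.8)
The plan is to split each statement into two independent claims: that the adjunction (co)unit is a weak equivalence, and that its (co)domain is (co)fibrant. Since $(-)^{\op}$ interchanges $\Om$ with $\Bar$, $\Alg_R$ with $\Coalg_R$, and the right-induced structure $(\Alg_{R})_{\mathrm{Hur}}$ of Proposition~\ref{prop:alg-mod-cat} with the left-induced structure $(\Coalg_{R})_{\mathrm{Hur}}$ of Corollary~\ref{cor:comonoids}, statements (1) and (2) are formally dual, so it suffices to treat (1) in detail. For the weak-equivalence claim I would invoke the classical fact that the cobar--bar resolution $\varepsilon_{A}\colon \Om\Bar A \lra A$ is a chain homotopy equivalence of underlying complexes, the contracting homotopy being an explicit natural formula defined already over $\mathbb Z$ (see \cite{husemoller-moore-stasheff, neisendorfer}); it is therefore a weak equivalence in $(\Alg_{R})_{\mathrm{Hur}}$ for \emph{every} commutative ring $R$, with no boundedness hypothesis. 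Moreover, since $\Bar A = (T^{\mathrm{co}}(s\overline A), d_{\Bar})$ and $\varepsilon_{A}$ carries the length-one generators $\si s\overline A$ identically onto $\overline A$ while sending all longer words to $0$, the map $\varepsilon_{A}$ is a surjection of algebras split, as a morphism of graded $R$-modules, by $a \mapsto \si(sa)$. Hence $\varepsilon_{A}\in U^{-1}\Fib_{\mathrm{Hur}}$, and being also a weak equivalence it is a \emph{trivial} fibration. It thus remains only to show that $\Om\Bar A$ is cofibrant; dually, for (2), that $\Bar\Om C$ is fibrant and $\eta_{C}$ a trivial cofibration.

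Both cofibrancy and fibrancy follow from two general assertions, which I would isolate as lemmas: for any coaugmented (hence, in $\Coalg_R$, conilpotent) dg $R$-coalgebra $C$ the cobar construction $\Om C$ is cofibrant in $(\Alg_{R})_{\mathrm{Hur}}$, and dually for any augmented dg $R$-algebra $A$ the bar construction $\Bar A$ is fibrant in $(\Coalg_{R})_{\mathrm{Hur}}$. Indeed $\Bar A$ is conilpotent coaugmented, so the first gives cofibrancy of $\Om\Bar A$, while $\Om C$ is augmented, so the second gives fibrancy of $\Bar\Om C$. To prove the first, I would use that $\Om C = (T(\si\overline C), d_{\Om})$ is \emph{free as a graded $R$-algebra} on $V = \si\overline C$, so that an algebra map out of $\Om C$ is freely determined by a degree-zero graded map on $V$ subject to the single chain-map equation on generators. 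The coradical filtration $R = F_{0}C \subset F_{1}C \subset \cdots$ is exhaustive by conilpotency, the reduced comultiplication $\bar\Delta$ strictly lowers coradical degree, and the internal differential preserves it; consequently the quadratic part of $d_{\Om}$ sends $\si\overline{F_{n}C}$ into products of generators of strictly lower filtration. This exhibits $R \to \Om C$ as the transfinite composite of the free extensions $\Om(F_{n}C)\hookrightarrow \Om(F_{n+1}C)$.

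To verify the left lifting property against an arbitrary trivial fibration $p\colon X \twoheadrightarrow Y$ of $(\Alg_{R})_{\mathrm{Hur}}$, note that such a $p$ is a surjective algebra map that is graded-split and a chain homotopy equivalence, so its kernel $N$ is a graded-split \emph{contractible} subcomplex; fix a contracting homotopy $h$ on $N$ and a graded section $\sigma$ of $p$. Given $g\colon \Om C \to Y$, I would build a lift $f\colon \Om C \to X$ by defining its restriction $\phi$ to $V$ inductively over the coradical filtration. On the new generators at stage $n$, the value forced by the chain-map equation is determined by the internal differential together with the already-lifted quadratic terms; setting $\psi = \sigma g$ produces a graded lift whose failure to be a chain map, using $p\sigma = \id$ and the chain-map property of $g$, lands in the kernel $N$, and correcting by $h$ yields a genuine chain-map lift still satisfying $p\phi = g$. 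Freeness of $\Om C$ extends $\phi$ multiplicatively, giving the desired $f$. The assertion for $\Bar A$ is the exact dual: $\Bar A = (T^{\mathrm{co}}(s\overline A), d_{\Bar})$ is \emph{cofree} as a graded coalgebra on $s\overline A$, the tensor-length filtration coincides with the coradical filtration so that $d_{\Bar}$ is correspondingly triangular, and one lifts against trivial cofibrations --- graded-split monomorphisms of coalgebras that are chain homotopy equivalences, hence with graded-split contractible cokernel --- by the dual correction.

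The main obstacle is the bookkeeping in this inductive lifting: one must confirm that the non-linear term of the twisted differential $d_{\Om}$ (resp.\ $d_{\Bar}$) only ever involves generators treated at earlier stages, so that the error term genuinely takes values in the contractible kernel $N$ where $h$ can annihilate it, and that this error is itself a cycle relative to the lower data so that the correction closes up. This is precisely what conilpotency guarantees, through the strict lowering of coradical degree by $\bar\Delta$. A secondary point needing care is that the classical contracting homotopy witnessing the weak equivalence of $\varepsilon_{A}$ (and $\eta_{C}$) is natural and defined integrally, so that this step is valid over an arbitrary commutative ring and for possibly unbounded complexes. Granting these, $\Om\Bar A$ is cofibrant and $\varepsilon_{A}$ is a trivial fibration, so $\varepsilon_{A}$ is a cofibrant replacement; dually $\eta_{C}$ is a trivial cofibration into the fibrant object $\Bar\Om C$, hence a fibrant replacement.
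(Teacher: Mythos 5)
Your proposal is correct in outline and its skeleton matches the paper's: both arguments split the statement into the classical fact that the (co)unit of the cobar--bar adjunction is a chain homotopy equivalence (quoted from Husemoller--Moore--Stasheff) and the assertion that $\Om \Bar A$ is cofibrant, respectively $\Bar \Om C$ is fibrant; and both prove the latter by an induction exploiting the (co)freeness of the underlying graded object of the (co)bar construction together with the triangularity of the twisted differential. Where you differ is in how that induction is packaged. The paper isolates the auxiliary notions of \emph{split-nilpotent} algebra and \emph{split-conilpotent} coalgebra, notes that $\Bar A$ is split-conilpotent (its tensor-length filtration is $R$-split) and that $\Om C$ is split-nilpotent under a positivity hypothesis, and then exhibits $\Om C$ as a colimit of pushouts of maps of the form $T(j)$ with $j$ a Hurewicz cofibration of chain complexes (dually $\Bar A$ as a limit of pullbacks of cofree maps), so that (co)fibrancy follows from closure properties of cell objects. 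You instead verify the lifting property directly: a naive graded lift obtained from the graded splitting of a trivial fibration is corrected, stage by stage along the filtration, by a contracting homotopy on the contractible kernel. These encode the same induction; the paper's version is more structural and reusable (it identifies $\Om C$ as a cell complex), yours is more self-contained at the price of the bookkeeping you acknowledge. Your extra observation that $\varepsilon_A$ is a graded-split surjection, hence a trivial fibration, is correct but not needed for ``cofibrant replacement.''

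One point needs more care than you give it. You assert the key lemma for an \emph{arbitrary} coaugmented conilpotent $C$, inducting over the coradical filtration. Over a general commutative ring the coradical filtration need not be degreewise $R$-split, and your argument genuinely uses such splitness: you must single out the ``new generators at stage $n$'' as a graded direct summand in order to define the lift there, and the error term must land in a split contractible kernel for the homotopy correction to apply. This is exactly why the paper works with \emph{split}-conilpotency rather than plain conilpotency, so the lemma in the generality you state is not established by your argument. The proposition itself is unaffected, since you only apply the lemma to $C=\Bar A$ and $A=\Om C$, whose underlying graded (co)algebras are (co)free and whose tensor-length filtrations are manifestly split; you should restrict the lemma accordingly (or adopt the paper's split-(co)nilpotent hypotheses).
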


Existence of the desired replacements for bialgebras now follows easily.

\begin{cor}\label{cor:(co)fib-rep}  Let $R$ be a commutative ring, and let $H$ be a conilpotent dg $R$-bialgebra.
\begin{enumerate}
\item The counit  of the cobar-bar adjunction
$$\varepsilon_{H}\colon \Omega \Bar H \lra H$$
is a cofibrant replacement in $(\Bialg_{R})_{\llcorner}$.  
\item The unit of the cobar-bar adjunction
$$\eta_{H}\colon H \lra \Bar \Omega H$$
is a fibrant replacement in $(\Bialg_{R})_{\urcorner}$.
\end{enumerate}
\end{cor}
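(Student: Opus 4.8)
The plan is to reduce each of the two statements to the corresponding part of Proposition \ref{prop:fib-cofib} by transporting everything along the two forgetful functors $V\colon \Bialg_R \to \Alg_R$ and $U\colon \Bialg_R \to \Coalg_R$ from the square of Theorem \ref{thm:bialg}. The relevant feature of the construction (via Theorem \ref{thm:square}) is that $(\Bialg_R)_{\llcorner}$ is left-induced along the left adjoint $V$, so $V$ creates its cofibrations and weak equivalences, whereas $(\Bialg_R)_{\urcorner}$ is right-induced along the right adjoint $U$, so $U$ creates its fibrations and weak equivalences; in both cases the weak equivalences are the underlying chain homotopy equivalences. First I would record two purely formal consequences: since $V$ is a left adjoint it preserves the initial object, so an object is cofibrant in $(\Bialg_R)_{\llcorner}$ exactly when its underlying algebra is cofibrant in $(\Alg_R)_{\mathrm{Hur}}$; dually, since $U$ is a right adjoint it preserves the terminal object, so an object is fibrant in $(\Bialg_R)_{\urcorner}$ exactly when its underlying coalgebra is fibrant in $(\Coalg_R)_{\mathrm{Hur}}$.

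The crucial input, which I would isolate as a lemma, is that the cobar-bar constructions on bialgebras are compatible with these forgetful functors. Writing $\widehat\Bar, \widehat\Omega \colon \Bialg_R \to \Bialg_R$ for the lifts to bialgebras of the bar and cobar functors (so that the maps in the statement are $\varepsilon_H \colon \widehat\Omega\widehat\Bar H \to H$ and $\eta_H \colon H \to \widehat\Bar\widehat\Omega H$), I want natural isomorphisms $U\widehat\Bar \cong \Bar V$ and $V\widehat\Omega \cong \Omega U$ under which the lifted counit and unit restrict to the algebra- and coalgebra-level ones, i.e.\ $V\varepsilon_H = \varepsilon_{VH}$ and $U\eta_H = \eta_{UH}$. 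This is the classical observation that for a bialgebra $H$ the underlying coalgebra of $\widehat\Bar H$ is exactly $\Bar$ of the underlying algebra $VH$ --- the cofree coalgebra $T^{\mathrm{co}}(s\overline H)$ with the usual bar differential built from the multiplication of $H$ --- the extra multiplicative structure making $\widehat\Bar H$ a bialgebra being induced from the comultiplication of $H$; and dually for $\widehat\Omega$. As the (co)unit maps are given by the same formulas on underlying complexes as in the (co)algebra setting, they are intertwined by $U$ and $V$.

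Granting this lemma, statement (1) is immediate. Applying $V$ gives $V\widehat\Omega\widehat\Bar H \cong \Omega U\widehat\Bar H \cong \Omega\Bar(VH)$ together with $V\varepsilon_H = \varepsilon_{VH}$, and Proposition \ref{prop:fib-cofib}(1) asserts that $\varepsilon_{VH}\colon \Omega\Bar(VH) \to VH$ is a cofibrant replacement in $(\Alg_R)_{\mathrm{Hur}}$, so its source is cofibrant and it is a weak equivalence. By the first paragraph the cofibrancy of the source lifts to $(\Bialg_R)_{\llcorner}$, and since $V$ creates weak equivalences, $\varepsilon_H$ is a weak equivalence there; hence $\varepsilon_H$ is a cofibrant replacement in $(\Bialg_R)_{\llcorner}$. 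Statement (2) is formally dual: I would apply $U$, use $U\widehat\Bar\widehat\Omega H \cong \Bar\Omega(UH)$ and $U\eta_H = \eta_{UH}$, and invoke Proposition \ref{prop:fib-cofib}(2) to exhibit $\eta_H$ as a fibrant replacement in $(\Bialg_R)_{\urcorner}$.

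I expect the only genuine work to lie in the isolated lemma --- verifying that bar and cobar lift to bialgebras and commute with $U$ and $V$ as stated, including the compatibility of the (co)units and the bookkeeping of the conilpotency and coaugmentation hypotheses implicit in the adjunction \eqref{eqn:bar-cobar}. Once that is settled, everything else is a formal consequence of left/right induction and Proposition \ref{prop:fib-cofib}.
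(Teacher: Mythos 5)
Your proposal is correct and follows essentially the same route as the paper: the paper likewise reduces both statements to Proposition \ref{prop:fib-cofib} by observing that cofibrant objects and weak equivalences in $(\Bialg_R)_{\llcorner}$ are created in $(\Alg_R)_{\mathrm{Hur}}$ while fibrant objects and weak equivalences in $(\Bialg_R)_{\urcorner}$ are created in $(\Coalg_R)_{\mathrm{Hur}}$. The compatibility lemma you isolate --- that $\Omega \Bar H$ and $\Bar \Omega H$ carry natural bialgebra structures for which $\varepsilon_H$ and $\eta_H$ are bialgebra morphisms --- is precisely the step the paper outsources to \cite[Theorem 3.12]{hess:twisting}; it is genuinely nontrivial for noncommutative $H$ (the multiplication on $\Bar H$ is not just a formal transport of structure), so you would need that citation or a real construction there, but the overall architecture matches the paper's proof.
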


\begin{proof} [Proof of Corollary \ref{cor:(co)fib-rep}] Note that any bialgebra is naturally augmented (by its counit) as an algebra and coaugmented (by its unit) as a coalgebra. By \cite[Theorem 3.12]{hess:twisting}, the algebra $\Omega \Bar H$ admits a natural bialgebra stucture with respect to which $\varepsilon_{H}$ is a morphism of bialgebras. Dually  the coalgebra $\Bar\Omega  H$ admits a natural bialgebra stucture with respect to which $\eta_{H}$ is a morphism of bialgebras.  To conclude it suffices therefore to recall that cofibrant objects and weak equivalences in $(\Bialg_{R})_{\llcorner}$ are created in $(\Alg_{R})_{\mathrm{Hur}}$, while fibrant objects and weak equivalences in $(\Bialg_{R})_{\urcorner}$ are created in $(\Coalg_{R})_{\mathrm{Hur}}$, and then to apply Proposition \ref{prop:fib-cofib}.
\end{proof}

To prove Proposition \ref{prop:fib-cofib}, we need to know that the chain maps underlying the unit and counit  of the cobar-bar adjunction are chain homotopy equivalences.

\begin{lem}\label{lem:alg-htpy-eq}\cite[Theorem 4.4]{husemoller-moore-stasheff} The chain map underlying the counit map $\varepsilon_{A}\colon \Omega \Bar A \lra A$ is a chain homotopy equivalence for every augmented dg $R$-algebra $A$.
\end{lem}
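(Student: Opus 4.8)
The plan is to reprove the lemma by exhibiting an explicit strong deformation retraction, which is the content of \cite[Theorem 4.4]{husemoller-moore-stasheff}; over an arbitrary commutative ring $R$ one cannot argue via homology, so an explicit chain homotopy is essential. Since $A$ is augmented, write $A = R \oplus \overline A$, and recall that as a graded $R$-module $\Omega\Bar A = T\big(\si \overline{\Bar A}\big)$, where $\overline{\Bar A} = \bigoplus_{k \geq 1}(s\overline A)^{\otimes k}$. By definition $\varepsilon_{A}$ is the algebra map sending the generator $\si(sa)$ to $a$ for $a \in \overline A$ and annihilating every generator $\si(sa_{1}|\cdots|sa_{k})$ with $k \geq 2$, as well as every product of two or more generators.

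First I would produce a section. Define $\iota\colon A \to \Omega\Bar A$ by $\iota(1)=1$ and $\iota(a)=\si(sa)$ for $a \in \overline A$. Because the reduced comultiplication of a word-length-one element $sa$ in the cofree coalgebra $\Bar A$ vanishes, the deconcatenation part of the cobar differential contributes nothing to $d_{\Omega}\big(\si(sa)\big)$, and a short computation gives $d_{\Omega}\big(\si(sa)\big) = \si\big(s(d_{A}a)\big) = \iota(d_{A}a)$, so $\iota$ is a chain map; moreover $\varepsilon_{A}\circ\iota = \id_{A}$ on the nose. It therefore remains to construct a contracting homotopy $H$ of degree $+1$ on $\Omega\Bar A$ with $d_{\Omega}H + Hd_{\Omega} = \id - \iota\,\varepsilon_{A}$, which exhibits $\iota$ and $\varepsilon_{A}$ as mutually inverse chain homotopy equivalences.

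The homotopy $H$ is the extra-degeneracy contraction of the (acyclic) cobar-bar construction: I would define it on generators by ``splitting off the first tensor letter'' and extend by a Leibniz-type rule, then verify the homotopy identity by induction on a filtration of $\Omega\Bar A$ by total bar-weight (the number of factors of $\overline A$ appearing). The main obstacle, and the reason the verification is delicate, is that $d_{\Omega}$ decomposes into three interacting pieces --- the internal differential induced by $d_{A}$, the bar multiplication $sa|sb \mapsto s(ab)$, and the cobar deconcatenation turning coproducts into products of free-algebra generators --- which behave differently with respect to any single filtration; reconciling them in the identity $d_{\Omega}H + Hd_{\Omega} = \id - \iota\varepsilon_{A}$ requires careful sign bookkeeping and is precisely the combinatorial heart of \cite[Theorem 4.4]{husemoller-moore-stasheff}. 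I would carry out this induction in detail, or, since the result is classical, simply invoke that reference.
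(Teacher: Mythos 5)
The paper offers no proof of this lemma at all --- it is stated purely as a citation of \cite[Theorem 4.4]{husemoller-moore-stasheff} --- so your proposal, which sketches the standard explicit strong-deformation-retract argument for that classical result and then defers the combinatorial verification of $d_{\Omega}H+Hd_{\Omega}=\id-\iota\varepsilon_{A}$ to the same reference, is in substance the same approach, and the pieces you do carry out (the section $\iota$, the check that it is a chain map, and $\varepsilon_{A}\iota=\id_{A}$) are correct. One correction before you attempt the deferred verification: $\varepsilon_{A}$ is a morphism of algebras out of the free algebra $T\big(s^{-1}\overline{\operatorname{Bar}A}\big)$, hence is determined by its values on generators and sends a product of generators to the product of their images (e.g.\ $s^{-1}(sa)\cdot s^{-1}(sb)\mapsto \pm\, ab$), so it does \emph{not} annihilate products of two or more generators; this slip is harmless for $\varepsilon_{A}\iota=\id_{A}$, which only involves length-one words, but the right-hand side $\id-\iota\varepsilon_{A}$ of the homotopy identity must be computed with the correct $\varepsilon_{A}$.
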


\begin{lem}\label{lem:coalg-htpy-eq}\cite[Theorem 4.5]{husemoller-moore-stasheff} The chain map underlying the unit map $\eta_{C}\colon C\lra \Bar \Omega C$ is a chain homotopy equivalence for every coaugmented, conilpotent dg $R$-coalgebra $C$.
\end{lem}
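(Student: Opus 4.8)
The plan is to exhibit explicit contracting homotopies rather than to argue on homology. Since $R$ is an arbitrary commutative ring, a quasi-isomorphism of chain complexes need not be a chain homotopy equivalence, so the homological ``comparison lemma'' for twisting morphisms (which would show $\eta_C$ is merely a quasi-isomorphism over a field) is not enough; nothing less than explicit homotopies, defined over $R$, will do. The engine I would use is homological perturbation theory applied to the chain complex underlying $\Bar\Om C$.

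First I would identify $\eta_C$ with the coalgebra map corresponding, under the cobar--bar adjunction \eqref{eqn:bar-cobar}, to the universal twisting morphism $\tau_C\colon C\to \Om C$, $c\mapsto \si\overline c$. Concretely $\eta_C$ sends $c$ to $\sum_{n\ge 1}(s\tau_C)^{\otimes n}\overline\Delta{}^{\,n-1}(c)$, the sum over the iterated reduced comultiplications of $c$, so that $\eta_C(c)$ is a sum of bar words of every length, each entry a cobar word of tensor-length one. As a graded $R$-module, $\Bar\Om C = T^{\mathrm{co}}\big(s\,\overline{\Om C}\big)$ is cofree on the free algebra $\Om C = T(\si\overline C)$, and I would filter it by the total number of tensor factors drawn from $C$. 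Because $C$ is conilpotent, its reduced comultiplication $\overline\Delta$ is locally nilpotent, so on any fixed element only finitely many iterated coproducts survive; this elementwise termination, which also uses the coaugmentation splitting $C\cong R\oplus\overline C$, is exactly the finiteness the perturbation lemma will demand.

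Next I would split the total differential as $D = D_0 + t$, where $D_0$ preserves this weight and $t$ strictly raises it. The weight-preserving part $D_0$ is the internal differential of $C$ together with the concatenation part of the bar differential, so that $\big(\Bar\Om C, D_0\big)$ is the bar construction of the \emph{free} associative algebra $T(\si\overline C)$, with the internal differential of $C$ carried along. The bar construction of a free algebra is classically contractible onto its generators: it carries a strong deformation retraction onto $\big(R\oplus\overline C, d\big)\cong (C,d)$ whose inclusion is the linear map $i\colon c\mapsto s[\si\overline c]$, whose retraction $p$ discards all words except those of bar-length one on a tensor letter of length one, and whose contracting homotopy $h$ shortens words; one arranges the side conditions $h^2=0$, $hi=0$, $ph=0$. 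The perturbation $t$ is the comultiplication part of the cobar differential, which splits one tensor letter $\si\overline c$ into $\si\overline{c_{(1)}}|\si\overline{c_{(2)}}$ and hence raises the weight by one.

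Finally I would invoke the basic perturbation lemma. The composite $ht$ is locally nilpotent, since $t$ is assembled from the reduced comultiplication and the latter is locally nilpotent by conilpotency; hence $\Sigma = \sum_{k\ge 0}(th)^k$ acts by elementwise finite sums and is defined over any $R$. The lemma then upgrades the $D_0$-deformation retraction to one for the full differential $D$, with transported inclusion $i' = i + h\Sigma t\, i$; tracing the first terms shows that $h$ converts the length-two tensor words produced by $t$ into bar words of length two, so that $i'$ reproduces the higher terms of $\eta_C$ and one identifies $i' = \eta_C$. In particular $\eta_C$ acquires a chain homotopy inverse $p'$ and is a chain homotopy equivalence. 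I expect the main obstacle to be the bookkeeping: checking that the classical contraction of the bar construction of a free algebra can be normalized to satisfy the side conditions, and that $ht$ is genuinely locally nilpotent --- precisely the point where conilpotency and the coaugmentation are indispensable and cannot be dropped.
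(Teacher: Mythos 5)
Your argument is correct in outline, but note that the paper does not prove this lemma at all: it is imported verbatim from Husemoller--Moore--Stasheff (their Theorem 4.5), and only the statement is used, namely to exhibit $\eta_C\colon C\to \Bar\Omega C$ as an explicit fibrant replacement in $(\Coalg_R)_{\mathrm{Hur}}$. So any proof you supply is ipso facto a different route; yours is a legitimate and essentially standard perturbation-theoretic one, and the two points you flag as bookkeeping really are the only delicate ones. The decomposition $D=D_0+t$ by the number of $C$-letters is correct: $D_0$ is the differential of $\Bar\bigl(T(s^{-1}\overline{C}),d_1\bigr)$, the bar construction of a free algebra, which carries the classical strong deformation retraction onto $R\oplus\overline{C}\cong C$ (with $h$ splitting the first cobar letter off the first bar letter; the side conditions can always be normalized). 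Be aware that local nilpotency of $th$ is \emph{not} automatic from the bigrading, since $t$ raises the weight and $h$ preserves it, so neither weight nor bar-length is bounded along the orbit; it is exactly here that conilpotency enters, via the pigeonhole argument that an element with finitely many $C$-letters, each annihilated by some iterate of $\overline{\Delta}$, is killed by $t^N$ for $N$ large. The identification $i'=\eta_C$ does work out: $h$ annihilates every summand of $(th)^k t\,i(c)$ except the one in which the leftmost letter was split, and coassociativity reassembles the surviving terms into $(s\tau_C)^{\otimes n}\overline{\Delta}^{\,(n-1)}(c)$; one should also record that $pAi=0$ for weight reasons, so the transported differential on $C$ is $d$ itself and $i'$ is genuinely a chain map over $(C,d)$. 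Compared with the citation, your proof is longer but self-contained, valid over an arbitrary commutative ring, makes the roles of conilpotency and of the coaugmentation splitting transparent, and produces the homotopy inverse and the homotopies explicitly --- which is very much in the spirit of how the paper uses the lemma.
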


The final elements we need to prove Proposition \ref{prop:fib-cofib} concern conditions under which $\Bar A$ is fibrant and $\Om C$ is cofibrant, formulated in the following terms.

\begin{defn}  Let $R$ be a commutative ring.
\begin {enumerate}
\item A dg $R$-algebra $A$ is \emph{split-nilpotent} if there is a sequence 
$$\cdots \xrightarrow{p_{n+1}} A[n]\xrightarrow {p_{n}} A[n-1]\xrightarrow {p_{n-1}} \cdots \xrightarrow {p_{1}}A[0]\xrightarrow {p_{0}} A[-1]=R$$
of fibrations in $(\Alg_{R})_{\mathrm{Hur}}$ such that each $p_{n}$ is a morphism of augmented algebras, $A=\lim_{n}A[n]$, and $\ker p_{n}$ is a trivial (non-unital) algebra {(i.e., a square-zero ideal)} for all $n$.
\item A dg $R$-coalgebra $C$ is \emph{split-conilpotent} if there is a sequence 
$$R=C[-1]\xrightarrow {j_{0}}C[0]\xrightarrow {j_{1}}\cdots \xrightarrow{j_{n-1}} C[n-1]\xrightarrow {j_{n}} C[n]\xrightarrow {j_{n+1}} \cdots $$
of cofibrations in $(\Coalg_{R})_{\mathrm{Hur}}$ such that each $j_{n}$ is a morphism of coaugmented coalgebras, $C=\operatorname{colim}_{n}C_{n}$, and $\coker j_{n}$ is a trivial (non-counital) coalgebra for all $n$.
\end{enumerate}
\end{defn}

\begin{rmk}\label{rmk:split} Over a field any nilpotent algebra is split-nilpotent, and similarly for coalgebras.  {Over an arbitrary commutative ring $R$, any dg $R$-coalgebra with cofree underlying graded coalgebra is split-conilpotent. Moreover if the underlying graded algebra of dg $R$-algebra $A$ is free on a graded module concentrated in strictly positive degrees, then $A$  is split-nilpotent.  In particular, $\Bar A$ is split-conilpotent for every augmented dg $R$-algebra $A$, and $\Omega C$ is split-nilpotent for every coaugmented dg $R$-coalgebra $C$ such that $C_{1}=0$.}  

To see this, recall that $T$ denotes the endofunctor of graded $R$-modules underlying both the free algebra functor and the cofree coalgebra functor. Write  
$$T^{\leq n}X=\bigoplus _{0\leq k \leq n} X^{\otimes k}\quad\text{and}\quad T^{\geq n}X=\bigoplus _{ k \geq n} X^{\otimes k}.$$

Let $(TX, d)$ be a dg $R$-algebra with free underlying graded algebra. {If $X$ is concentrated in strictly positive degrees, the tower of fibrations in $(\Alg_{R})_{\mathrm{Hur}}$ converging to $(TX, d)$ is the sequence of $R$-split quotient maps
$$ \cdots \twoheadrightarrow TX /T^{\geq 3}X \twoheadrightarrow TX /T^{\geq 2}X \twoheadrightarrow R$$
with the obvious induced differential and multiplication in each layer.  It converges to $TX$,  since  in each degree the tower consists only of isomorphisms after a finite stage, due to the hypothesis on $X$.} On the other hand, for a dg $R$-coalgebra $(T^{\mathrm{co}}X, d)$ with cofree underlying coalgebra, the associated sequence of cofibrations in $(\Coalg_{R})_{\mathrm{Hur}}$ is the sequence of $R$-split inclusions
and
$$R \hookrightarrow T^{\leq 1}X \hookrightarrow T^{\leq 2}X \hookrightarrow\cdots$$
with the obvious restricted differential and comultiplication in each layer.
\end{rmk}

\begin{lem}\label{lem:split-fib} Let $R$ be a commutative ring.
\begin{enumerate}
\item If $A$ is a split-nilpotent dg $R$-algebra, then $\Bar A$ is a fibrant object of $(\Coalg_{R})_{\mathrm{Hur}}$.
\item If $C$ is a split-conilpotent dg $R$-coalgebra, the $\Omega C$ is a cofibrant object of $(\Alg_{R})_{\mathrm{Hur}}$.
\end{enumerate}
\end{lem}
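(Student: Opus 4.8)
The plan is to prove (2) directly and to deduce (1) by the formal duality that exchanges algebras with coalgebras, monomorphisms with epimorphisms, and (thanks to the self-duality of the Hurewicz structure, whose cofibrations and fibrations are the degreewise split monomorphisms and epimorphisms) cofibrations with fibrations; under this duality $\Coalg_R(\Ch_R)^\op \cong \Alg_R(\Ch_R^\op)$ and the cobar functor corresponds to the bar functor. So I would spell out (2) and indicate the dualization at the end.

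\emph{Reduction.} Since $R$ is the initial object of $\Alg_R$ and $\Omega R = R$ (the coaugmentation coideal of $R$ vanishes), and since $\Omega$, being the left adjoint of the cobar--bar adjunction \eqref{eqn:bar-cobar}, preserves colimits, applying $\Omega$ to the defining sequence $C = \colim_n C[n]$ of a split-conilpotent coalgebra yields
$$ R = \Omega C[-1] \xrightarrow{\Omega j_0} \Omega C[0] \xrightarrow{\Omega j_1} \cdots, \qquad \Omega C \cong \colim_n \Omega C[n], $$
so that the canonical map $R \to \Omega C$ is the countable composite of the maps $\Omega j_n$. As the cofibrations of $(\Alg_R)_{\mathrm{Hur}}$ form the left class of a weak factorization system, they are closed under transfinite composition; hence it suffices to show that each $\Omega j_n$ is a cofibration, whence $R \to \Omega C$ is one and $\Omega C$ is cofibrant.

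\emph{Analysis of $\Omega j_n$.} Here I would unpack the cobar differential. Writing $W_n = \coker j_n$, the $R$-splitting gives $\overline{C[n]} \cong \overline{C[n-1]} \oplus W_n$ as graded modules, so that as graded algebras $\Omega C[n] = T(s^{-1}\overline{C[n-1]} \oplus s^{-1}W_n)$ is obtained from $\Omega C[n-1] = T(s^{-1}\overline{C[n-1]})$ by freely adjoining the generators $s^{-1}W_n$; that is, $\Omega j_n$ is a relatively free (semifree) extension. For a new generator, $d_\Omega(s^{-1}w) = -s^{-1}(dw) + \sum \pm\, s^{-1}w_i \mid s^{-1}w^i$, where $\overline\Delta(w) = \sum w_i \otimes w^i$. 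The crucial point is that the hypothesis that $W_n$ is a \emph{trivial} coalgebra forces the induced reduced comultiplication on $W_n$ to vanish, i.e.\ $\overline\Delta(w) \in \overline{C[n-1]}\otimes \overline{C[n]} + \overline{C[n]}\otimes\overline{C[n-1]}$, so that every monomial of the comultiplication summand of $d_\Omega(s^{-1}w)$ has at least one factor lying in the old subalgebra $\Omega C[n-1]$. This ``triangularity'' is what renders the extension cellular.

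\emph{Main step (the obstacle).} I would then argue that such a semifree extension is a cofibration of $(\Alg_R)_{\mathrm{Hur}}$, by exhibiting $\Omega j_n$ as a transfinite composite of pushouts of maps $F_{\Op A}(i)$ with $i$ a degreewise split monomorphism of chain complexes (of mapping-cylinder type, as in Example~\ref{ex:chain}, so as to prescribe the differentials of the adjoined generators): these are cofibrations because $F_{\Op A}\dashv U$ is a Quillen adjunction for the right-induced structure, so $F_{\Op A}$ is left Quillen and preserves cofibrations. To present $\Omega j_n$ in this form one must filter the generating module $s^{-1}W_n$ so that the differential becomes lower triangular along the filtration; the internal summand $-s^{-1}(dw)$ is controlled by the fact that $\overline{C[n-1]}\hookrightarrow\overline{C[n]}$ is a subcomplex, and the comultiplication summand by the triviality of $W_n$ just established. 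I expect this triangular cellular decomposition --- equivalently, the direct verification of the left lifting property against acyclic fibrations, using that an acyclic fibration in $(\Ch_R)_{\mathrm{Hur}}$ is a degreewise split epimorphism with contractible kernel carrying a chain contraction --- to be the technical heart of the argument, and the only place where the \emph{enriched} (rather than ordinary) cofibrant generation of the Hurewicz structure is used. Granting it yields (2); for (1) the whole argument dualizes, with $R$ terminal in $\Coalg_R$, $\Bar R = R$, $\Bar$ (a right adjoint) preserving the limit of the tower $\cdots \to A[1]\to A[0]\to R$, and fibrations closed under limits of towers, each $\Bar p_n$ being a relatively cofree coextension adjoining $s\ker p_n$ whose square-zero hypothesis dually confines the multiplicative part of the bar differential to the old subcoalgebra, so that $\Bar p_n$, and hence $\Bar A \to R$, is a fibration.
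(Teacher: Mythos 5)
Your proposal follows essentially the same route as the paper's proof: the paper likewise proves (2) and leaves (1) to duality, filters $\Omega C$ by the subalgebras $\Omega C[n]$, and exhibits each inclusion $\Omega C[n-1]\hookrightarrow \Omega C[n]$ as a pushout in $\Alg_R$ of $T$ applied to a degreewise split monomorphism of mapping-cylinder type, $(s^{-2}\overline{\coker j_n},s^{-2}ds^2)\hookrightarrow (s^{-2}\overline{\coker j_n}\oplus s^{-1}\overline{\coker j_n},D)$, with attaching map $s^{-2}\bar c\mapsto s^{-1}c_i|s^{-1}c^i$ landing in $\Omega C[n-1]$ precisely because the comultiplication on $\coker j_n$ is trivial. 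The ``main step'' you flag as the technical obstacle is thus carried out in the paper by a single explicit pushout per filtration stage: the paper asserts $\overline\Delta(\bar c)\in C[n-1]\otimes C[n-1]$ outright (rather than your weaker containment with one factor old), so no further triangular filtration of $\coker j_n$ is invoked there.
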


\begin{proof}  We prove (2) explicitly and leave the dual proof of (1) to the reader.
Let  $C$ be a coaugmented, split-conilpotent dg $R$-coalgebra, with comultiplication $\Delta$ and with associated sequence of cofibrations
$$R=C[-1]\xrightarrow {j_{0}}C[0]\xrightarrow {j_{1}}C[1] \xrightarrow {j_{2}}\cdots \xrightarrow{j_{n-1}} C[n-1]\xrightarrow {j_{n}} C[n]\xrightarrow {j_{n+1}} \cdots ,$$
giving rise to a filtration
$$R \subseteq \Om C[0]\subseteq \Om C[1] \subseteq \cdots \subseteq \Om C[n-1] \subseteq \Om C[n]\subseteq \cdots$$
of $\Om C$.  For any $n\geq 1$ and $c\in C[n]$, let $\bar c\in \coker j_{n}$ denote its class in the quotient. Note that since $j_{n}$ is $R$-split, $C[n]\cong C[n-1] \oplus \coker j_{n}$ as graded $R$-modules.  Moreover, since the induced comultiplication on the quotient coalgebra $\coker j_{n}$ is trivial,  
$$\overline \Delta (\bar c)= \Delta (\bar c) - \bar c \otimes 1- 1\otimes \bar c\in C[n-1]\otimes C[n-1]$$ 
for all $\bar c \in \coker j_{n}$.

By hypothesis $\Omega C[0] = T(s^{-1} \overline C[0], s^{-1} ds)$, which is cofibrant in $(\Alg_{R})_{\mathrm{Hur}}$. Higher stages of the filtration of $\Om C$ can be built up inductively from pushouts in $\Alg_{R}$, given in stage $n$ by
\[ \xymatrix{ T(s^{-2} \overline{\coker j_{n} }, s^{-2}ds^{2}) \ar[d] \ar[r] \ar@{}[dr]|(0.8){\ulcorner} & \Omega C[n-1] \ar[d] \\ T(s^{-2}\overline{\coker j_{n}} \oplus s^{-1}\overline{\coker j_{n}},D) \ar[r] & \Omega C[n].}\] 
Here, for all $\bar c\in \coker j_{n}$, the upper horizontal map sends $s^{-2}\bar c$ to $ s^{-1}c_i | s^{-1}c^i$, where $\overline \Delta(\bar c)=c_{i}\otimes c^{i}$ (using the Einstein summation convention), while the differential $D$ extends $s^{-2}ds^{2}$ and is given by $Ds^{-1}\bar c= s^{-2}\bar c-s^{-1}\overline {dc}$ . In particular, we can inductively build up the cobar construction as the colimit of pushouts of algebra morphisms given by applying $T$ to cofibrations in $(\Ch_{R})_{\mathrm{Hur}}$, whence $\Omega C$ is cofibrant in $\Alg_R$.
\end{proof}

\begin{proof}[Proof of Proposition \ref{prop:fib-cofib}] To prove (1), apply Lemma \ref{lem:alg-htpy-eq}, Remark \ref{rmk:split}, and Lemma \ref{lem:split-fib}(2). Similarly,  (2) follows from Lemma \ref{lem:coalg-htpy-eq}, Remark \ref{rmk:split}, and Lemma \ref{lem:split-fib}(1). 
\end{proof}

As a consequence of Corollary \ref{cor:(co)fib-rep}, we can show that  the two model structures we have constructed on $\Bialg_{R}$ are distinct, at least when $R$ is of characteristic different from 2.  It is probably possible to modify the example below to cover the case of characteristic 2 as well.

\begin{prop}\label{prop:distinct} If $R$ is a commutative ring of characteristic different from $2$, then the model structures   $(\Bialg_{R})_{\urcorner}$ and $(\Bialg_{R})_{\llcorner}$ are distinct.  In particular, there are bialgebras that are cofibrant in $(\Bialg_{R})_{\llcorner}$ but not in $(\Bialg_{R})_{\urcorner}$
\end{prop}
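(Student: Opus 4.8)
The plan is to compare the two model structures through their cofibrant objects. Since $(\Bialg_R)_{\urcorner}$ and $(\Bialg_R)_{\llcorner}$ share the same weak equivalences, and since Theorem~\ref{thm:square} exhibits the identity $(\Bialg_R)_{\urcorner}\to(\Bialg_R)_{\llcorner}$ as a left Quillen functor, we have $\Cof_{\urcorner}\subset\Cof_{\llcorner}$; hence the two structures coincide if and only if these classes of cofibrations agree. It therefore suffices to produce a single bialgebra that is cofibrant in $(\Bialg_R)_{\llcorner}$ but not in $(\Bialg_R)_{\urcorner}$, which simultaneously yields distinctness and the ``in particular'' clause. First I would record the two relevant characterizations of cofibrancy. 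Because $(\Bialg_R)_{\llcorner}$ is left-induced along the forgetful functor $V\colon\Bialg_R\to\Alg_R$, a bialgebra is $\llcorner$-cofibrant precisely when its underlying algebra is cofibrant in $(\Alg_R)_{\mathrm{Hur}}$; in particular every tensor bialgebra, and more generally every $\Omega\Bar H$ (which is $\llcorner$-cofibrant by Corollary~\ref{cor:(co)fib-rep} together with Lemma~\ref{lem:split-fib}), qualifies. On the other side, $(\Bialg_R)_{\urcorner}$ is right-induced along $U\colon\Bialg_R\to\Coalg_R$, so the left adjoint $\widehat F_{\Op P}$ is left Quillen; consequently every free bialgebra $\widehat F_{\Op P}(C)$ on a coalgebra $C$ cofibrant in $(\Coalg_R)_{\mathrm{Hur}}$ is $\urcorner$-cofibrant, and $\urcorner$-cofibrant objects are retracts of cell complexes assembled from such free bialgebras.

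The key point, and the source of the characteristic hypothesis, is that in $\widehat F_{\Op P}(C)$ the reduced comultiplication of an algebra generator stays among the generators, whereas a generic tensor bialgebra may have a generator whose reduced comultiplication involves a genuinely \emph{decomposable} element. The smallest instance of this is governed by the identity $\overline\Delta(x^2)=2\,(x\otimes x)$ for $x$ of even degree, whose right-hand side is nonzero exactly when $2\neq 0$ in $R$; in characteristic $2$ the element $x^2$ is itself primitive and the distinction collapses. Guided by this, I would take $H$ to be a tensor bialgebra that is free as an algebra on a generator $x$ of even degree together with a higher generator whose reduced comultiplication is built from $x^2$ (one convenient packaging is $H=\Omega\Bar H_0$ for a suitable small non-free bialgebra $H_0$, which is automatically $\llcorner$-cofibrant). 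Such an $H$ cannot be written as $\widehat F_{\Op P}(C)$ for any cofibrant $C$, since forming the smallest subcoalgebra containing the generators forces in the decomposable $x^2$.

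To prove that this $H$ is genuinely not $\urcorner$-cofibrant, I would argue by obstruction: it suffices to exhibit one trivial fibration $p\colon E\to H$ in $(\Bialg_R)_{\urcorner}$ admitting no section in $\Bialg_R$, for then $R\to H$ fails the left lifting property defining an $\urcorner$-cofibration. The map $p$ should be a square-zero extension by a contractible complex which twists the comultiplication of the top generator of $H$ by a coderivation $2$-cocycle supported on $x\otimes x$, with $p$ killing the new contractible summand. Any section $s$ would be forced, as an algebra map, to send $x\mapsto x$ and hence $x^2\mapsto s(x)^2=x^2$, while the coalgebra condition on the top generator would require $s$ to absorb the twisting cocycle; the resulting compatibility equation has no solution precisely because the relevant class, a multiple of $x\otimes x$, is nonzero in $R$. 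Were $H$ to be $\urcorner$-cofibrant, this section would exist, a contradiction.

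The main obstacle is the middle of this argument: producing $E$ as an honest bialgebra (so that the twisting datum is a genuine coassociativity cocycle and not merely a cochain) and, more delicately, verifying that $p$ is a trivial fibration in $(\Bialg_R)_{\urcorner}$ --- that is, that its underlying map is a trivial fibration in the \emph{left-induced} structure $(\Coalg_R)_{\mathrm{Hur}}$, whose trivial fibrations are not simply the underlying Hurewicz ones but require a coalgebra-compatible contraction of the kernel. Once the cocycle is pinned down and the kernel contraction is promoted to a coderivation homotopy, the no-section computation reduces to the linear-algebra fact that $x\otimes x$ cannot be cancelled without inverting $2$, which is exactly the characteristic hypothesis; a parallel dual construction, if preferred, instead produces a bialgebra fibrant in $(\Bialg_R)_{\urcorner}$ but not in $(\Bialg_R)_{\llcorner}$.
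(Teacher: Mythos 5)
Your strategy coincides with the paper's: both arguments produce a bialgebra of the form $\Omega \Bar H$ (automatically cofibrant in $(\Bialg_{R})_{\llcorner}$ by Corollary \ref{cor:(co)fib-rep}) and show it fails a lifting test against an explicit acyclic fibration of $(\Bialg_{R})_{\urcorner}$, and both locate the obstruction in exactly the mechanism you name: since $\Delta$ on $\Omega\Bar H$ is an algebra map, the square of an even-degree element $a$ with $\Delta(a)=a\otimes 1+\cdots+1\otimes a$ acquires the cross term $2\,a\otimes a$, which survives precisely when $2\neq 0$ in $R$. So the key insight and its attribution to the characteristic hypothesis are correct.

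The genuine gap is the one you flag yourself: the acyclic fibration is never constructed, and in this argument that construction \emph{is} the proof. Your sketch of $E\to H$ as a square-zero extension twisting the \emph{comultiplication} by a coderivation $2$-cocycle leaves open both whether $E$ is an honest (coassociative, associative, compatible) bialgebra and whether the map is an acyclic fibration in $(\Bialg_{R})_{\urcorner}$, i.e.\ in the left-induced structure $(\Coalg_{R})_{\mathrm{Hur}}$, whose acyclic fibrations are not detected by an underlying chain-level condition. The paper resolves both issues at once by a different placement of the twist: it takes $H=T^{\mathrm{co}}(x)$ with $|x|=m$ even and \emph{trivial} multiplication, and $\widehat H=T^{\mathrm{co}}(x,y,z)$ with $|y|=4m$, $|z|=4m+1$, $dz=y$, and multiplication zero except for $(x|x)\otimes(x|x)\mapsto y$ --- so it is the multiplication, not the comultiplication, that is deformed, and coassociativity is free because $\widehat H$ is cofree. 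The projection $p\colon\widehat H\to H$ is then $T^{\mathrm{co}}$ applied to an acyclic fibration of chain complexes, hence automatically an acyclic fibration in $(\Coalg_{R})_{\mathrm{Hur}}$ and therefore in $(\Bialg_{R})_{\urcorner}$; no coalgebra-compatible contraction of the kernel has to be produced by hand. Finally, instead of seeking a section of a fibration onto $\Omega\Bar H$ itself, the paper tests whether the counit $\varepsilon_{H}\colon\Omega\Bar H\to H$ lifts through $p$: any algebra-map lift is pinned down on generators by degree reasons, must send $s^{-1}s(x|x)\,|\,s^{-1}s(x|x)$ to $y$, and then cannot be a coalgebra map because $\widehat\Delta(y)=y\otimes 1+1\otimes y$ while the image of $\Delta$ contains $2\,(x|x)\otimes(x|x)$. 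Supplying this explicit $\widehat H$ (or an equivalent cofree model) is the step your proposal is missing.
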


\begin{proof} Fix  an even number $m\geq 2$, and let  $H$ denote the bialgebra that is cofree as a coalgebra on a free graded $R$-module $X$ with exactly one generator $x$, of degree $m$.  Note that for degree reasons the differential is trivial.  The multiplication on $H$ is chosen to be trivial as well.  We claim that $\Omega \Bar H$, which is cofibrant in $(\Bialg_{R})_{\llcorner}$ by Corollary \ref{cor:(co)fib-rep}, is not cofibrant in $(\Bialg_{R})_{\urcorner}$.

Let $\widehat H$ denote the bialgebra that is cofree as a coalgebra on a free graded $R$-module $\widehat X$ with exactly three generators $x$, $y$, and $z$, of degrees $m$, $4m$ and $4m+1$, respectively.  The differential $d$ sends $x$ and $y$ to 0 and $z$ to $y$.   The multiplication $\widehat \mu$ on $\widehat H$ is specified by requiring that the composite
$$\widehat H\otimes\widehat H \xrightarrow {\widehat \mu} \widehat H \twoheadrightarrow \widehat X$$
be zero except on $x|x \otimes x|x$, which is sent to $y$.  The coalgebra map $p\colon \widehat H\lra H$ that is induced by the obvious projection $\widehat X \lra X$ clearly respects both the differential and the multiplication.  It is an acyclic fibration in $(\Bialg_{R})_{\urcorner}$, as the underlying coalgebra map is obtained by applying the cofree coalgebra functor $T^{\mathrm{co}}$ to an acyclic fibration in $\Ch_{R}$.

To prove that $\Om \Bar H$ is not cofibrant in  $(\Bialg_{R})_{\urcorner}$, we show that there is no morphism of bialgebras lifting the counit $\ve_{H}\colon \Om \Bar H \lra H$ through the acyclic fibration $p\colon \widehat H \lra H$.  If  $\widehat \ve\colon \Om \Bar H \lra \widehat H$ is a morphism of dg $R$-algebras such that $p\widehat \ve= \ve_{H}$, then it necessarily satisfies
$$\widehat \ve (s^{-1}sx)=x \quad\text{and}\quad \widehat \ve \big(s^{-1}s(x|x)\big)=x|x,$$
for degree reasons.  Again for degree reasons, there is some $a\in R$ such that 
$$\Delta \big(s^{-1}s(x|x)\big)= s^{-1}s(x|x) \otimes 1  + a\cdot ( s^{-1}sx\otimes s^{-1}sx) + 1 \otimes s^{-1}s(x|x),$$
where $\Delta$ is the comultiplication on $\Om \Bar H$.  Since $\Delta$ is an algebra morphism,
{\small\begin{align*}
\Delta \big(s^{-1}s(x|x)|s^{-1}s(x|x)\big)=&s^{-1}s(x|x)|s^{-1}s(x|x)\otimes 1 + 1 \otimes s^{-1}s(x|x)|s^{-1}s(x|x)\\
& +2 s^{-1}s(x|x)\otimes s^{-1}s(x|x)\\
&+ a\cdot \big(s^{-1}s(x|x) |s^{-1}sx\otimes s^{-1}sx + s^{-1}sx\otimes s^{-1}s(x|x) |s^{-1}sx\\
&\quad\qquad+s^{-1}sx|s^{-1}s(x|x)\otimes s^{-1}sx+s^{-1}sx\otimes s^{-1}sx|s^{-1}s(x|x)\big)\\
&+a^{2}\cdot (s^{-1}sx|s^{-1}sx\otimes s^{-1}sx| s^{-1}sx),
\end{align*}}
and therefore, since $\widehat \ve$ is  a morphism of algebras,
{\small\begin{align*}
(\widehat \ve\otimes \widehat \ve)\Delta \big(s^{-1}s(x|x)|s^{-1}s(x|x)\big)=&(x|x)\cdot (x|x)\otimes 1 + 1 \otimes (x|x)\cdot (x|x)+2 (x|x)\otimes (x|x)\\
&+ a\cdot \big((x|x) \cdot x\otimes x + x\otimes (x|x) \cdot x\\
&\quad\qquad+ x\cdot (x|x)\otimes x+ x\otimes x\cdot (x|x)\big)\\
&+a^{2}\cdot( x\cdot x\otimes x\cdot x)\\
=& y\otimes 1 + 1 \otimes y+2 (x|x)\otimes (x|x),
\end{align*}}
where the last equality follows from the definition of the multiplication in $\widehat H$.  On the other hand, if $\widehat \Delta$ denotes the comultiplication on $\widehat H$, then
$$\widehat \Delta \widehat\ve\big(s^{-1}s(x|x)|s^{-1}s(x|x)\big)=\Delta (y)=y\otimes 1 + 1 \otimes y,$$
whence $\widehat \ve$ is not a morphism of coalgebras if $2\not =0$.  It follows that a lift of $\ve_{H}$ through $p$ cannot be simultaneously a morphism of algebras and a morphism of coalgebras if the characteristic of $R$ is different from 2.
\end{proof}

\begin{rmk}  We conjecture that for any twisting morphism $\tau\colon \cQ \lra \cP$ such that the unit and counit of the induced adjunction 
$$\adjunct{\cQ\text{-}\Coalg}{\cP\text{-}\Alg}{\Om_{\tau}}{\Bar_{\tau}}$$
are objectwise chain homotopy equivalences, analogues of  Proposition \ref{prop:fib-cofib} and Corollary \ref{cor:(co)fib-rep} hold.  Moreover it is likely that a counter-example similar to that above can be constructed in this context, establishing that 
$\big((\Op P, \Op Q)\text{-}\Bialg_{R}\big)_{\urcorner}$ and $\big((\Op P, \Op Q)\text{-}\Bialg_{R}\big)_{\llcorner}$ are indeed distinct.  Proving analogues of  Corollary \ref{cor:(co)fib-rep} and of the counter-example above would require a generalization of  \cite[Theorem 3.12]{hess:twisting}, i.e., that if $H$ is a $(\cP, \cQ)$-bialgebra, then $\Om_{\tau}\Bar_{\tau} H$ and $\Bar _{\tau}\Om_{\tau} H$ both admit natural $(\cP, \cQ)$-bialgebra structures.  We suspect that this is the case, at least under reasonable conditions on the twisting morphism $\tau$, but the proof is beyond the scope of this article. 
\end{rmk}

\subsection{Comodule algebras}

Let $H$ be a bimonoid in $\Ch_{R}$, and consider $\Alg_R^H$, the category of $H$-comodules in the category of $\Alg_R$ of differential graded $R$-algebras.  In \cite[Theorem 3.8]{six-author} it was shown that if $R$ is a field, and $H$ is of finite type and non-negatively graded, then the category of non-negatively graded $H$-comodule algebras $(\Alg^{+}_R)^H$ admits a model category structure left-induced from the model category structure on $\Alg^{+}_{R}$ that is right-induced from the projective structure on $\Ch_{R}$.  Here we generalize this result to any commutative ring and any bimonoid $H$, at the price of working with chain homotopy equivalences rather than quasi-isomorphisms as our weak equivalences.

There is  a commutative diagram of forgetful functors $(UV=VU)$ admitting adjoints
\[ \xymatrix@R=4pc@C=4pc{ \Ch_R \ar@{}[r]|{\perp} \ar@<-1ex>[d]_T \ar@<-1ex>[r]_{-\otimes H} & \Ch_R^H  \ar@<1ex>[d]^T \ar@<-1ex>[l]_V \\  
\ar@{}[r]|{\top} \ar@{}[u]|{\dashv} \Alg_R \ar@<1ex>[r]^{- \otimes H} \ar@<-1ex>[u]_U & \Alg_R^H. \ar@<1ex>[u]^U \ar@<1ex>[l]^V\ar@{}[u]|{\vdash} }\] 
As recalled in Example \ref{ex:chain}, the Hurewicz model structure is enriched cofibrantly generated and thus accessible, and all four categories are locally presentable. 

The following theorem is now an immediate consequence of Corollary \ref{cor:monad_comonad}, since there is a distributive law 
$$\chi:T\circ (-\otimes H) \Longrightarrow (-\otimes H) \circ T,$$
with components given by
$$\chi_{X}:T(X\otimes H) \lra (TX) \otimes H: (x_{1}\otimes h_{1})|\cdots | (x_{n}\otimes h_{n}) \mapsto (x_{1}|\cdots |x_{n}) \otimes h_{1}\cdots h_{n}.$$

\begin{thm}\label{thm:comod-alg}There exist  right- and  left-induced model structures on $\Alg_R^H$, created by $U:\Alg_R^H \lra \Ch^{H}$ and  $V: \Alg_R^H\lra \Alg_{R}$ repectively, with respect to the model structures $(\Ch_R^H)_{\mathrm{Hur}}$ and $(\Alg_{R})_{\mathrm{Hur}}$. In particular, the identity defines a left Quillen functor from the right-induced model structure to the left-induced one:
\[ \xymatrix@C=4pc@R=4pc{  (\Alg_R^H)_{\mathrm{right}} \ar@<1ex>[r]^-\id \ar@{}[r]|-\perp & (\Alg_R^H)_{\mathrm{left}}. \ar@<1ex>[l]^-\id}\]
\end{thm}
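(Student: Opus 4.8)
The plan is to recognize $\Alg_R^H$ as the category $\Bialg_\KK^\TT(\Ch_R)$ of bialgebras for a monad--comonad pair on $\Ch_R$ carrying a distributive law, and then to apply Corollary~\ref{cor:monad_comonad} essentially verbatim. First I would take the base to be $\M=(\Ch_R)_{\mathrm{Hur}}$, which is a locally presentable accessible model category by Example~\ref{ex:chain} together with Theorem~\ref{thm:acc-awfs}. Let $\TT=T$ be the free associative algebra monad, so that $\Alg^\TT(\Ch_R)=\Alg_R$, and let $\KK=-\otimes H$ be the cofree $H$-comodule comonad (built from the coalgebra structure of the bimonoid $H$), so that $\Coalg_\KK(\Ch_R)=\Ch_R^H$. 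Both are accessible: $T$ preserves filtered colimits since tensor powers and coproducts do, while $-\otimes H$ preserves all colimits. The natural transformation $\chi$ displayed above supplies the distributive law $TK\Rightarrow KT$, and I would confirm it satisfies the two coherence diagrams of \eqref{eq:distributive-law} by a routine check on generators; with $\chi$ in hand, $\Bialg_\KK^\TT(\Ch_R)$ is precisely $\Alg_R^H$.

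The only non-formal hypotheses of Corollary~\ref{cor:monad_comonad} are the existence of the two induced model structures on the algebras and on the comodules. The right-induced structure $(\Alg_R)_{\mathrm{Hur}}$, created by $U\colon\Alg_R\to\Ch_R$, exists by Proposition~\ref{prop:alg-mod-cat} applied to the associative operad $\Op A$, which is $\Sigma$-split. The left-induced structure $(\Ch_R^H)_{\mathrm{Hur}}$, created by $V\colon\Ch_R^H\to\Ch_R$, exists by Corollary~\ref{cor:comodule}. Feeding these into Corollary~\ref{cor:monad_comonad} immediately yields the right-induced model structure on $\Alg_R^H$ created by $U\colon\Alg_R^H\to\Ch_R^H$ and the left-induced one created by $V\colon\Alg_R^H\to\Alg_R$; in either case the weak equivalences are exactly the bialgebra maps whose underlying chain map is a chain homotopy equivalence, since both forgetful paths down to $\Ch_R$ create the weak equivalences.

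For the final comparison I would invoke the conclusion of Theorem~\ref{thm:square}, on which Corollary~\ref{cor:monad_comonad} rests: the identity is the left member of a Quillen equivalence from the right-induced to the left-induced structure, and is in particular the asserted left Quillen functor. This uses only the compatibilities $UV=VU$ and $TV\cong VT$ (equivalently $U(-\otimes H)\cong(-\otimes H)U$), both of which are encoded in the square of adjunctions by the distributive law $\chi$. Since the substantive model-categorical work has already been done in Proposition~\ref{prop:alg-mod-cat} and Corollary~\ref{cor:comodule}, I expect no genuine obstacle; the only points requiring any care are the accessibility of $T$ and of $-\otimes H$ and the coherence of $\chi$, all of which are straightforward.
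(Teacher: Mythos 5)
Your proposal is correct and follows essentially the same route as the paper: the authors also exhibit the square of adjunctions with the free-algebra monad $T$ and the cofree-comodule comonad $-\otimes H$, record the distributive law $\chi_X\colon (x_1\otimes h_1)|\cdots|(x_n\otimes h_n)\mapsto (x_1|\cdots|x_n)\otimes h_1\cdots h_n$, and deduce the theorem as an immediate consequence of Corollary~\ref{cor:monad_comonad}, with the two input model structures supplied by Proposition~\ref{prop:alg-mod-cat} and Corollary~\ref{cor:comodule}. The only difference is that you spell out the accessibility of $T$ and $-\otimes H$ and the coherence check for $\chi$, which the paper leaves implicit.
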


\begin{rmk}  We suspect that the two model category structures of the theorem above are different in general, but do not have a specific counter-example.
\end{rmk}

\subsection{Coring comodules}\label{subsec.dg.coring}

Let $A \in \Alg_R$, and let  $\Coring_A$ denote the category of $A$-corings, i.e., comonoids in the monoidal category of $(A,A)$-bimodules, where the monoidal product is given by tensoring over $A$. For any $A$-coring $C$ there are adjoint functors
\[ \xymatrix@R=4pc@C=4pc{ \Ch_R \ar@<1ex>[r]^{- \otimes A} \ar@{}[r]|{\perp} & \Mod_A \ar@<1ex>[l]^U \ar@<-1ex>[d]_{-\otimes_A C} \ar@{}[d]|{\vdash} \\ & (\Mod_A)^C \ar@<-1ex>[u]_V}\] Here $\Mod_A$ is the category of right $A$-modules, and $(\Mod_A)^C$ is the category of $C$-comodules in right $A$-modules. 

In \cite[Theorem 6.2]{hess-shipley} it was shown that  $(\Mod^{+}_A)^C$, the category of non-negatively graded dg $C$-comodules in $A$-modules, admits a model category structure left-induced from an injective model category structure on $\Mod^{+}_A$ for which the weak equivalences are quasi-isomorphisms, as long as $R$ is semi-hereditary, $A$ is simply connected, and $C$ is $A$-semifree of finite type.   Here we prove a much more general existence result, for any commutative ring $R$, any dg $R$-algebra $A$, and any dg $A$-coring $C$.

Note that in this case it is not possible to reverse the order of the adjunctions, so there is no square diagram as before. Note also that all three categories are locally presentable.

We endow  $\Mod_A$ either with the $r$-module structure of \cite{barthel-may-riehl}, which is right-induced from the Hurewicz model structure on $\Ch_R$ along $U$, or with the injective model category structure of \cite[Proposition 3.11]{hess-shipley} (see also Theorem \ref{thm:a-mod-cylinder}), in which the weak equivalences are the quasi-isomorphisms, and the cofibrations are the levelwise injections.  We want to left-induce these model structures from $\Mod_A$ to $(\Mod_A)^C$ along $V$. By Theorem \ref{thm:right-awfs} we can transfer both weak factorization systems, so it remains only to prove the acyclicity condition, i.e., $\rlp{(V^{-1}\Cof)}\subset V^{-1}\WE$, where $\Cof$ and $\WE$ denote the cofibrations and weak equivalences in either the $r$-model structure or the injective model structure on $\Mod_{A}$. 

For lifting the injective structure the usual cylinder construction works well, see below. 
For establishing acyclicity for the $r$-model structure we need a cofibrant replacement functor. For this we use the two-sided bar construction and refer the reader to \cite[Appendix]{gugenheim-may} for an introduction to this construction, of which the important properties for us include the following. 

\begin{prop}\label{prop:Bar_properties} The functor $\Bar (-,A,A)\colon \Mod_{A}\lra \Mod_{A}$
\begin{enumerate}
\item   lifts to an endofunctor on $(\Mod_A)^C$ compatible with the natural augmentation $\varepsilon \colon \Bar (-,A,A) \lra \id$, and
\item  preserves colimits. 
\end{enumerate}
\end{prop}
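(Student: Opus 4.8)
The plan is to separate the two assertions and to base both on the identification of the two-sided bar construction with a tensor functor. Writing $\Bar(A,A,A)$ for the two-sided bar resolution of $A$ as an $(A,A)$-bimodule, there is a natural isomorphism of right $A$-modules $\Bar(M,A,A)\cong M\otimes_A\Bar(A,A,A)$, under which the augmentation $\ve_M\colon\Bar(M,A,A)\lra M$ is obtained by applying $M\otimes_A-$ to the bimodule augmentation $\Bar(A,A,A)\lra A$. Equivalently, $\Bar(M,A,A)$ is the realization of the simplicial right $A$-module $[n]\mapsto M\otimes A^{\otimes n}\otimes A$ whose structure maps are built from the multiplication of $A$ and the action of $A$ on $M$. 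I would use one of these two descriptions throughout.

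Assertion (2) should be the easy half. Under the identification above, $\Bar(-,A,A)\cong -\otimes_A\Bar(A,A,A)$ is the functor of tensoring with a fixed bimodule, which is a left adjoint (its right adjoint is the corresponding internal hom) and therefore preserves all colimits. If one prefers to avoid the identification, one argues instead that each functor $M\mapsto M\otimes A^{\otimes n}\otimes A$ is cocontinuous --- colimits of right $A$-modules are created in $\Ch_R$, and tensoring over $R$ with a fixed complex preserves colimits --- and that the realization is itself a colimit, so that $\Bar(-,A,A)$ preserves colimits because colimits commute with colimits. I would write whichever argument is shorter.

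Assertion (1) carries the real content, and I expect the construction of the $C$-comodule structure to be the main obstacle. The clean way to organize it is via the comonad $G=-\otimes_A C$ on $\Mod_A$, for which $(\Mod_A)^C$ is precisely the category of $G$-coalgebras: a lift of $\Bar(-,A,A)$ to $(\Mod_A)^C$ with underlying functor $\Bar(V-,A,A)$ amounts to a natural transformation $\Bar\big((-)\otimes_A C,A,A\big)\Rightarrow\Bar(-,A,A)\otimes_A C$ compatible with the counit and comultiplication of $G$. The subtlety is that the free right $A$-module structure on $\Bar(M,A,A)$ --- the one along which $\Bar(M,A,A)\otimes_A C$ is formed, and the one responsible for the underlying cofibrancy used later --- is carried by the outermost tensor factor, whereas the coaction $\rho\colon M\lra M\otimes_A C$ acts on the $M$-factor; so the required comparison does not come from a naive levelwise application of $\rho$, and producing it forces one to use the bimodule and comultiplication structure of the coring $C$ essentially. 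I would therefore spend most of the effort exhibiting this coaction explicitly and checking that it is counital, coassociative, and compatible with the bar differential. Once the $C$-comodule structure is in place, the remaining points --- that $\ve$ is a morphism of $C$-comodules (it is induced by the bimodule augmentation $\Bar(A,A,A)\lra A$), that the lift is natural in $M$, and that its underlying right $A$-module is $\Bar(VM,A,A)$ --- should then follow formally.
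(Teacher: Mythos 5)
Your treatment of (2) is correct and matches the paper's, which simply notes that colimits in $\Mod_A$ are created in $\Ch_R$, where tensoring with a fixed object and direct sums are cocontinuous; your left-adjoint packaging $\Bar(-,A,A)\cong -\otimes_A\Bar(A,A,A)$ buys the same thing. The problem is (1): you correctly identify that the entire content of the statement is the explicit coaction $\widehat\rho\colon \Bar(X,A,A)\lra \Bar(X,A,A)\otimes_A C$, and correctly observe that it cannot be obtained by naively applying $\rho$ levelwise, but you then stop exactly where the proof begins --- no candidate for $\widehat\rho$ is ever written down, so there is nothing available to check for counitality, coassociativity, compatibility with the bar differential, or compatibility with $\ve$. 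As it stands this is a plan for a proof, not a proof.

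For comparison, the paper's argument consists essentially of one formula: writing $\rho(x)=x_i\otimes v^i$ (Einstein convention) and using the identification $\Bar(X,A,A)\otimes_A C\cong \Bar(X,A,C)$, it sets
\[
\widehat\rho\bigl(x\otimes sa_1|sa_2|\cdots|sa_n\otimes b\bigr)=
\begin{cases}
x_i\otimes v^i b & n=0,\\
0 & n>0,
\end{cases}
\]
and asserts that the remaining verifications (chain map, right $A$-linearity, naturality in $(X,\rho)$, and that $\ve$ is a comodule map) are straightforward. This confirms your warning that the coaction is not ``$\rho$ on the $X$-factor, identity elsewhere.'' But note that the check you propose to defer --- counitality in particular --- is precisely where the delicacy sits: a coaction that vanishes in all positive bar degrees interacts nontrivially with the counit identity $(\id\otimes\epsilon)\circ\widehat\rho=\id$, so this verification cannot be waved through as formal and is the one step that genuinely needs to be written out. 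In short: the skeleton of your argument is right, but the load-bearing construction and its verification are both missing.
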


\begin{proof}
(1) Let $\rho\colon X \lra X\otimes _{A}C$ denote the $C$-coaction on $A$.  For any $x\in X$, write $\rho(x)= x_i \otimes v^i$ using the Einstein summation convention, i.e., we sum over any index that is both a subscript and a superscript, as $i$ is here.

For any element $x \otimes sa_1 |sa_2 | ... |sa_n \otimes b$ of $\Bar(X,A,A)$, we define the coaction map 
$$\widehat \rho: \Bar(X,A,A) \lra \Bar(X,A,A)\otimes_A C \cong \Bar(X,A,C)$$ 
by
$$x \otimes sa_1 |sa_2 | ... |sa_n \otimes b \mapsto \begin{cases} x_i \otimes v^ib &: n=0, \\
0 &: n>0.  \end{cases}$$
Straightforward computations show that $\widehat \rho$ commutes with the differentials, is a map of right $A$ modules, is natural in $(X,\rho)$, and makes the natural augmentation 
$$\varepsilon \colon  \Bar(X,A,A) \lra X: x \otimes sa_1 |sa_2 | ... |sa_n \otimes b \mapsto \begin{cases} xb &: n=0 \\
0 &: n>0 \end{cases}$$
into a map of comodules.

(2) Since colimits in $\Mod_{A}$ are created in $\Ch_{R}$, it suffices to observe that tensor product with a fixed object and direct sums preserve colimits in $\Ch_{R}$. 
\end{proof}

\begin{cor} Let $C$ be any $A$-coring.  When equipped with cofibrations and weak equivalences created in either the $r$-model structure or the injective model structure on  $\Mod_{A}$, the category $(\Mod_A)^C$ admits a cofibrant replacement functor $Q: (\Mod_A)^C\lra (\Mod_A)^C$, specified on objects by
$Q(X,\rho)=\big(\Bar (X,A,A), \widehat \rho \big)$.
\end{cor}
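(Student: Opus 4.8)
The plan is to verify the three defining properties of a functorial \emph{cofibrant replacement}: that $Q$ is an endofunctor of $(\Mod_A)^C$, that the augmentation assembles into a natural transformation $\varepsilon\colon Q\Rightarrow\id$ with each component a weak equivalence, and that each $Q(X,\rho)$ is cofibrant. The first two pieces of data are supplied directly by Proposition~\ref{prop:Bar_properties}(1), which lifts $\Bar(-,A,A)$ to an endofunctor of $(\Mod_A)^C$ compatible with the natural augmentation. Since the model structure on $(\Mod_A)^C$ is left-induced along $V$, both its cofibrations and its weak equivalences are created by $V$; hence it suffices to prove that the underlying $A$-module $VQ(X,\rho)=\Bar(X,A,A)$ is cofibrant in $\Mod_A$ and that the underlying augmentation $V\varepsilon=\varepsilon\colon\Bar(X,A,A)\lra X$ is a weak equivalence in $\Mod_A$, in each of the two model structures.

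First I would dispatch the weak-equivalence claim, uniformly in both structures. The two-sided bar construction carries the usual extra degeneracy inserting the unit of $A$, which provides a contracting homotopy exhibiting $\varepsilon$ as a chain homotopy equivalence of underlying chain complexes (see \cite[Appendix]{gugenheim-may}). The weak equivalences of the $r$-model structure are exactly the maps whose underlying chain map is a chain homotopy equivalence, and every chain homotopy equivalence is in particular a quasi-isomorphism, so $\varepsilon$ is a weak equivalence in both the $r$-model structure and the injective model structure.

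It remains to prove cofibrancy of $\Bar(X,A,A)$ in $\Mod_A$. In the injective model structure the cofibrations are the degreewise monomorphisms, so $0\lra\Bar(X,A,A)$ is automatically a cofibration and there is nothing to check. For the $r$-model structure I would run the standard bar-filtration argument, exactly parallel to the cofibrancy proof for $\Omega C$ in Lemma~\ref{lem:split-fib}. Write $F=-\otimes A$ for the free/extended-module functor; since the $r$-model structure is right-induced along $U$, the functor $F$ is left Quillen. Filter $\Bar(X,A,A)$ by the sub-dg-$A$-modules $F_p=\bigoplus_{n\le p}X\otimes(s\overline A)^{\otimes n}\otimes A$, so that $F_0=F(X)$ and $F_p/F_{p-1}\cong F(W_p)$ with $W_p=X\otimes(s\overline A)^{\otimes p}$. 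The key step is to present each inclusion $F_{p-1}\hookrightarrow F_p$ as the pushout of $F(\si W_p\hookrightarrow\operatorname{cone}(\si W_p))$ along an attaching map $F(\si W_p)\lra F_{p-1}$ built from the bar part of the differential. Granting this, each $F(\si W_p\hookrightarrow\operatorname{cone}(\si W_p))$ is an $r$-cofibration, being $F$ of a degreewise-split monomorphism (hence a cofibration in $(\Ch_R)_{\mathrm{Hur}}$), so every $F_{p-1}\hookrightarrow F_p$ is an $r$-cofibration. As $F_0=F(X)$ is $r$-cofibrant (because $X$ is cofibrant in $(\Ch_R)_{\mathrm{Hur}}$ and $F$ is left Quillen) and $\Bar(X,A,A)=\colim_p F_p$, it follows that $\Bar(X,A,A)$ is $r$-cofibrant.

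The main obstacle is the pushout presentation of the bar filtration. One must align the suspensions so that the cone cofiber $\operatorname{cone}(\si W_p)/\si W_p\cong W_p$ reproduces the $p$-th filtration layer $F_p/F_{p-1}\cong F(W_p)$, and then verify that the cone differential together with the attaching map $F(\si W_p)\to F_{p-1}$ reassembles precisely the total (internal plus bar) differential of $F_p$. This is a routine but bookkeeping-heavy computation with the bar differential; everything else reduces either to the naturality statement of Proposition~\ref{prop:Bar_properties}(1) or to the fact that cofibrations and weak equivalences in $(\Mod_A)^C$ are created by $V$.
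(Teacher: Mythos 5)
Your argument is correct and follows the same overall structure as the paper's proof: functoriality and naturality of the augmentation come from Proposition \ref{prop:Bar_properties}, the augmentation is a chain homotopy equivalence via the extra degeneracy (hence a weak equivalence for both structures), and cofibrancy is immediate in the injective case. The only divergence is the $r$-cofibrancy of $\Bar(X,A,A)$: the paper simply cites \cite[Proposition 10.18]{barthel-may-riehl}, whereas you reprove it by filtering the bar construction by word length and exhibiting each stage $F_{p-1}\hookrightarrow F_p$ as a pushout of $-\otimes A$ applied to a cone inclusion on $\si W_p$. That filtration argument is sound --- it is essentially the content of the cited result, and it runs exactly parallel to the paper's own proof of Lemma \ref{lem:split-fib}(2) for $\Om C$, so the ``bookkeeping-heavy'' pushout verification you defer is genuinely routine (the attaching map $F(\si W_p)\to F_{p-1}$ is a chain map precisely because the bar differential squares to zero). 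The trade-off is self-containedness at the cost of length; the paper's citation buys brevity.
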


\begin{proof} Recall that the augmentation $\varepsilon _{X}\colon\Bar (X,A,A)\lra X$ is a chain homotopy equivalence, and therefore a quasi-isomorphism, for all $A$-modules $X$, since it arises as the realization of an augmented simplicial object with extra degeneracies. By Proposition \ref{prop:Bar_properties} and its proof, it remains only to establish cofibrancy of $\Bar (X,A,A)$, which is immediate in the case of the injective structure and  is the content of \cite[Proposition 10.18]{barthel-may-riehl} in the case of the $r$-model structure.
\end{proof}

Observe finally that $(\Mod_A)^C$ admits good cylinder objects with respect to both the $r$-model structure and the injective structure, given by tensoring with $R\oplus R \rightarrowtail I \xrightarrow \sim R$.  The next theorem is therefore an immediate consequence of Theorem \ref{thm:quillen-path}.

\begin{thm}\label{thm:a-coring} Let $R$ be any commutative ring. For any $A\in \Alg_{R}$ and any $A$-coring $C$, the category $(\Mod_A)^C$ of $C$-comodules in $A$-modules admits  model category structures left-induced from the $r$-model structure and from the injective model structure on $\Mod_{A}$ via the forgetful functor.
\end{thm}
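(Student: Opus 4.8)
The plan is to deduce the statement directly from the dual Quillen path-object argument of Theorem \ref{thm:quillen-path}, since every hypothesis has already been assembled in the preceding results; I would treat the $r$-model structure and the injective model structure on $\Mod_A$ uniformly. First I would apply Theorem \ref{thm:quillen-path} to the forgetful/cofree adjunction
\[ \xymatrix@C=4pc{ (\Mod_A)^C \ar@<1ex>[r]^-V \ar@{}[r]|-\perp & \Mod_A, \ar@<1ex>[l]^-{-\otimes_A C}}\]
recording that all categories in sight are locally presentable (as already observed) and that $\Mod_A$ is an \emph{accessible} model category in each case: the $r$-model structure is enriched cofibrantly generated (hence accessible by Theorem \ref{thm:acc-awfs}), while the injective model structure is produced via Theorem \ref{thm:a-mod-cylinder}, and accessibility is preserved under induction by Corollary \ref{cor:acyclicity-reduction}.

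Next I would verify the three conditions of Theorem \ref{thm:quillen-path}. Conditions (1) and (2) --- that $(\Mod_A)^C$ admits underlying-cofibrant replacements --- are supplied verbatim by the cofibrant replacement functor $Q(X,\rho)=\big(\Bar(X,A,A),\widehat\rho\,\big)$ of the Corollary immediately above: the augmentation $\varepsilon_X\colon \Bar(X,A,A)\to X$ is a chain homotopy equivalence, hence a weak equivalence for both the $r$- and the injective structures, and $\Bar(X,A,A)$ is cofibrant in $\Mod_A$ in either case. Because $\Bar(-,A,A)$ is a genuine functor that lifts to $(\Mod_A)^C$ compatibly with $\varepsilon$ (Proposition \ref{prop:Bar_properties}), condition (2) holds as well. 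Condition (3) is the final observation preceding the theorem: applying $QX\otimes-$ to the good cylinder $R\oplus R\rightarrowtail I\xrightarrow{\sim}R$ of $\Ch_R$ yields a factorization $QX\coprod QX\rightarrowtail QX\otimes I\xrightarrow{\sim}QX$ whose image under $V$ is a cofibration followed by a weak equivalence, exactly as in the proof of Theorem \ref{thm:a-mod-cylinder}, since the underlying object of $QX$ is cofibrant.

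With the hypotheses in place, Theorem \ref{thm:quillen-path} delivers the acyclicity condition $\rlp{(V^{-1}\Cof)}\subset V^{-1}\WE$ for the left-induced weak factorization systems, whose existence on $(\Mod_A)^C$ is guaranteed by Theorem \ref{thm:left-awfs} ($V$ is cocontinuous and both weak factorization systems on $\Mod_A$ are accessible); Proposition \ref{prop:acyclicity-reduction} then produces the left-induced model structure in each case. There is essentially no remaining obstacle, as the substantive work --- lifting the two-sided bar construction to a cofibrant replacement on comodules and checking that the interval cylinder lifts --- was carried out in Proposition \ref{prop:Bar_properties} and its corollary. The only point requiring a little care is the uniform handling of the two ambient structures, since cofibrancy of $\Bar(X,A,A)$ is immediate for the injective structure but for the $r$-model structure relies on \cite[Proposition 10.18]{barthel-may-riehl}; both, however, are already recorded.
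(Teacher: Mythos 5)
Your proposal is correct and follows essentially the same route as the paper: both verify the hypotheses of Theorem \ref{thm:quillen-path} for the forgetful/cofree adjunction using the lifted bar-construction cofibrant replacement $Q(X,\rho)=\big(\Bar(X,A,A),\widehat\rho\,\big)$ together with the cylinder obtained by tensoring with $R\oplus R\rightarrowtail I\xrightarrow{\sim}R$. The only cosmetic difference is that you treat the $r$-model and injective structures uniformly through $Q$, whereas the paper observes that in the injective case all objects are already cofibrant, so the bar-construction replacement is only strictly needed for the $r$-model structure.
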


\section{The Reedy model structure as a model category of bialgebras}\label{sec:diagram_ex}

{Recall from \S\ref{ssec:injective} that for} any small category $\D$ and bicomplete category $\M$, the forgetful functor $\M^\D \lra \M^{\ob\D}$ is both monadic and comonadic, with left and right adjoints given by left and right Kan extension. If $\M$ is a model category, then $\M^{\ob\D} \cong \prod_{\ob\D}\M$ inherits a pointwise-defined model structure, with weak equivalences, cofibrations, and fibrations all defined pointwise in $\M$. The right-induced model structure on $\M^\D$ is called the \emph{projective model structure}, while the left-induced model structure is called the \emph{injective model structure}.

A \emph{Reedy category} is a small category $\D$ equipped with a direct subcategory $\D^+ \subset \D$ of ``degree-increasing'' morphisms and an inverse subcategory $\D^- \subset \D$ of ``degree-decreasing'' morphisms. In \S\ref{ssec:reedy}, we show that if $\D$ is a Reedy category, then the category $\M^\D$ of Reedy diagrams is the category of bialgebras for a monad and a comonad on $\M^{\ob\D}$ induced respectively by the forgetful functors $\M^{\D^+} \lra \M^{\ob\D}$ and $\M^{\D^-} \lra \M^{\ob\D}$. Moreover, the Reedy model structure can be understood as the model structure that is left-induced from the projective model structure on diagrams indexed by the directed category $\D^+$, or equally as the model structure that is right-induced from the injective model structure on the inverse category $\D^-$. In this way, the Reedy model structure on $\M^\D$ can be recovered from Theorem \ref{thm:square} in the case where $\M$ is an accessible model category.

{In \S\ref{ssec:generalized}, we extend these observations to generalized Reedy categories. In \S\ref{ssec:exact}, we observe that our proofs imply that the inclusion of these subcategories into a Reedy category $\D$ defines an \emph{exact square}. Exact squares are an essential ingredient in the theory of derivators, a very general framework in which to study homotopy limits and colimits. While several general classes of exact squares are known, none appear to accommodate this particular Reedy category example, which for that reason we suspect will be of interest.}

\subsection{Reedy diagrams as bialgebras}\label{ssec:reedy}

A \emph{direct category} is a small category $\D$ that can be equipped with a degree function $d \colon \ob\D \lra \mathbb{N}$ such that every non-identity morphism of $\D$ ``raises the degree,'' in the sense that the degree of its codomain is strictly greater than the degree of its domain. {The dual notion is that of an  \emph{inverse category}, i.e., a small category whose opposite is a direct category.   Homotopy-theoretic interest in direct categories stems from the following classical result; see, e.g., \cite[5.1.3]{hovey}.

\begin{prop} If $\D$ is a direct category, and $\M$ is a model category, then $\M^\D$ admits the right-induced model structure from the pointwise model structure on $\M^{\ob\D}$. Dually, if $\D$ is an inverse category, and $\M$ is a model category, then $\M^\D$ admits the left-induced model structure from the pointwise model structure on $\M^{\ob\D}$.
\end{prop}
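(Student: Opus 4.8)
The plan is to establish the direct category statement; the inverse category statement then follows by applying it to the model category $\M^\op$ and the direct category $\D^\op$, since $(-)^\op$ interchanges direct and inverse categories, left- and right-induction, and the roles of cofibrations and fibrations (as in the duality exploited in the proof of Proposition~\ref{prop:acyclicity-reduction}).

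So let $\D$ be direct with degree function $d\colon \ob\D\lra\mathbb{N}$, and let $U\colon \M^\D\lra\M^{\ob\D}$ be the forgetful functor. I would take $\WE$ and $\Fib$ to be the maps that $U$ carries into pointwise weak equivalences and pointwise fibrations, so that $\WE$ inherits the $2$-of-$3$ property objectwise from $\M$, and set $\Cof:=\llp{(\Fib\cap\WE)}$. For an object $x$, write $L_xX=\colim X$ for the \emph{latching object}, the colimit over the (small) category of non-identity arrows into $x$; since $\D$ is direct, the sources of such arrows have strictly smaller degree, so $L_xX$ depends only on the values of $X$ below degree $d(x)$. The \emph{relative latching map} of $f\colon X\to Y$ at $x$ is the induced map $X_x\cup_{L_xX}L_xY\to Y_x$.

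The first real step is to build the two functorial factorizations by transfinite induction on $d$. Given $f\colon X\to Y$, suppose a left factor $Z$ together with maps $X\to Z\to Y$ has been defined on all objects of degree $<n$. For $x$ of degree $n$, the latching object $L_xZ$ is already determined, and there is a canonical map $X_x\cup_{L_xX}L_xZ\lra Y_x$. Factoring it in $\M$ --- as a cofibration followed by an acyclic fibration for the first factorization, or as an acyclic cofibration followed by a fibration for the second --- defines $Z_x$, the latching map $L_xZ\to Z_x$, and the component $Z_x\to Y_x$; by construction the relative latching maps of $X\to Z$ are (acyclic) cofibrations of $\M$, while $Z\to Y$ is a pointwise (acyclic) fibration. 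A routine induction on degree then shows that any map whose relative latching maps are cofibrations (resp. acyclic cofibrations) lifts against pointwise acyclic fibrations (resp. pointwise fibrations): a partial lift defined below degree $n$ extends over each $x$ of degree $n$ by solving a single lifting problem in $\M$ of the relative latching map against the component of the right-hand map at $x$. Combined with the factorizations and the retract-closure of these ``relative latching'' classes, the standard retract argument (as in the proof of Proposition~\ref{prop:acyclicity-reduction}) identifies $\Cof$ and $\llp{\Fib}$ with the maps whose relative latching maps are, respectively, cofibrations and acyclic cofibrations, and shows that $(\Cof,\Fib\cap\WE)$ and $(\llp{\Fib},\Fib)$ are weak factorization systems.

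To finish via Proposition~\ref{prop:model-via-wfs} it remains to identify $\llp{\Fib}$ with $\Cof\cap\WE$. The inclusion $\Cof\cap\WE\subseteq\llp{\Fib}$ is the retract argument once more: factor such an $f$ as an acyclic cofibration followed by a pointwise fibration; the latter is a weak equivalence by $2$-of-$3$, so $f$ lifts against it and is a retract of the acyclic cofibration. For the reverse inclusion it suffices, by the factorization and retract argument, to show that every map whose relative latching maps are acyclic cofibrations of $\M$ is a pointwise weak equivalence, and I expect this to be the main obstacle. It rests on the standard Reedy lemma that the latching-object functor $X\mapsto L_xX$ carries such maps to acyclic cofibrations, which I would prove by a nested induction on degree; granting it, each component $f_x$ factors as the pushout of the acyclic cofibration $L_xX\to L_xY$ followed by the relative latching map at $x$, hence is a composite of two acyclic cofibrations --- using crucially that cobase changes of acyclic cofibrations are again acyclic cofibrations, so that no left properness hypothesis on $\M$ is required.
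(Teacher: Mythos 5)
The paper does not actually prove this proposition: it is stated as a classical result with a citation to \cite[5.1.3]{hovey}, and the argument you give is essentially Hovey's proof of that theorem. Your sketch is correct: the degree-by-degree construction of the two factorizations via relative latching maps, the retract argument identifying $\Cof$ and $\llp{\Fib}$ with the maps whose relative latching maps are cofibrations, respectively acyclic cofibrations, and the reduction of $\llp{\Fib}=\Cof\cap\WE$ to the claim that the latter class consists of pointwise weak equivalences all follow the standard treatment, and you are right that no properness hypothesis is needed, since pushouts of acyclic cofibrations are again acyclic cofibrations. The one step you defer --- that $L_x$ carries maps whose relative latching maps are acyclic cofibrations to acyclic cofibrations --- is indeed the only real content, and the nested induction you indicate does close up: the latching category at $x$ is itself a direct category all of whose objects have degree strictly less than $d(x)$, latching objects computed there agree with those computed in $\D$, and $\colim$ out of it is a left Quillen functor for the right-induced structure (its right adjoint, the constant-diagram functor, visibly preserves pointwise fibrations and pointwise acyclic fibrations). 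With that lemma supplied, your proof is complete.
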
}

In the right-induced model structures on diagrams indexed by a direct category, the fibrations and weak equivalences are defined objectwise, and in the left-induced model structures on diagrams indexed by an inverse category, the cofibrations and weak equivalences are defined objectwise. A Reedy category generalizes these notions. We recall the following definition, as formulated in \cite{hovey}.

\begin{defn} A \emph{Reedy category} is a small category $\R$ together with a \emph{degree function} $d: \ob \R \lra \mathbb{N}$ and two wide subcategories $\R^+$ and $\R^{-}$ satisfying the following axioms.
\begin{enumerate}
\item Non-identity morphisms of $\R^+$ {strictly} raise degree.
\item  Non-identity morphisms of $\R^{-}$ {strictly} lower degree.
\item Every morphism in $\R$ factors uniquely as a morphism in $\R^{-}$ followed by a morphism in $\R^+$.
\end{enumerate}
\end{defn}

{
\begin{ex} The categories $\Delta_+$ and $\Delta$ of finite, respectively finite non-empty, ordinals are Reedy categories. Opposites of Reedy categories and finite products of Reedy categories are again Reedy categories.
\end{ex}}

\begin{rmk} An inverse or a direct category is an example of a Reedy category. In the former case, all morphisms are degree-decreasing, and in the latter case all morphisms are degree-increasing. Conversely, if $\R$ is a Reedy category, then the subcategories $\R^+$ and $\R^-$ are respectively direct and inverse.
\end{rmk}

The following constructions play an essential role in the definition of the Reedy model structure.

\begin{defn} Let $\R$ be a Reedy category, let $r \in \ob\R$, and let $\C$ be a bicomplete category. 
The \emph{$r$-th latching object} of $\Phi \in \C^{\R}$ is 
$$L_r\Phi = \colim (\R^+_{<\mathrm{deg}(r)}/r \xrightarrow U\R \xrightarrow \Phi \C ),$$
and the  \emph{$r$-th matching object}  is 
$$M_r\Phi = \lim (r/\R^{-}_{<\mathrm{deg}(r)} \xrightarrow U \R \xrightarrow\Phi \C ),$$
where $\R^+_{<\mathrm{deg}(r)}/r$ is the slice category over $r$ with objects restricted to degree less than the degree of $r$ and morphisms in $\R^+$, $r/\R^{-}_{<\mathrm{deg}(r)}$ is defined dually, and $U$ denotes a forgetful functor from a slice category.
\end{defn}

\begin{rmk}\label{rmk:relative-matching} For every $r\in \ob \R $ and every morphism $\tau\colon  \Phi \lra \Psi \in \C^\R$, there are natural morphisms in $\C$: the \emph{relative latching map} 
$$\ell_{r}(\tau)\colon\Phi(r) \sqcup_{L_r\Phi} L_r\Psi \lra \Psi(r)$$
and the  \emph{relative matching map}
$$m_{r}(\tau)\colon\Phi (r) \lra M_r\Phi \times_{M_r\Psi} \Psi(r).$$
\end{rmk}

The following result, due to Kan based on work of  Reedy, defines the \emph{Reedy model structure.}

\begin{thm}[{\cite[Theorem 5.2.5]{hovey}}] Let $(\M, \Fib, \Cof, \WE)$ be a model category and $\R$  a Reedy category. Then $\M^{\R}$ can be equipped with a model structure  where a morphism $\tau\colon \Phi \lra \Psi$ is 
\begin{itemize}
\item a weak equivalence if and only if $\tau_r\in \WE$ for every $r\in \ob \R$;
\item a cofibration if and only if $\ell_{r}(\tau)\in \Cof$ for every $r\in \ob \R$; and
\item a fibration if and only if $m_{r}(\tau)\in \Fib$ for every $r\in \ob \R$.
\end{itemize}
\end{thm}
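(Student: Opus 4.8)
The plan is to recover this model structure from the adjoint-square machinery of Theorem \ref{thm:square}; this conceptual route requires $\M$ to be accessible, whereas for a general model category $\M$ the same conclusion follows from the degree induction sketched at the end, which is the classical Kan--Reedy argument \cite[5.2.5]{hovey}. First I would exhibit $\M^\R$ as a category of bialgebras on $\M^{\ob\R}$. Since $\ob\R^+ = \ob\R = \ob\R^-$, the restriction functors $\M^{\R^+} \to \M^{\ob\R}$ and $\M^{\R^-} \to \M^{\ob\R}$ are respectively monadic and comonadic, with left and right adjoints given by left and right Kan extension; write $\TT$ and $\KK$ for the induced monad and comonad, so that $\M^{\R^+} \cong \Alg^\TT(\M^{\ob\R})$ and $\M^{\R^-} \cong \Coalg_\KK(\M^{\ob\R})$. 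The unique factorization of every morphism of $\R$ as a map of $\R^-$ followed by a map of $\R^+$ (axiom (3)) supplies a distributive law $\TT\KK \Rightarrow \KK\TT$ and identifies $\Bialg_\KK^\TT(\M^{\ob\R})$ with $\M^\R$.

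Next I would apply Corollary \ref{cor:monad_comonad}. Because $\R^+$ is direct and $\R^-$ is inverse, the projective (right-induced) model structure exists on $\M^{\R^+} = \Alg^\TT(\M^{\ob\R})$ and the injective (left-induced) model structure exists on $\M^{\R^-} = \Coalg_\KK(\M^{\ob\R})$. When $\M$ is accessible, $\M^{\ob\R}$ is locally presentable and $\TT$, $\KK$ are accessible, and the commutativity hypotheses of Theorem \ref{thm:square} hold automatically for the canonical square attached to the distributive law. Corollary \ref{cor:monad_comonad} then yields both a left-induced and a right-induced (accessible) model structure on $\M^\R$, with the identity serving as a left Quillen equivalence from the right-induced structure to the left-induced one.

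It remains to identify the induced classes with the Reedy ones. In each structure the weak equivalences are created by restriction to $\ob\R$ and are therefore the objectwise weak equivalences. A map $\tau$ is a cofibration in the left-induced structure if and only if its restriction to $\R^+$ is a projective cofibration, i.e. if and only if each relative latching map $\ell_r(\tau)$ lies in $\Cof$; here the $\R^+$-latching object agrees with the Reedy latching object $L_r$ of the preceding definition. Dually, $\tau$ is a fibration in the right-induced structure if and only if each relative matching map $m_r(\tau)$ lies in $\Fib$. Thus the left-induced cofibrations are exactly the Reedy cofibrations, and the right-induced fibrations are exactly the Reedy fibrations.

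The hard part is to show that these two model structures coincide, so that the Reedy cofibrations and the Reedy fibrations belong to a single model structure; this is Proposition \ref{prop:reedy_m_s}. As the identity is left Quillen from the right-induced to the left-induced structure, every right-induced cofibration is already a Reedy cofibration, and it suffices to prove the reverse inclusion. I would establish it by the classical degree induction, whose key lemma is that a Reedy cofibration is an objectwise weak equivalence precisely when each of its relative latching maps is an acyclic cofibration in $\M$ (and dually for matching maps and fibrations). Granting this, one shows that any acyclic Reedy cofibration lifts against any Reedy fibration by climbing the degree filtration and solving the lifting problem one latching--matching square at a time using the model structure on $\M$; the same induction produces the required functorial factorizations, so that by Proposition \ref{prop:model-via-wfs} the triple (Reedy cofibrations, objectwise weak equivalences, Reedy fibrations) is a model structure, necessarily equal to both induced ones. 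This degree induction, tying objectwise weak equivalences to the relative latching and matching maps, is the only genuinely nontrivial ingredient; the rest is formal.
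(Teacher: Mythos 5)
Your outline is correct, but note first that the paper does not prove this theorem at all: it is quoted from \cite[Theorem 5.2.5]{hovey}, and the surrounding text (Lemmas \ref{lem:reedy-exact} and \ref{lem:reedy-bialgebra} and Proposition \ref{prop:reedy_m_s}) runs your argument in the opposite direction, taking the Reedy model structure as already established and then identifying it with the two induced structures supplied by Corollary \ref{cor:monad_comonad}. Your route is therefore genuinely different from the paper's, but the difference is largely cosmetic, because --- as you yourself observe --- the bialgebra machinery does not discharge any of the real work. It applies only when $\M$ is accessible, whereas the statement concerns an arbitrary model category; the commutation $LV\cong VL$ is not ``automatic'' but is precisely the coequalizer computation of Lemma \ref{lem:reedy-exact}; and even in the accessible case Corollary \ref{cor:monad_comonad} produces only two a priori distinct model structures related by a Quillen equivalence (Proposition \ref{prop:distinct} shows they can genuinely differ in other examples of the same shape). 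The entire content of the theorem --- that the Reedy cofibrations, the objectwise weak equivalences, and the Reedy fibrations assemble into a single model structure --- is carried by the classical degree induction you defer to at the end: the key lemma relating relative latching (resp.\ matching) maps that are acyclic cofibrations (resp.\ acyclic fibrations) to Reedy cofibrations (resp.\ fibrations) that are objectwise weak equivalences, and the inductive construction of lifts and factorizations, which you state but do not prove. That is an acceptable reduction to \cite[5.2.5]{hovey}, but once that induction is done, Proposition \ref{prop:model-via-wfs} yields the Reedy model structure directly, for every model category $\M$, with no mention of monads, comonads, or distributive laws; what the bialgebra picture buys is not the existence proof but the reinterpretation recorded in Proposition \ref{prop:reedy_m_s}.
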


We now explain how Reedy diagrams arise as bialgebras defined by combining diagrams indexed by the direct and inverse subcategories. We have a diagram
\[ \xymatrix@C=4pc@R=4pc{ \M^{\ob \R} \ar@{}[r]|{\perp} \ar@<-1ex>[d]_L \ar@<-1ex>[r]_{R} & \M^{\R^-}  \ar@<1ex>[d]^L \ar@<-1ex>[l]_V \\  \ar@{}[r]|{\top} \ar@{}[u]|{\dashv} \M^{\R^+} \ar@<1ex>[r]^{R} \ar@<-1ex>[u]_U & \M^{\R} \ar@<1ex>[u]^U \ar@<1ex>[l]^V\ar@{}[u]|{\vdash} }\] 
where $U,V$ are restriction functors, $L$ denotes the left Kan extension, and $R$ denotes the right Kan extension along the respective inclusions 
\[ \xymatrix{ \ob \R \ar@{^(->}[r] \ar@{^(->}[d] & \R^- \ar@{^(->}[d] \\ \R^+ \ar@{^(->}[r] & \R}\]

\begin{lem}\label{lem:reedy-exact} $LV \cong VL$ and $RU \cong UR$.
\end{lem}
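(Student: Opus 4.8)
The plan is to reduce the two claimed isomorphisms to a single one and then establish it by a pointwise computation that turns the Reedy factorization axiom into a decomposition of a comma category. Write $i\colon \ob\R \hookrightarrow \R^-$, $j\colon \ob\R \hookrightarrow \R^+$, $k\colon \R^- \hookrightarrow \R$ and $l\colon \R^+\hookrightarrow \R$ for the four inclusions, so that $ki = lj$ is the inclusion $\ob\R\hookrightarrow\R$. Unravelling the diagram, the two composites in question are functors $\M^{\R^-}\to\M^{\R^+}$ given by $LV = \Lan_j\,\res_i$ and $VL = \res_l\,\Lan_k$ (all pointwise Kan extensions exist since $\M$ is bicomplete). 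My first step is to note that the two statements are \emph{mates}: since $\res_i\dashv\Ran_i$ and $\Lan_j\dashv\res_j$, the composite $LV=\Lan_j\,\res_i$ is left adjoint to $\Ran_i\,\res_j = RU$, and similarly $VL=\res_l\,\Lan_k$ is left adjoint to $\res_k\,\Ran_l = UR$. By uniqueness of adjoints, a natural isomorphism $LV\cong VL$ transposes to a natural isomorphism $RU\cong UR$, so it suffices to prove $LV\cong VL$.

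Next I would introduce the canonical Beck--Chevalley comparison $\Lan_j\,\res_i \Rightarrow \res_l\,\Lan_k$ attached to the square $ki=lj$, namely the transpose across $\Lan_j\dashv\res_j$ of the composite
\[ \res_i X \xrightarrow{\ \res_i\eta\ } \res_{ki}\Lan_k X = \res_{lj}\Lan_k X = \res_j\,\res_l\,\Lan_k X, \]
where $\eta$ is the unit of $\Lan_k\dashv\res_k$. The goal is then to check that this map is invertible after evaluating at each $r\in\R^+$. On the source side the computation is immediate: because $\ob\R$ is discrete, the slice category $j/r$ is discrete, with objects exactly the $\R^+$-morphisms into $r$, so the pointwise left Kan extension formula gives
\[ (LV\,X)(r) \;\cong\; \coprod_{\alpha\colon a\to r\ \in\ \R^+} X(a). \]

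The heart of the argument — and the step I expect to be the main obstacle — is the matching computation of
\[ (VL\,X)(r) = (\Lan_k X)(r) \cong \colim_{(b,\,g)\in\,\R^-/r} X(b), \]
where the comma category $\R^-/r$ has objects all morphisms $g\colon b\to r$ of $\R$ and, as morphisms $(b,g)\to(b',g')$, the $\R^-$-morphisms $h$ with $g'h=g$. Here I would invoke unique Reedy factorization $g=g^{+}g^{-}$, with $g^{-}\in\R^-$ and $g^{+}\in\R^+$: sending $(b,g)\mapsto g^{+}$ defines a functor $\R^-/r\to \R^+/r$ to the \emph{discrete} category of $\R^+$-morphisms into $r$. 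The key point, which uses that $\R^-$ is closed under composition together with uniqueness of factorizations, is that every morphism of $\R^-/r$ preserves the $\R^+$-part; hence $\R^-/r$ splits as the disjoint union over $\phi\colon c\to r\in\R^+$ of its fibres $\mathcal C_\phi$. A second application of uniqueness shows that $(c,\phi)$, with $c=\mathrm{dom}\,\phi$, is a \emph{terminal} object of $\mathcal C_\phi$, so the inclusion of this object is final and the colimit over $\mathcal C_\phi$ is just $X(c)$. This yields
\[ (VL\,X)(r)\;\cong\;\coprod_{\phi\colon c\to r\ \in\ \R^+} X(c), \]
which agrees summand-for-summand with $(LV\,X)(r)$. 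Finally I would verify that the Beck--Chevalley comparison is precisely this summand bijection — the identity on each copy of $X(a)$ — so that it is an isomorphism, natural in $r\in\R^+$ and in $X$. This gives $LV\cong VL$, and the mate argument of the first paragraph then delivers $RU\cong UR$.
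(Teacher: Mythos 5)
Your proof is correct and follows essentially the same route as the paper's: reduce to the single isomorphism $LV\cong VL$ by adjunction/duality, then compute both sides pointwise at $r\in\R^+$ and use unique Reedy factorization to identify the colimit formula for $\Lan_k$ with the coproduct $\coprod_{\R^+(x,r)}\Phi(x)$. The paper phrases the key step via the coequalizer presentation of the pointwise left Kan extension rather than your decomposition of the comma category into fibres with terminal objects, but these are the same computation; your version just spells out more explicitly why the comparison map is invertible.
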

\begin{proof}
The two assertions are equivalent, and dual. Given a diagram $\Phi \in \M^{\R^-}$, $LV(\Phi) \in \M^{\R+}$ is the diagram defined at $r \in \R^+$ by
\[ LV(\Phi)(r) := \coprod_{x, \R^+(x,r)} \Phi (x).\] Employing the coequalizer formula for pointwise left Kan extensions, we see that this is isomorphic to $VL(\Phi)$ on account of the following coequalizer diagram
\[ \xymatrix{ \coprod_{x,y, \R^-(x,y) \times \R(y,r)} \Phi(x) \ar@<.5ex>[r] \ar@<-.5ex>[r] & \coprod_{x,\R(x,r)} \Phi(x) \ar[r] & \coprod_{x, \R^+(z,r)} \Phi (z),}\] where the quotient map is defined on the component indexed by a morphism $x \lra r$ by taking the Reedy factorization $x \twoheadrightarrow z \rightarrowtail r$, applying $\Phi$ to the left factor, and injecting into the component indexed by the right factor.
\end{proof}

The key observation relating Reedy diagrams to the framework of section \ref{section:adjoint_squares} is the following.

\begin{lem}\label{lem:reedy-bialgebra}  If $\R$ is a Reedy category, and $\M$ is any bicomplete category, then  $\M^\R$ is the category of bialgebras for the monad $\TT$ and comonad $\KK$ on $\M^{\ob\R}$ induced by the adjunctions
\[ \xymatrix{ \M^{\R^+} \ar@<-1ex>[r]_U & \M^{\ob\R} \ar@<-1ex>[l]_L \ar@{}[l]|\perp \ar@{}[r]|\perp \ar@<-1ex>[r]_R & \M^{\R^-} \ar@<-1ex>[l]_V}\]
\end{lem}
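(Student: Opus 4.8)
The plan is to proceed in three stages: identify the algebra and coalgebra categories, produce the distributive law, and then exhibit the isomorphism with $\M^\R$. First I would observe that because the inclusion $\ob\R \hookrightarrow \R^+$ is the identity on objects and $\M$ is bicomplete, the monad $\TT$ has underlying functor $T = UL$ given by $(TX)_r \cong \coprod_{x,\,\R^+(x,r)} X_x$, so that a $\TT$-algebra structure on $X \in \M^{\ob\R}$ amounts to a map $X_x \to X_r$ for each morphism of $\R^+(x,r)$, the unit and associativity axioms saying exactly that these assignments are unital and functorial. Thus $\Alg^\TT(\M^{\ob\R}) \cong \M^{\R^+}$, with the forgetful functor corresponding to restriction $U$ (equivalently, $U$ is monadic). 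Dually, $(KX)_r \cong \prod_{y,\,\R^-(r,y)} X_y$, and a $\KK$-coalgebra is precisely a functor $\R^- \to \M$, so $\Coalg_\KK(\M^{\ob\R}) \cong \M^{\R^-}$ over restriction $V$.

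Next I would construct the distributive law $\chi \colon TK \Rightarrow KT$ directly from the third Reedy axiom. Given $f \in \R^+(x,r)$ and $g \in \R^-(r,z)$, the composite $gf \in \R(x,z)$ has a unique factorization $x \xrightarrow{p} c \xrightarrow{q} z$ with $p \in \R^-$ and $q \in \R^+$; the rewriting rule $(f,g) \mapsto (p,q)$ assembles, after passing to the relevant coproducts and products of the $X_y$, into a natural transformation $\chi \colon TK \Rightarrow KT$. The two axioms of \eqref{eq:distributive-law} then reduce to associativity of composition in $\R$ together with uniqueness of factorizations: refactoring a longer mixed zig-zag in the two prescribed orders yields the same $\R^-$-then-$\R^+$ form. (Equivalently, the commutations $LV \cong VL$ and $UR \cong RU$ of Lemma~\ref{lem:reedy-exact} are the mates witnessing this $\chi$.) With $\chi$ in hand, $\Bialg_\KK^\TT(\M^{\ob\R})$ is defined, and an object of it is an $\ob\R$-family carrying both an $\R^+$- and an $\R^-$-diagram structure subject to the compatibility imposed by $\chi$.

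Finally I would exhibit the isomorphism $\M^\R \cong \Bialg_\KK^\TT(\M^{\ob\R})$. In one direction, a Reedy diagram $\Phi$ restricts to the $\R^+$-diagram $U\Phi$ and the $\R^-$-diagram $V\Phi$ on a common underlying $\ob\R$-family; these are the algebra and coalgebra structures of a bialgebra, the $\chi$-compatibility being nothing but functoriality of $\Phi$ applied to composites $gf$ with $f \in \R^+$ and $g \in \R^-$. In the other direction, a bialgebra supplies compatible $\R^+$- and $\R^-$-diagrams agreeing on objects, and I would define $\Phi$ on an arbitrary $f \in \R$ by $\Phi(f) := \Phi(f^+)\circ \Phi(f^-)$, where $f = f^+ f^-$ is the unique Reedy factorization. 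These two assignments are inverse on objects and on morphisms, and each is evidently functorial in its argument.

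The main obstacle is the single genuinely non-formal verification: that the reconstructed $\Phi$ is a functor, i.e.\ $\Phi(g)\Phi(f) = \Phi(gf)$ for composable $f,g \in \R$. Expanding the left-hand side via the factorizations $f = f^+f^-$ and $g = g^+g^-$ forces one to refactor the mixed composite $g^- f^+$ (an $\R^+$-morphism followed by an $\R^-$-morphism) as an $\R^-$-morphism followed by an $\R^+$-morphism, and checking that the resulting value matches the factorization of $gf$ is exactly the bialgebra compatibility condition encoded by $\chi$. This is precisely where uniqueness of Reedy factorizations and the distributive-law axioms are consumed; every remaining check is a direct unwinding of definitions.
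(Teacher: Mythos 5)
Your proposal is correct and follows essentially the same route as the paper: the paper's proof consists precisely of writing down the distributive law $\chi \colon TK \Rightarrow KT$ componentwise via the unique Reedy factorization of a composite $gf$ with $f \in \R^+$ and $g \in \R^-$ (project at the $\R^-$ factor, include at the $\R^+$ factor), exactly as you do. You additionally spell out the routine identifications $\Alg^{\TT} \cong \M^{\R^+}$, $\Coalg_{\KK} \cong \M^{\R^-}$, and the reconstruction $\Phi(f) = \Phi(f^+)\circ\Phi(f^-)$, which the paper leaves implicit.
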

\begin{proof}
The component of the distributive law $TK \Rightarrow KT$ at $\Phi \in \M^{\ob\R}$ is given at $r \in \ob\R$ by the map
\[ \xymatrix{\coprod\limits_{x,\R^+(x,r)} (\prod\limits_{y,\R^-(x,y)}\Phi(y)) \ar[rr]^{\chi_{\Phi,r}} \ar[dr]_{\mathrm{proj}_q} & & \prod\limits_{y,\R^-(r,y)}  (\coprod\limits_{x, \R^+(x,y)} \Phi(x)) \\ & \Phi(z) \ar[ur]_{\mathrm{incl}_i}} \] defined on the components indexed by $f \colon x \rightarrowtail r$ and $g \colon r \twoheadrightarrow y$ by forming the Reedy factorization $x \stackrel{q}{\twoheadrightarrow} z \stackrel{i}{\rightarrowtail} y$ of the composite $gf$, projecting to the component indexed by $q \in \R^-(x,z)$, and injecting via the component indexed by $i \in \R^+(z,y)$.
\end{proof}

By the definition of the latching and matching objects, the Reedy model structure on $\M^\R$ is left-induced from the right-induced model structure on $\M^{\R^+}$ and also right-induced from the left-induced model structure on $\M^{\R^-}$. { Lemmas \ref{lem:reedy-exact} and \ref{lem:reedy-bialgebra} combine to tell us that the Reedy model structure on $\M^\R$ can be seen as a special case of the model structures on bialgebras constructed in Corollary \ref{cor:monad_comonad}, a fact we record in the following proposition.

\begin{prop}\label{prop:reedy_m_s}
Reedy diagrams are bialgebras with respect to the monad on $\M^{\ob\R}$ induced by the direct subcategory $\R^+\subset \R$ and the comonad on $\M^{\ob\R}$ induced by the inverse subcategory $\R^-\subset\R$. The Reedy model structure on $\M^\R$ is left-induced from the projective model structure on $\M^{\R^+}$ and right-induced from the injective model structure on $\M^{\R^-}$.
\end{prop}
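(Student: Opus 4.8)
The first assertion is precisely the content of Lemma \ref{lem:reedy-bialgebra}, so the plan is to spend the argument on the second assertion, concerning the Reedy model structure. The overall strategy is to match up three model structures: the projective structure on $\M^{\R^+}$, the injective structure on $\M^{\R^-}$, and the Reedy structure on $\M^\R$, in each case by recognizing it as an induced structure.

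First I would identify the two auxiliary structures with induced ones. Since $\R^+$ is a direct category and $\R^-$ an inverse category, the classical existence result recalled at the start of this subsection shows that the projective model structure on $\M^{\R^+}$ is exactly the structure right-induced along $U\colon \M^{\R^+}\lra\M^{\ob\R}$, and dually that the injective model structure on $\M^{\R^-}$ is the structure left-induced along $V\colon \M^{\R^-}\lra\M^{\ob\R}$; in either case fibrations, respectively cofibrations, together with weak equivalences are created pointwise. The crux is then to show that the Reedy structure on $\M^\R$ is created by the restriction functor $V\colon \M^\R\lra\M^{\R^+}$ out of the projective structure, and dually created by $U\colon \M^\R\lra\M^{\R^-}$ out of the injective structure. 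For the left-induction claim I would argue as follows: weak equivalences in both the Reedy and the projective structures are defined objectwise, and since $\R^+$ contains every object of $\R$, a morphism $\tau$ of $\M^\R$ is a Reedy weak equivalence if and only if $V\tau$ is a projective weak equivalence. For cofibrations I would invoke the standard characterization of projective cofibrations over a direct category: a morphism of $\M^{\R^+}$ is a projective cofibration exactly when each of its relative latching maps lies in $\Cof$. The latching object $L_r\Phi$ is computed over $\R^+_{<\mathrm{deg}(r)}/r$ and so depends only on the restriction $V\Phi$; hence the relative latching map $\ell_r(\tau)$ agrees with that of $V\tau$, and $\tau$ is a Reedy cofibration if and only if $V\tau$ is a projective cofibration. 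These two identifications say exactly that cofibrations and weak equivalences of the Reedy structure are created by $V$, i.e. that the Reedy structure is left-induced from the projective structure on $\M^{\R^+}$. The right-induction claim via $U$ and the matching objects $M_r\Phi$ is formally dual, using the relative matching maps $m_r(\tau)$.

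The step I expect to carry the real weight is this cofibration comparison, since it rests both on the relative-latching-map characterization of projective cofibrations for a direct category and on the observation that latching objects agree whether computed in $\R$ or in $\R^+$; the matching-object statement is the analogous point for fibrations. Neither is deep, but they are where the Reedy combinatorics actually enter, and I would take care that the weak equivalences are genuinely created (not merely reflected) by the restriction functors, which follows because $\R^+$ and $\R^-$ are wide subcategories.

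Finally, to exhibit the whole picture as an instance of the bialgebra constructions of Section \ref{section:adjoint_squares}, I would combine Lemma \ref{lem:reedy-bialgebra}, which supplies both the bialgebra identification and the distributive law $TK\Rightarrow KT$, with Lemma \ref{lem:reedy-exact}, which supplies the compatibility $LV\cong VL$ (equivalently $RU\cong UR$). These are precisely the hypotheses of Corollary \ref{cor:monad_comonad} and of Theorem \ref{thm:square}; so when $\M$ is accessible, the accessible monad and comonad above yield the Reedy structure on $\M^\R$ as a model structure on bialgebras, with the identity serving as a left Quillen equivalence between the left-induced and right-induced presentations. The elementary identification of the preceding paragraphs, by contrast, requires only that $\M$ be a model category.
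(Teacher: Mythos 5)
Your proposal is correct and follows essentially the same route as the paper's proof: identify the Reedy weak equivalences as the objectwise ones created by restriction, and use the relative latching (resp.\ matching) map characterization of cofibrations in the projective structure on the direct category $\R^+$ (resp.\ fibrations in the injective structure on the inverse category $\R^-$), together with the observation that latching and matching objects depend only on the restrictions to $\R^+$ and $\R^-$. The only cosmetic difference is that the paper spells out the fibration/matching side of the argument and asserts the dual, whereas you spell out the cofibration/latching side and assert the dual.
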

\begin{proof}
Recall that in the right-induced model structure on $\M^\R$, the restriction functor $U$ creates weak equivalences and fibrations, i.e.,  weak equivalences are objectwise, and fibrations are the maps that after restricting to $\M^{\R^-}$ are fibrations with respect to left-induced model structure on $\M^{\R^-}$. Since weak equivalences and cofibrations are objectwise in $\M^{\R^-}$, and ${\R^-}$ is a inverse category, the fibrations are the natural transformations $\psi$ such that all relative matching maps are fibrations in $\M$. The dual argument shows that the model structure on $\M^\R$ that is left-induced from the right-induced model structure on $\M^{\R^+}$ is also a Reedy model structure.
\end{proof}}

\subsection{Generalized Reedy diagrams as bialgebras}\label{ssec:generalized}

The results in the previous section can be extended to diagrams indexed by generalized Reedy categories, introduced in \cite{berger-moerdijk}. A (classical) Reedy category has no non-identity automorphisms; the idea of this generalization is to allow non-trivial automorphisms. 

\begin{defn} A \emph{generalized Reedy category} is a small category $\R$ together with a degree function $d: \ob \R \lra \mathbb{N}$ and two wide subcategories $\R^+$ and $\R^{-}$ satisfying the following axioms.
\begin{itemize}
\item Non-{invertible morphisms of $\R^+$ strictly} raise degree.
\item  Non-{invertible morphisms of $\R^{-}$ strictly} lower degree.
\item Isomorphisms in $\R$ preserve degree.
\item $\R^+ \bigcap \R^{-} = \Iso(\R)$, the subcategory of isomorphisms.
\item Every morphism in $\R$ factors uniquely up to isomorphism as a morphism in $\R^{-}$ followed by a morphism in $\R^+$.
\item If {$\theta$ is an isomorphism such that} $\theta f = f$ for  {all} $f \in \R^{-}$, then $\theta$ is an identity.
\end{itemize}
A \emph{dualizable} generalized Reedy category is a generalized Reedy category satisfying the following additional axiom.
\begin{itemize}
\item If {$\theta$ is an isomorphism such that} $\theta f = f$ for  {all} $f \in \R^{+}$, then $\theta$ is an identity.
\end{itemize}
\end{defn}

 If $\R$ is a dualizable generalized Reedy category, then $\R^\op$ is as well.  Most known examples of generalized Reedy categories are dualizable.

\begin{exs} Examples of dualizable generalized Reedy categories include all groupoids, the category of finite sets, orbit categories of finite groups, and the tree category $\Omega$, presheaves on which is the category of dendroidal sets.
\end{exs}

\begin{defn} Let $\R$ be a generalized Reedy category.  A model category $(\M, \Fib, \Cof, \WE)$ is  \emph{$\R$-projective}  if $\M^{\mathrm{Aut}(r)}$ admits the projective model structure for every object $r \in \R$. 
\end{defn}

If $(\M, \Fib, \Cof, \WE)$ is cofibrantly generated and permits the small object argument, then it is $\R$-projective for any generalized Reedy category $\R$ by \cite[11.6.1]{hirschhorn}.

\begin{thm}[{\cite[Theorem 1.6]{berger-moerdijk}}]\label{modelstructure} 
If $(\M, \Fib, \Cof, \WE)$ is an $\R$-projective model category, and $\R$ is a generalized Reedy category, then  $\M^{\R}$ admits a model structure where a map $\tau\colon  \Phi \lra \Psi$ is a
\begin{itemize}
\item weak equivalence if and only if for every $r \in \ob\R$, $\tau_r: \Phi(r) \lra \Psi(r)$ is a weak equivalence in the projective model structure on $\M^{\mathrm{Aut}(r)}$;
\item cofibration if and only if for every $r \in \ob\R$, the relative latching map $\ell_{r}(\tau)$ is a cofibration in the projective model structure on  $\M^{\mathrm{Aut}(r)}$;
\item fibration if and only if for every $r \in \ob\R$, the relative matching map $m_{r}(\tau)$ is a fibration in the projective model structure on   $\M^{\mathrm{Aut}(r)}$.
\end{itemize}
\end{thm}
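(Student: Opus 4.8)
The plan is to mimic the strategy of Subsection \ref{ssec:reedy}, replacing the ``discrete'' base $\M^{\ob\R}$ by the groupoid-indexed diagram category $\M^{\Iso(\R)}$, which absorbs the automorphisms that distinguish the generalized setting from the classical one. Since $\Iso(\R)$ is a groupoid, $\M^{\Iso(\R)} \cong \prod_{[r]}\M^{\mathrm{Aut}(r)}$, a product over isomorphism classes of objects, and the $\R$-projective hypothesis guarantees precisely that each factor $\M^{\mathrm{Aut}(r)}$, and hence the product, carries a projective model structure. Working in the accessible setting (so that the induction machinery applies and each diagram category is locally presentable), I would first record that this product projective structure is accessible, being assembled from the accessible structure on $\M$, and then set up the square of restriction and Kan-extension adjunctions
\[ \xymatrix@C=3pc@R=3pc{ \M^{\Iso(\R)} \ar@{}[r]|{\perp} \ar@<-1ex>[d]_L \ar@<-1ex>[r]_{R} & \M^{\R^-}  \ar@<1ex>[d]^L \ar@<-1ex>[l]_V \\ \ar@{}[r]|{\top} \ar@{}[u]|{\dashv} \M^{\R^+} \ar@<1ex>[r]^{R} \ar@<-1ex>[u]_U & \M^{\R} \ar@<1ex>[u]^U \ar@<1ex>[l]^V\ar@{}[u]|{\vdash} }\]
induced by the inclusions of $\Iso(\R)$ into $\R^+$ and $\R^-$ and of these into $\R$, exactly as in the classical case but along the maximal subgroupoid rather than the discrete set of objects. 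The monad $\TT$ generated by $U \dashv L$ and the comonad $\KK$ generated by $R \dashv V$ on $\M^{\Iso(\R)}$ are accessible, being Kan extensions along functors of small categories.

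The two technical lemmas of the previous subsection then need to be upgraded. First, a generalized analogue of Lemma \ref{lem:reedy-bialgebra}: I would exhibit $\M^\R$ as $\Bialg_\KK^\TT(\M^{\Iso(\R)})$, with the distributive law $TK \Rightarrow KT$ assembled from the generalized Reedy factorization. The essential new point is that this factorization is unique only up to isomorphism; working over $\Iso(\R)$ rather than $\ob\R$ is exactly what makes the relevant coproducts over $\R^+(x,r)$ and products over $\R^-(r,y)$ well-defined equivariantly, so that the automorphism ambiguity in a chosen factorization cancels. Second, a generalized analogue of Lemma \ref{lem:reedy-exact}: the Beck--Chevalley isomorphisms $LV \cong VL$ and $RU \cong UR$, proved by the same coequalizer computation, now indexed by morphisms of $\R$ modulo the groupoid action.

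With these in hand, I would verify the hypotheses of Corollary \ref{cor:monad_comonad}: the right-induced model structure on $\M^{\R^+} = \Alg^\TT(\M^{\Iso(\R)})$ and the left-induced model structure on $\M^{\R^-} = \Coalg_\KK(\M^{\Iso(\R)})$ both exist. These are the ``direct'' and ``inverse'' generalized-Reedy cases, and their existence follows from Corollary \ref{cor:acyclicity-reduction} together with the equivariant projective input: the relevant acyclicity conditions hold because the latching (resp.\ matching) construction along a degree-raising (resp.\ degree-lowering) functor is built from the already-established projective structures on the $\M^{\mathrm{Aut}(r)}$. Corollary \ref{cor:monad_comonad} then produces both a right-induced and a left-induced model structure on $\M^\R$, which I would identify with the Berger--Moerdijk structure exactly as in Proposition \ref{prop:reedy_m_s}: unwinding the right-induced fibrations along $U$ and the left-induced cofibrations along $V$ recovers the relative matching-map and relative latching-map conditions, now tested in the projective model structures on the $\M^{\mathrm{Aut}(r)}$; since the weak equivalences are objectwise (equivariantly) throughout, Proposition \ref{prop:model-via-wfs} shows the two structures coincide and agree with the asserted one.

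The main obstacle I anticipate is the automorphism bookkeeping in the two lemmas, in particular verifying that the bialgebra identification and the distributive law are well-defined given that the generalized Reedy factorization is unique only up to non-identity isomorphism. The axioms stipulating that $\theta f = f$ for all $f \in \R^-$ forces $\theta = \id$ (and, in the dualizable case, its mirror image for $\R^+$) are exactly what is needed to rigidify these choices, and threading them correctly through the coequalizer formula for the pointwise Kan extensions is the delicate part. Once this is done, the verification that the left- and right-induced structures coincide with the Berger--Moerdijk structure is routine, following the classical template of Proposition \ref{prop:reedy_m_s}.
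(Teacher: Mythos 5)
First, note that the paper does not actually prove this statement: it is imported verbatim from Berger--Moerdijk, whose own argument is a direct verification of the factorization and lifting axioms by induction on degree. The material in \S\ref{ssec:generalized} then uses the already-established existence of this model structure to reinterpret it via the bialgebra formalism, not the other way around. Your proposal runs the logic in the opposite direction, and in doing so it hits two genuine gaps. The lesser one is scope: the theorem is asserted for an arbitrary $\R$-projective model category $\M$, whereas your argument needs $\M$ to be accessible (locally presentable, with accessible weak factorization systems) for Corollary \ref{cor:monad_comonad} and the induction machinery to apply, so at best you would prove a restricted version of the statement.

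The more serious gap is the step you call ``routine.'' Theorem \ref{thm:square} produces a right-induced and a left-induced model structure on $\M^\R$ related by a Quillen equivalence through the identity, but these need \emph{not} coincide: the paper warns of exactly this after Theorem \ref{thm:square} and exhibits a counterexample in Proposition \ref{prop:distinct}. What the machinery gives you for free is that the right-induced structure has objectwise weak equivalences and fibrations detected by relative matching maps, while the left-induced structure has objectwise weak equivalences and cofibrations detected by relative latching maps. To conclude that either one is the Berger--Moerdijk structure --- equivalently, that the two coincide --- you must show that the cofibrations of the right-induced structure are precisely the maps whose relative latching maps are projective cofibrations in each $\M^{\mathrm{Aut}(r)}$ (or dually for the fibrations of the left-induced structure). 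That is exactly the central lifting lemma of (generalized) Reedy theory, and it appears nowhere in your argument; Proposition \ref{prop:model-via-wfs} cannot supply it, and Proposition \ref{prop:reedy_m_s} obtains the analogous identification in the classical case only by comparing against a Reedy model structure whose existence is already known. Without an independent proof of that lemma your argument is circular at its crux. (By contrast, the equivariant bookkeeping you flag as the main obstacle --- the distributive law and the isomorphisms $LV\cong VL$, $RU\cong UR$ over $\Iso(\R)$ --- is genuinely delicate but is handled by Lemma \ref{lem:gen-reedy-exact} and is not where the real difficulty lies.)
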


We have, as before, an interesting description of a model structure on $\M^\R$ where $\R$ is a generalized Reedy category and $\M$ is locally presentable. 
We start with a diagram
\[ \xymatrix@R=4pc@C=4pc{\M^{\ob\R} \ar@<1ex>[r]^{L}  \ar@{}[r]|\perp & \M^{\Iso(\R)} \ar@{}[r]|{\perp} \ar@<1ex>[l]^U  \ar@<-1ex>[d]_L \ar@<-1ex>[r]_{R} & \M^{\R^-}  \ar@<1ex>[d]^L \ar@<-1ex>[l]_V \\  & \ar@{}[r]|{\top} \ar@{}[u]|{\dashv} \M^{\R^+} \ar@<1ex>[r]^{R} \ar@<-1ex>[u]_U & \M^{\R} \ar@<1ex>[u]^U \ar@<1ex>[l]^V\ar@{}[u]|{\vdash} }\] 
where  $U,V$ are restriction functors, and $L,R$ are left and right Kan extensions respectively. Here, $\Iso(\R)$ denotes the underlying groupoid of $\R$, i.e., a wide subcategory consisting of all isomorphisms. 

Equip $\M^{\Iso(R)}$ with the projective model structure, created by the forgetful functor to $\M^{\ob\R}$. {If $\M$ is an $\R$-projective model category, then  $\M^{\Iso(\R)}$ admits the projective model structure.}
 We will show that the restriction functors $U,V$ create the projective generalized Reedy model structures on $\M^\R$ in each case. 

{
\begin{lem}\label{lem:gen-reedy-exact} 
$LV \cong VL$ and $RU \cong UR$.
\end{lem}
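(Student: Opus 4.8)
The plan is to view both statements as the Beck--Chevalley (exactness) condition for the commutative square of wide inclusions
\[\xymatrix@C=3pc@R=2.5pc{ \Iso(\R) \ar@{^(->}[r]^-{i} \ar@{^(->}[d]_-{j} & \R^- \ar@{^(->}[d]^-{k} \\ \R^+ \ar@{^(->}[r]_-{l} & \R,}\]
in which $lj=ki$ is the inclusion $\Iso(\R)\hookrightarrow\R$. Writing $(-)^*$ for restriction and $(-)_!,(-)_*$ for left and right Kan extension, the functors in the statement are $V=i^*$, $L=j_!$, $L=k_!$, $V=l^*$ on one side and $U=j^*$, $R=i_*$, $R=l_*$, $U=k^*$ on the other, so that $LV=j_!i^*$, $VL=l^*k_!$, $RU=i_*j^*$, and $UR=k^*l_*$. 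The claim $LV\cong VL$ is precisely that the canonical mate $j_!i^*\Rightarrow l^*k_!$ of the identity $2$-cell $lj=ki$ is invertible, and $RU\cong UR$ is the dual assertion $k^*l_*\Rightarrow i_*j^*$ for right Kan extensions. I would treat the two as formally dual and prove the left-handed one in detail.

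Since $\M$ is bicomplete, all four Kan extensions are computed pointwise, and I would evaluate the mate at $\Phi\in\M^{\R^-}$ and $r\in\ob\R$. There $LV(\Phi)(r)$ is the colimit of $\Phi$ over the comma category $j\downarrow r$, whose objects are pairs $(z,f)$ with $f\colon z\to r$ in $\R^+$ and whose morphisms are isomorphisms of $\Iso(\R)$ commuting with the maps to $r$; while $VL(\Phi)(r)$ is the colimit of $\Phi$ over $k\downarrow r$, whose objects are pairs $(x,g)$ with $g\colon x\to r$ in $\R$ and whose morphisms lie in $\R^-$. The mate is exactly the comparison induced by the evident functor $F\colon (j\downarrow r)\to(k\downarrow r)$, $(z,f)\mapsto(z,f)$. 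By the standard cofinality theorem, this comparison is an isomorphism for every $\Phi$ as soon as $F$ is \emph{final}, i.e. as soon as for each object $(x,g)$ of $k\downarrow r$ the comma category $(x,g)\downarrow F$ is nonempty and connected.

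The heart of the argument --- and the one place where the generalized Reedy axioms enter --- is the identification of $(x,g)\downarrow F$ with the category of factorizations of $g\colon x\to r$ as a morphism $x\to z$ in $\R^-$ followed by a morphism $z\to r$ in $\R^+$, with morphisms the isomorphisms $z\to z'$ commuting with both factors. The generalized Reedy factorization axiom says exactly that such a factorization exists and is unique up to a compatible isomorphism, i.e. that $(x,g)\downarrow F$ is a nonempty connected groupoid; this is the genuinely new point relative to Lemma~\ref{lem:reedy-exact}, where $\Iso(\R)$ is discrete and the factorization category is a single point. Hence $F$ is final, the mate is invertible, and $LV\cong VL$. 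For $RU\cong UR$ I would run the formally dual argument: the mate $k^*l_*\Rightarrow i_*j^*$ is evaluated via pointwise limits, and its invertibility reduces by the dual (initiality) cofinality theorem to the same factorization comma categories being nonempty and connected, which again follows from the factorization axiom. The main obstacle is thus careful bookkeeping: correctly identifying the comma categories and verifying that ``uniqueness up to isomorphism'' delivers \emph{connectedness}, not merely a bijection on isomorphism classes --- here the degree and isomorphism axioms of a generalized Reedy category are what guarantee the factorization is unique up to a genuine (indeed unique) such isomorphism.
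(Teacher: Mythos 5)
Your proof is correct and is essentially the paper's argument: both reduce to the fact that generalized Reedy factorizations exist and are unique up to isomorphism, so that the relevant category of factorizations (your comma category $(x,g)\downarrow F$) is nonempty and connected. You package the comparison as a cofinality statement for the functor between comma categories, whereas the paper writes out the coend/coequalizer formulas for the pointwise Kan extensions directly and invokes the same factorization axiom, but the mathematical content is identical.
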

\begin{proof}
Again, the assertions are equivalent, and dual. For $X \in \M^{\R^-}$ we have
\[  L V X(d) = \int^{x \in \Iso(\R)} \coprod_{\R^+(x,d)} X(x)\] 
and
\[ V L X(d) = \int^{x \in \R^-} \coprod_{\R(x,d)} X(x) = \mathrm{coeq} ( \coprod_{\R^-(y,x) \times \R(x,d)} X(y) \rightrightarrows \coprod_{\R(y,d)} X(y))\] As in the proof of Lemma \ref{lem:reedy-exact}, we see again these are equal by the fact that any morphism has a Reedy factorization (through an object of minimal degree) and any two such are connected by an isomorphism between the minimal degree objects. 
\end{proof}

The categories $\R^+$ and $\R^-$ are special cases of generalized Reedy categories with no degree-decreasing and no degree-increasing morphisms, respectively. As in the previous section, the projective generalized Reedy model structures on $\M^{\R^+}$ and $\M^{\R^-}$ are, respectively, right- and left-induced from the projective model structure on $M^{\Iso(\R)}$, and the projective generalized Reedy model structure on $\M^\R$ is simultaneously left-induced from the former and right-induced from the latter. As in Proposition \ref{prop:reedy_m_s}, we can understand generalized Reedy diagrams as ``bialgebras'' for the monad associated to $\R^+$ and the comonad associated to $\R^-$.}

There is a dual version of the model structure of Theorem \ref{modelstructure} that begins with the injective model structure on an automorphism category of any object in $\R$. In this case we can dualize the argument above, starting with the injective model structure on $\M^{\Iso(\R)}$. Note that the Reedy model structure is a special case of the generalized Reedy model structure. For an ordinary Reedy category $\R$, $\Iso(\R)$ is discrete and so the projective and injective model structures on $\M^{\Iso(\R)}$ reduce to the pointwise-defined model structure on $\M^{\ob(\R)}$.

\subsection{Reedy subcategories and exact squares}\label{ssec:exact}

{Formal facts about homotopy limits and colimits, computed in a model category or even in more general contexts, follow from the axioms of a \emph{derivator}, in the terminology of Grothendieck, called simply a \emph{homotopy theory} by Heller \cite{heller}. Central to this axiomatic framework is the notion of \emph{exact square}. Lemmas \ref{lem:reedy-exact} and \ref{lem:gen-reedy-exact} show that the square of inclusions of the direct and inverse subcategories is an exact square, a fact we record here for future reference.

\begin{defn}\label{def:exact} A diagram of small categories
\[ \xymatrix@R=3pc@C=3pc{ \A \ar[r]^f \ar[d]_u \ar@{}[dr]|{\Downarrow\alpha} & \B \ar[d]^v \\ \C \ar[r]_g & \D}\] is \emph{exact} if for any complete and cocomplete category $\C$ (either) mate of 
\[ \xymatrix@R=3pc@C=3pc{ \M^\A \ar@{}[dr]|{\Downarrow\alpha^*}  & \M^\B \ar[l]_{f^*} \\ \M^\C \ar[u]^{u^*} & \M^\D \ar[u]_{v^*} \ar[l]^{g^*}}\] defines an isomorphism $\lan_u f^* \Rightarrow g^*\lan_v$. 
\end{defn}

\begin{cor} If $(\R, \R^+,\R^-)$ is a Reedy category or generalized Reedy category, then the diagram of inclusions of indexing categories 
\[ \xymatrix@R=3pc@C=3pc{ \Iso(\R) \ar@{}[dr]|(.2){\lrcorner} \ar[r] \ar[d] & {\R^-} \ar[d] \\ {\R^+} \ar[r] & \R}\] 
is exact.
\end{cor}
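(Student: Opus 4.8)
The plan is to read exactness off directly from the Beck--Chevalley mate, which for this particular square is exactly the natural isomorphism already established in Lemmas~\ref{lem:reedy-exact} and~\ref{lem:gen-reedy-exact}. First I would name the corners by setting $\A=\Iso(\R)$, $\B=\R^-$, $\C=\R^+$ and $\D=\R$, with $f\colon\A\hookrightarrow\B$, $u\colon\A\hookrightarrow\C$, $v\colon\B\hookrightarrow\D$ and $g\colon\C\hookrightarrow\D$ the four inclusions. Both composites $\A\hookrightarrow\B\hookrightarrow\D$ and $\A\hookrightarrow\C\hookrightarrow\D$ agree with the inclusion $\A\hookrightarrow\D$, so the square commutes strictly and the comparison $2$-cell $\alpha$ of Definition~\ref{def:exact} is an identity. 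For a complete and cocomplete $\M$, restriction then yields a commuting square of functors with $f^*=V=g^*$, where $\lan_u$ and $\lan_v$ are the left Kan extensions $\M^{\Iso(\R)}\to\M^{\R^+}$ and $\M^{\R^-}\to\M^\R$ appearing in the diagrams of the previous two subsections. The mate required by Definition~\ref{def:exact} is thus the canonical comparison $\mathrm{Bc}\colon\lan_u f^*\Rightarrow g^*\lan_v$, that is $LV\Rightarrow VL$, and exactness is precisely the statement that $\mathrm{Bc}$ is invertible.

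Next I would identify $\mathrm{Bc}$ with the isomorphism proved in those lemmas. Evaluating at $X\in\M^{\R^-}$ and $d\in\ob\R^+$ and using the pointwise coend (equivalently, coequalizer) formulas $(\lan_u f^* X)(d)=\int^{x\in\A}\coprod_{\C(x,d)}X(x)$ and $(g^*\lan_v X)(d)=\int^{x\in\B}\coprod_{\D(x,d)}X(x)$, I would check that the component of $\mathrm{Bc}$ is induced by the inclusion $\A\hookrightarrow\B$ on the coend variable together with the inclusions $\C(x,d)\hookrightarrow\D(x,d)$ on the coproducts. This is exactly the comparison map that Lemmas~\ref{lem:reedy-exact} and~\ref{lem:gen-reedy-exact} show to be an isomorphism, invertibility coming from the essentially unique Reedy factorization of each morphism of $\D$ through an object of minimal degree. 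Invoking those lemmas then settles the left-hand mate, and the dual isomorphism $RU\cong UR$ of the same lemmas (or the standard fact that one mate of a square is invertible if and only if the other is) disposes of the remaining case permitted by the word ``either'' in Definition~\ref{def:exact}.

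The only genuinely delicate point, and the step I expect to be the main obstacle, is the verification just described: that the isomorphism exhibited in Lemmas~\ref{lem:reedy-exact} and~\ref{lem:gen-reedy-exact} really is the canonical mate $\mathrm{Bc}$, and not merely some abstract natural isomorphism $LV\cong VL$, since exactness demands invertibility of the specific Beck--Chevalley transformation. I would handle this by tracing $\mathrm{Bc}$ through the unit of $\lan_v\dashv v^*$ and the counit of $\lan_u\dashv u^*$ and matching the result, summand by summand, against the quotient and inclusion maps written down in the proofs of those lemmas. Once the two are seen to coincide on each coend summand, invertibility follows immediately and the corollary is proved.
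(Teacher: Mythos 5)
Your proposal is correct and follows the same route as the paper, whose entire proof is simply to cite Lemmas \ref{lem:reedy-exact} and \ref{lem:gen-reedy-exact}; the extra care you take in checking that the isomorphism $LV\cong VL$ constructed there really is the canonical Beck--Chevalley mate (rather than some unrelated isomorphism) is exactly the point the paper leaves implicit, and your identification of the comparison map with the quotient map built from Reedy factorizations is the right way to close that gap. Nothing further is needed.
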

\begin{proof}
This is the content of Lemmas \ref{lem:reedy-exact} and \ref{lem:gen-reedy-exact}.
\end{proof}
}

\section*{Addendum}
\renewcommand\thesection{A}
\setcounter{thm}{0}
\setcounter{subsection}{0}

We are grateful to Peter May for having suggested the following additional application of the theory of left-induced model structures.

\begin{prop} The adjunction \[ \xymatrix@C=4pc@R=4pc{ \sSet \ar@<1ex>[r]^{|-|} \ar@{}[r]|{\perp} & \cat{Top} \ar@<1ex>[l]^{\mathrm{Sing}} }\] left-induces the Quillen model structure on $\sSet$ from the Quillen model structure and the Hurewicz model structure on $\cat{Top}$.
\end{prop}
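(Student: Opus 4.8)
The plan is to argue directly rather than through Corollary~\ref{cor:acyclicity-reduction}, since the category $\cat{Top}$ of \emph{all} spaces is not locally presentable and so the Acyclicity Theorem does not literally apply. Instead I will use that the Quillen model structure on $\sSet$ is already known to exist and that a model structure is determined by its cofibrations and weak equivalences (the fibrations being forced as $\rlp{(\Cof\cap\WE)}$). By definition, the left-induced model structure along $|-|\colon\sSet\to\cat{Top}$, should it exist, has weak equivalences $|-|^{-1}\WE$ and cofibrations $|-|^{-1}\Cof$; hence it suffices to show that for each of the two model structures on $\cat{Top}$ these two classes coincide with the weak equivalences and cofibrations of the Quillen model structure on $\sSet$. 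Matching both classes then identifies the Quillen model structure as the left-induced one and, in particular, proves the latter exists.

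For the weak equivalences, recall that a map $f$ of simplicial sets is a Quillen weak equivalence precisely when $|f|$ is a weak homotopy equivalence; this is by definition $|-|^{-1}\WE$ for the Quillen model structure on $\cat{Top}$. For the Str\o m model structure, $\WE$ is the class of homotopy equivalences, so $|-|^{-1}\WE$ consists of those $f$ with $|f|$ a homotopy equivalence. Since $|X|$ and $|Y|$ are CW complexes for all simplicial sets $X,Y$, Whitehead's theorem gives that $|f|$ is a homotopy equivalence if and only if it is a weak homotopy equivalence. Thus the two preimage classes agree, and both equal the Quillen weak equivalences on $\sSet$.

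For the cofibrations, recall that the Quillen cofibrations in $\sSet$ are exactly the monomorphisms, so I must show that $f$ is a monomorphism if and only if $|f|$ is a Quillen cofibration, and likewise if and only if $|f|$ is a Str\o m cofibration. If $f$ is a monomorphism, then $|f|$ is the inclusion of a subcomplex, which is simultaneously a relative CW inclusion (a Quillen cofibration) and a closed Hurewicz cofibration (a Str\o m cofibration). Conversely, both Quillen cofibrations and Str\o m cofibrations in $\cat{Top}$ are injective continuous maps: relative cell complexes are closed inclusions and monomorphisms are retract-closed, while Str\o m cofibrations are closed embeddings. It therefore remains to prove the key lemma that $|-|$ \emph{reflects} monomorphisms, i.e.\ that $|f|$ injective implies $f$ a monomorphism. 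This I would deduce from two standard facts: the unit $\eta_X\colon X\to\mathrm{Sing}\,|X|$ is a monomorphism, and $\mathrm{Sing}$, being a right adjoint, preserves monomorphisms. Indeed, if $|f|$ is injective then $\mathrm{Sing}\,|f|$ is a monomorphism, so the composite $\eta_Y\circ f=\mathrm{Sing}\,|f|\circ\eta_X$ is a monomorphism, whence $f$ is one as well.

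Combining these observations, in both the Quillen and the Str\o m cases the classes $|-|^{-1}\Cof$ and $|-|^{-1}\WE$ coincide with the monomorphisms and the Quillen weak equivalences, respectively; by Proposition~\ref{prop:model-via-wfs} these determine the Quillen model structure on $\sSet$, which is therefore the left-induced model structure. I expect the only genuine obstacle to be the reflection-of-monomorphisms lemma, together with the verification that the relevant cofibration classes in $\cat{Top}$ are injective; everything else is a matter of unwinding definitions and invoking Whitehead's theorem.
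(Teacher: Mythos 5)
Your proposal is correct, and its overall architecture is the same as the paper's: identify $|-|^{-1}\WE$ and $|-|^{-1}\Cof$ for each of the two model structures on $\cat{Top}$ with the Quillen weak equivalences and the monomorphisms of $\sSet$, using Whitehead's theorem on CW complexes to handle the Str\o m weak equivalences, and conclude that the left-induced structure exists because it coincides with the known Quillen structure. The one genuine difference is how you establish the key lemma that $|-|$ reflects monomorphisms. The paper argues combinatorially: an injective realization forces $f$ to be injective on non-degenerate simplices and to preserve non-degeneracy, and then the Eilenberg--Zilber decomposition $x = x^{\#}\cdot\epsilon$ shows $f$ is a monomorphism in every degree. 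You instead use the naturality square $\eta_Y\circ f=\mathrm{Sing}|f|\circ\eta_X$ together with the facts that the unit $\eta_X$ is a monomorphism and that $\mathrm{Sing}$, as a right adjoint, preserves monomorphisms. Your route is cleaner at the top level and isolates the combinatorics inside the single standard fact that $\eta_X$ is injective --- but be aware that the usual proof of that fact is essentially the same Eilenberg--Zilber argument the paper writes out, so you have relocated rather than eliminated the combinatorial content. Your ancillary verifications (retracts of relative cell complexes and closed Hurewicz cofibrations are injective; subcomplex inclusions are both relative CW inclusions and closed cofibrations) are all correct and are implicitly used in the paper as well.
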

\begin{proof} Consider first the Quillen model structure on $\cat {Top}$, in which weak equivalences are weak homotopy equivalences, fibrations are Serre fibrations, and cofibrations are retracts of relative cell complexes.
By definition $|-|$ creates the weak homotopy equivalences in $\sSet$. Moreover the geometric realization of a simplicial monomorphisms is clearly a relative cell complex. If $|f|: |X| \lra |Y|$ is a retract of a relative cell complex, then it is a monomorphism and so is injective on interiors of cells, whence $f: X \lra Y$ is injective on non-degenerate simplices, carrying them to non-degenerate simplices. Suppose that $x \cdot \epsilon$ and $x' \cdot \epsilon'$ are degenerate simplices, given in their Eilenberg-Zilber decomposition.  In particular, $x$ and $x'$ are nondegenerate.  Since $f(x) \cdot \epsilon = f(x') \cdot \epsilon'$, and $f(x)$ and $f(x')$ are non-degenerate, the Eilenberg-Zilber lemma implies that  $\epsilon = \epsilon'$ and $f(x)=f(x')$, whence $x = x'$.  It follows that $f$ is a simplicial monomorphism if and only if its geometric realization is the retract of a relative cell complex.

Consider next the Hurewicz model structure on $\cat {Top}$, in which weak equivalences are homotopy equivalences, fibrations are Hurewicz fibrations, and cofibrations are closed cofibrations. It suffices to observe that $|X| \lra |Y|$ is a weak homotopy equivalence if and only if it is a homotopy equivalence. The argument in the previous paragraph implies that if $|f| \colon |X| \lra |Y|$ is a closed cofibration, and therefore, in particular, a monomorphism, then $f$ is a monomorphism.
\end{proof}

\end{document}